\newcommand\rt{{\bullet\hskip-4pt=}}
\newcommand\rst{\sqcup}
\newcommand{\D}{\partial}
\newcommand{\Q}{\mathbb{Q}}
\newcommand{\C}{\mathbb{C}}
\newcommand{\R}{\mathbb{R}}
\newcommand{\A}{\mathcal{A}}
\newcommand{\B}{\mathcal{B}}
\newcommand{\Li} {{\mathbb L}{\rm i}}
\newcommand{\id}{\mathbb{I}}
\newcommand{\sha}{{\,\amalg\hskip -3.6pt\amalg\,}}
\newcommand{\sgn}{\textrm{sign}}
\def\ba #1\ea{\begin{align} #1 \end{align}}
\def\bas #1\eas{\begin{align*} #1 \end{align*}}
\newcommand{\mcp}{\mathcal{P}_\bullet^{(\star)}(R)}
\newcommand{\mcT}{\mathcal{T}}
\newcommand{\T}{\mathcal{T}^\bullet(R)}
\newcommand{\Tsub}[1]{\mathfrak{T}_{#1}}
\newcommand{\ofv}{\sigma fv}
\newcommand{\bP}{\sigma P}
\newcommand{\st}{\textrm{ s.t. }}
\newcommand{\Hom}{\textrm{Hom}}
\def \fourgon#1#2#3#4{{
\xy
\POS(10,4) \ar@{=} +(-10,0)_#4
\ar@{-} +(0,-10)^#3
\POS(10,-6) \ar@{-} +(-10,0)^#2
\POS(0,4) \ar@{-} +(0,-10)_#1
\POS(0,4) *+{\bullet}
\endxy
}}
\def \fourgondotted#1#2#3#4{{
\xy
\POS(10,4) \ar@{=} +(-10,0)_#4
\ar@{-} +(0,-10)^#3
\POS(10,-6) \ar@{-} +(-10,0)^#2
\POS(0,4) \ar@{.} +(0,-10)_#1
\endxy
}}
\def \fourgona#1#2#3#4{{
\xy
\POS(10,4) \ar@{=} +(-10,0)_#4
\ar@{-} +(0,-10)^#3
\POS(10,-6) \ar@{-} +(-10,0)^#2
\POS(0,4) \ar@{-} +(0,-10)_#1
\POS(0,4) *+{\bullet}
\POS(10,4) \ar@{->} +(-9,-4)
\endxy
}}
\def \fourgonb#1#2#3#4{{
\xy
\POS(10,4) \ar@{=} +(-10,0)_#4
\ar@{-} +(0,-10)^#3
\POS(10,-6) \ar@{-} +(-10,0)^#2
\POS(0,4) \ar@{-} +(0,-10)_#1
\POS(0,4) *+{\bullet}
\POS(10,4) \ar@{->} +(-4,-9)
\endxy
}}
\def \fourgonab#1#2#3#4{{
\xy
\POS(10,4) \ar@{=} +(-10,0)_#4
\ar@{-} +(0,-10)^#3
\POS(10,-6) \ar@{-} +(-10,0)^#2
\POS(0,4) \ar@{-} +(0,-10)_#1
\POS(0,4) *+{\bullet}
\POS(10,4) \ar@{->} +(-9,-4)
\POS(10,4) \ar@{->} +(-4,-9)
\endxy
}}
\def \threegon#1#2#3{{
\xy
\POS(10,4) \ar@{=} +(-10,0)_#3
\ar@{-} +(-5,-8)^#2
\POS(0,4) \ar@{-} +(5,-8)_#1
\POS(0,4) *+{\bullet}
\endxy
}}
\def \threegonarrowc#1#2#3{{
\xy
\POS(10,4) \ar@{=} +(-10,0)_#3
\ar@{-} +(-5,-8)^#2
\POS(0,4) \ar@{-} +(5,-8)_#1
\POS(5,4) \ar@{<-} +(0,-8)
\POS(0,4) *+{\bullet}
\endxy
}}
\def \threegonarrowb#1#2#3{{
\xy
\POS(10,4) \ar@{=} +(-10,0)_#3
\ar@{-} +(-5,-8)^#2
\POS(0,4) \ar@{-} +(5,-8)_#1
\POS(7.5,0) \ar@{<-} +(-7.5,4)
\POS(0,4) *+{\bullet}
\endxy
}}
\def \threegonarrowa#1#2#3{{
\xy
\POS(10,4) \ar@{=} +(-10,0)_#3
\ar@{-} +(-5,-8)^#2
\POS(0,4) \ar@{-} +(5,-8)_#1
\POS(2.5,0) \ar@{<-} +(7.5,4)
\POS(0,4) *+{\bullet}
\endxy
}}
\def \twogon#1#2{\hskip 5pt\xy
\POS(10,0) \ar@{=} +(-10,0)_#2
\POS(0,0)*{};
\POS(0,0) *+{\bullet}
\POS(10,0)*{};
**\crv{(5,-5)};
\POS(5,-5) *+{\scriptstyle #1}
\endxy \hskip 5pt}
\def \twogonright#1#2{\hskip 5pt\xy
\POS(10,0) \ar@{=} +(-10,0)_#2
\POS(0,0)*{};
\POS(10,0) *+{\bullet}
\POS(10,0)*{};
**\crv{(5,-5)};
\POS(5,-5) *+{\scriptstyle #1}
\endxy \hskip 5pt}
\newtheorem{thm}{Theorem}
\newtheorem{lem}[thm]{Lemma}
\newtheorem{cor}[thm]{Corollary}
\newtheorem{prop}[thm]{Proposition}
\theoremstyle{remark}
\newtheorem{eg}[thm]{Example}
\theoremstyle{definition}
\newtheorem{dfn}[thm]{Definition}
\newtheorem{rem}[thm]{Remark}
\begin{document}
\author{Susama Agarwala}
\address{University of Hamburg \\
Mathematics Department \\
Bundestra{\ss}e, 55 \\
20146, Hamburg \\
Germany }

\title{Dihedral symmetries of multiple polylogarithms}
\maketitle 

\begin{abstract} 
This paper finds relationships between multiple polylogarithms with a
dihedral group action on the arguments. I generalize the combinatorics
developed in Gangl, Goncharov and Levin's $R$-deco polygon
representation of multiple polylogarithms to find these
relations. These relations between $R$-deco polygons, and between
$R$-deco polygons and iterated integrals, can only be defined up to a
primitive co-ideal.

\end{abstract}

\subjclass[2010]{05C05, 14C25, 57T30, 57T05}

\tableofcontents

This paper studies the relationships between multiple polylogarithms
that differ by a dihedral permutation on their arguments. Goncharov
has shown \cite{Gonch98} dihedral relations between multiple zeta values,
and has conjectured such a relation on the level of multiple
polylogarithms. To study the dihedral symmetries of multiple
polylogarithms, I use a technique developed by Gangl, Goncharov and
Levin \cite{GGL05}, that represents multiple polylogarithms as
decorated rooted oriented polygons, $R$-deco polygons. Goncharov
\cite{Gonch01} has shown a Hopf algebra structure underlying multiple
polylogarithms. The authors of \cite{GGL05} give a Hopf algebra
structure to the $R$-deco polygons and construct a coalgebra
homomorphism between their Hopf algebra of polygons and the Hopf
algebra of multiple polylogarithms. The $R$-deco polygon structure of
multiple polygons has recently become interesting objects in
physics. These polygons are used to define symbols of multiple
polylogarithms \cite{DGR11}. These symbols and their Hopf algebraic
structure have important applications in understanding amplitudes in super Yang-Mills theories in 4 dimensions \cite{Duhr12,
  GSVV10}.

Multiple polylogarithms are interesting number theoretical objects in
their own right. As a nested sum they are written \bas \Li_{n_1,
  \ldots ,n_r} (x_1, \ldots x_r) = \sum_{0 < k_1 < \ldots < k_r}
\frac{x_1^{k_1}\ldots x_r^{k_r}}{k_1^{n_1}\ldots k_r^{n_r}} \;.\eas
The multiple polylogarithm, evaluated at $x_i = 1$ gives the multiple
zeta value $\zeta(n_1, \ldots n_r)$. The weight of the multiple
polylogarithm is $w= \sum_{i=1}^rn_i$ and its depth is $r$. In
\cite{Gonch01}, this multiple sum is written in terms of Chen's
iterated integrals. Thus multiple polylogarithms inherit the bar
complex defined on iterated integrals \cite{chen}. There is a motivic
generalization of Chen's iterated integrals. Bloch and Kriz
\cite{BlochKriz} define a Hopf algebra of algebraic cycles,
$\chi_{Mot}$, over a field $F$ formed by taking the $0^{th}$
cohomology of a bar complex based on a differential graded algebra
(DGA) associated to the cycles. In \cite{GGL05}, the authors determine
that there are elements of $\chi_{Mot}$ that correspond to multiple
polylogarithms.

Iterated integrals also make their appearance in physics. Goncharov
\cite{Gonch05} shows a relationship between the Hopf algebra of
multiple polylogarithms, via iterated integrals, and the Hopf algebra
of rooted trees developed by Connes and Kreimer for renormalizing
Feynman integrals \cite{CK98}. Brown \cite{Brown08} develops a method
for evaluating Feynman integrals, under Schwinger parametrization, in
terms of iterated integrals, though the arguments for these iterated
integrals are more complicated than those for multiple polylogs. In
short, there is a lot of activity suggesting a close tie between
multiple polylogarithms and the calculations that appear in various
quantum field theories.

This paper studies multiple polylogarithms from a combinatorial point
of view, primarily on the level of $R$-deco polygons. Let $D_{2r}$ be
the dihedral group on $r$ elements, \bas D_{2r} = \langle \sigma, \tau
| \tau^2 = \sigma^r = 1, \sigma \tau = \tau \sigma^{-1} \rangle
\;.\eas In this paper, I study the relationship between the multiple
polylogarithms \bas \Li_{n_1, \ldots ,n_r}(x_1, \ldots, x_r) \quad
\text{and} \quad \Li_{g(n_1, \ldots ,n_r)}(g (x_1, \ldots, x_r)) \eas
for any $g \in D_{2r}$. Section one of this paper generalizes the
algorithm defined in \cite{GGL05} for assigning decorated trees to
multiple polylogarithms. There the authors associate to each multiple
polylogarithm an $R$-deco polygon, and a family of dissections to each
polygon. The differential structure of the iterated integral
associated to each multiple polylogarithm defines a dual tree
structure to the dissections, and a Hopf algebra structure to the $R$-
deco polygons. Each polygon is mapped to an element of the bar complex
on the algebra of $R$-deco polygons, $\mcp$. I introduce a
generalization of a rooted tree, called a multi-rooted tree. I show
that if the dual trees associated to dissections generate a Hopf
algebra, the trees dual to single dissections define a differential on
the $\mcp$. Combinatorial relationships between these different tree
structures allow me to compare multiple polylogarithms of weight $r$
under the action of the dihedral group $D_{2r}$. Section 2 of this
paper calculates the action of $\tau$ and $\sigma$ in $D_{2r}$ on
$\mcp$. Contrary to a conjecture by Gangl and Levin, I find that the
difference between the bar element associated to an $R$-deco polygon
and its image under the action of the dihedral group cannot completely
be expressed as a sum of products of bar elements associated to
subpolygons. On the level of $R$-deco polygons, this result holds up
to a primitive coideal.


\section*{Acknowledgements}

The author would like to thank Herbert Gangl for significant guidance
and inspiration during the writing of this paper.

\section{The various Hopf algebras of $R$-deco polygons \label{Hopfalgssect}}

This paper studies multiple polylogarithms by studying the iterated
integral associated to them. Let $w = \sum_i^n k_i$ be the weight of
the iterated integral \begin{multline*} I_{k_1, \ldots,
    k_n}(\gamma)(0; x_1 \ldots x_n; x_{n+1}) =
  \\ \int_\gamma \frac{dt_1}{t_1-x_1} \wedge
  \bigwedge_{i=1}^{k_1-1}\frac{dt_{1_i}}{t_{1_i}} \ldots \wedge
  \frac{dt_n}{t_n-x_n}\bigwedge_{i=1}^{k_n-1}\frac{dt_{n_i}}{t_{n_i}}
  \end{multline*} with $\gamma$ a path in $\C$ such that $\gamma(0) = 0$,
  $\gamma(1) = x_{n+1}$. The value of this integral depends on the
homotopy class of $\gamma$ \cite{chen}. If $\gamma$ is a straight path
in the real line, $\gamma(t) \in \R$, from $x_i$ to $x_{i+1} \in \R$ I
drop the notation $\gamma$.  These iterated integrals can be related
to multiple logarithms \bas (-1)^nI_{\underbrace{1, \ldots, 1}_{n
    \textrm{ times}}}(0; x_1, \ldots ,x_n; x_{n+1}) =
\Li_{\underbrace{1, \ldots ,1}_{n \textrm{ times}}} (\frac{x_2}{x_1},
\ldots ,\frac{x_n}{x_{n-1}} , \frac{x_{n+1}}{x_n})\;. \eas These
iterated integrals live in the $0^{th}$ cohomology of the associated
bar complex defined by Chen. The general class of iterated integrals,
$I_\bullet(R)$ have a Hopf algebra structure, as shown by Goncharov in
\cite{Gonch98}. The space of iterated integrals can be endowed with an
algebra structure, under path composition. In \cite{Gonch05}, the
author further show that these iterated integrals have a motivic
counterpart, $I^\mathcal{M}(0, x_1, \ldots , x_{n+1})$ with $x_i \in
F$, for a field $F$, that is an element of the fundamental motivic
Hopf algebra over $F$. The natural differential on the iterated
integrals defines a bar complex on algebra of iterated
integrals. Those iterated integrals corresponding to multiple
polylogarithms live in the $0^\textrm{th}$ cocycle of this bar
complex.

In \cite{GGL05}, Gangl, Goncharov and Levin associate to each multiple
polylogarithm an $R$-deco polygon. For instance, the integral \bas
I_{\underbrace{1, \ldots, 1}_{n \textrm{ times}}}(0; x_1, \ldots ,x_n;
x_{n+1}) \eas is associated to an oriented $n+1$-gon with sides
labeled from $x_1$ to $x_{n+1}$. Note that in this case, $x_i \neq 0$
by construction. They define a differential on the algebra of $R$-deco
polygons, that mimics the differential on iterated integrals. This
defines a bar complex on $R$-deco polygons. The authors of
loc. cit. associate a bar element to each polygon. They define a
family of dual trees to the $R$-deco polygons which induce a Hopf
algebra structure on the algebra of $R$-deco polygons. This translates
to a Hopf algebra sitting in the the $0^{\textrm{th}}$ cocycle of the
bar complex of $R$-deco polygons.  In this paper, I call this
$B_{\phi_2}$. This structure on the $R$-deco polygons is compatible
with the parallel structure on iterated integrals in that there is a a
coalgebra homomorphism from this to $I(R)$, \ba \Phi:
\Lambda(B_{\phi_2}) & \rightarrow I(R) \; \label{GGLmap}.\ea
Specifically, if $0 \not \in R$, and the polygon $P$ has sides labeled
$\{x_1, \ldots, x_{n+1}\}$ then \bas \Phi(\Tsub{\phi_2}(P)) =
I_{\underbrace{1, \ldots, 1}_{n \textrm{ times}}}(0; x_1, \ldots x_n ;
x_{n+1}) \;.\eas Relating dihedral symmetries of multiple
polylogarithms can be simplified to a combinatorial problem on the
dihedral symmetries of decorated polygons.

This section generalizes the above construction. I
define a class of Hopf algebras associated to these decorated polygons
that are useful in solving the combinatorics of how polylogarithms
vary as their order of the arguments are changed. Subsection
\ref{randbar} gives a definition of $R$-deco polygons and their
dissections, the vector space they generate, $V_\bullet(R)$, its
exterior product algebra $\mcp$, and the associated bar
complexes. Subsection \ref{treesection} defines the Hopf algebra of
multi-rooted trees, $\T$ , and the linearization map $\Lambda$. I show
that $\Lambda$ is a bialgebra homomorphism from $\T$ to the
commutative Hopf algebra of words on $R$-deco polygons. Subsection
\ref{Hopftrees} introduces a family of dissection compatible Hopf
algebras of $\T$ generated by sets associated to dissections of
$R$-deco polygons. Subsection \ref{hopftodif}, shows that these
generating sets define differentials on $\mcp$. It also introduces a
family of almost compatible algebra. Finally, subsection \ref{props}
defines a relationship between the linearizations of dissection
compatible Hopf algebras and almost compatible algebras that is useful
for the calculations in section \ref{permutesection}. Furthermore, I
show that the linearization of the latter also gives rise a Hopf
algebra.

\subsection{Bar constructions on $R$-deco polygons\label{randbar}}
Let $R$ be a set. Usually, $R$ is the set underlying a field.

\begin{dfn}
Let $P_n$ be the convex oriented polygon with $n+1\geq 2 $ sides, with
sides labeled by elements in $R$. One of those sides is a
distinguished side, called a root side. One of the endpoints of the
root side is marked as the first vertex. Orient $P_n$ by starting at
the first vertex and ending at the root side. The polygon $P_n$ is an
$R$-deco polygon, as defined in \cite{GGL05}.
\end{dfn}

In this paper, I draw polygons to be oriented
counterclockwise. I sometimes specify a polygon in terms of its
labels, proceeding counterclockwise and ending with the root
side. Therefore,

\xy
\hskip 150pt \hskip 10pt
\POS(0,4) *+{\bullet}
\POS(10,4) \ar@{=} +(-10,0)_4
\ar@{-} +(0,-10)^3
\POS(10,-6) \ar@{-} +(-10,0)^2
\POS(0,4) \ar@{-} +(0,-10)_1
\hskip 40pt
= $1234$
\endxy

The $R$-deco polygons generate a vector space.

\begin{dfn}
  Let $V_{\bullet}(R)$ be the graded vector space over $\Q$ generated
  by $R$-deco polygons. Let $V_n(R)$ be the vector space over $\Q$
  generated by $R$ deco $n+1$-gons, with $n>0$ and $V_0(R)$ identified with
  $\Q$  \bas V_\bullet(R) =
  \Q<\{1, P | P \textrm{ is a } R\textrm{-deco polygon} \}> =
  \oplus_{n=0}^\infty V_n(R)\quad ; \quad V_0(R) = \Q .\eas
\end{dfn}

The \emph{weight} of an element in $V_n(R)$ is $n$.

An $R$-deco polygon can be equipped with arrows, as in
\cite{GGL05}. An arrow of a polygon is drawn from a vertex of a
polygon to a side of a polygon. It divides the interior of the polygon
into two regions.  A trivial arrow of a polygon $P$ goes from a vertex
to an adjacent side. A non-trivial arrow of $P$ is an arrow that does
not end on a side adjacent to its starting vertex. Two arrows are said
to be non-intersecting if they share no points in common other than
possibly the starting vertex. 

\begin{dfn}
Arrows of $P_n$ are defined by their starting vertex and ending
edge. Write ${_i}\alpha_j$
for an dissecting arrow of the polygons $P$ that starts at the
$i^{th}$ vertex and ends on the $j^{th}$ edge. For non-trivial arrows, $j\neq i, i-1 \mod (n+1)$. Call
$_i\alpha_j$ a backwards arrow if $j <i$. Otherwise it is a forwards
arrow. \label{arrowdef}
\end{dfn}

\begin{eg}
The arrow $_1\alpha_4$ is a trivial arrow in the first polygon
below. In the second polygon, $_2\alpha_4$ and $_4\alpha_2$ are
non-intersecting, non-trivial arrows.

\begin{center}
\xy
\hskip 120pt 
\POS(0,4) *+{\bullet}
\POS(12,4) \ar@{=} +(-12,0)_4
\ar@{-} +(0,-12)^3
\POS(12,-8) \ar@{-} +(-12,0)^2
\POS(0,4) \ar@{-} +(0,-12)_1
{\ar_{_1\alpha_4}@{->}@/_/ (0,4); (7,3)}
\hskip 70pt

\POS(0,4) *+{\bullet}
\POS(12,4) \ar@{=} +(-12,0)_4
\ar@{-} +(0,-12)^3
\POS(12,-8) \ar@{-} +(-12,0)^2
\POS(0,4) \ar@{-} +(0,-12)_1
\POS(0,-8) \ar^>>>>{{_2\alpha_4}}@{->} +(8,11)
\POS(12,4) \ar^>>>>{_4\alpha_2}@{->} +(-8,-11)

\endxy

\end{center}

\end{eg}

Regions associated to dissection arrows can be viewed as polygons in
their own right. If $\alpha$, is an arrow of $P$, contracting $\alpha$
to a point results in a set of two polygons $\{P_\alpha, Q_\alpha\}$
associated to the two regions of $P$ as
follows. \begin{enumerate} \item The labels of the sides and the
  orientations of $P_\alpha$ and $Q_\alpha$ are inherited from
  $P$.  \item If $\alpha$ lands on a non-root side of $P$, then the
  subpolygon corresponding to the region that contains the root side
  of the original polygon inherits the root, and the side that
  $\alpha$ lands on becomes the new root for the other
  subpolygon. \item If $\alpha$ ends on the root side of $P$, then
  both subpolygons inherit the original root side as their root. See
  example \ref{dissecteg}. \end{enumerate} Under this construction,
dissection preserves the polygon weight. If $\alpha$ is a trivial
arrow, then one of the resulting subpolygons is the trivial
subpolygon, $1 \in V_0(R) = \Q$ while the other subpolygon is the
original polygon. For this reason, for most of the discussion in this
paper, we ignore the presence of trivial arrows.

\begin{dfn}
A dissection $d$ of $P$ is a set of non-intersecting
arrows of $P$. Denote $D(P)$ as the set of dissections of the polygon
$P$, including the trivial dissection (no arrows).  The cardinality of a
dissection, $|d|$ is the number of non-trivial arrows in
$d$. \end{dfn}

\begin{eg}\label{dissecteg}
For example, consider $P = 123456$, and dissection $d = \{\alpha,
\beta, \gamma \}$ as drawn. Contracting along the arrows in $d$ gives the a set
of $4$ polygons.

\bas \xy
\POS(0,4) *+{\bullet}
\POS(10,4) \ar@{=} +(-10,0)_6
\ar@{-} +(6,-6)^5
\POS(16,-2)\ar@{-} +(-6,-6)^4
\POS(0,-8)\ar@{-} +(10,0)_3
\ar@{-} +(-6,6)^2
\POS(0,4)\ar@{-} +(-6,-6)_1
\POS(-6,-2) \ar^{\alpha}@{->} +(14,6)
\POS(10,4) \ar^{\beta}@{->} +(-12,-10)
\POS(10,-8) \ar^{\gamma}@{->} +(4,8)
\endxy \eas

The arrow $\beta$ is an arrow not ending on the root side of
$P$. Contracting along $\beta$ gives the pair of polygons

\bas \{ P_\beta  =\threegon{1}{2}{6},  Q_\beta = \fourgon{3}{4}{5}{2} \} \;.\eas 

One can think of $\alpha$ now as an arrow in $P_\beta = 126$. It ends
on the root side of $P$. Contracting along both arrow $\alpha$ and
$\beta$ gives the three polygons

\bas 
\{\twogon{1}{6}, \twogon{2}{6}, \fourgon{3}{4}{5}{2} \} \;. 
\eas

Finally, consider $\gamma$ as an arrow in $Q_\beta = 3452$. It does
not end on the root side of either $Q_\beta$ or $P$. Contracting all arrows in $d$ gives 

\bas \{\twogon{1}{6}, \twogon{2}{6}, \threegon{3}{5}{2}, \twogon{4}{5} \} \;. 
\eas

The order of contracting arrows in $d$ does not affect the
set of polygons associated to it.
\end{eg}

The polygons $P_\alpha$ and $Q_\alpha$ above are called the polygons
associated to the dissecting arrow $\alpha$. If $d \in D(P)$ is a
dissection with $i$ arrows, there is a set of $i+1$ subpolygons,
$\{P_0, \ldots P_i\}$ associated to the dissection $d$, formed by
contracting the arrows in $d$. Since dissection preserves weight, if each
$P_j \in V_{n_j}(R)$, and $P \in V_n(R)$, then $\sum_{j=0}^i n_j =
n$. Two polygons $P_i$ and $P_j$ associated to a dissection are
\emph{adjacent} if regions they correspond to share a dissecting arrow
as a boundary.

When discussing polygons associated to dissections, it is useful to
label the regions associated to dissection consisting of a single
arrow. For $\alpha = d \in D(P)$, the subpolygons associated to $\alpha$
are sometimes referred to as the \emph{root polygon}, $P_\alpha^\rt$,
which is the subpolygon that contains the root side and first vertex
of $P$, and the \emph{cut off polygon}, $P_\alpha^\rst$, which is the
other subpolygon. At other times, it is convenient to consider whether
the subpolygon lies to the left or the right of the arrow, as
determined by the orientation of the arrow. In this
case, the \emph{left polygon} is indicated $P_\alpha^l$ and the
\emph{right polygon} is indicated $P_\alpha^r$. Notice that if
$\alpha$ is a forwards arrow, $P_\alpha^l = P_\alpha^\rt$. If it is a
backwards arrow, $P_\alpha^l = P_\alpha^\rst$. In example \ref{dissecteg},
since $\beta$ is a backwards arrow, \bas P_\beta^l = P_\beta^\rst=
 \fourgon{3}{4}{5}{2} \; , \quad P_\beta^r =
P_\beta^\rt = \threegon{1}{2}{6} \;. \eas

\begin{dfn}
Let $\mcp$ be the exterior product algebra of
$V_\bullet(R)$. It is bigraded, the subscript $\bullet$ corresponds to
the weight, or Adams grading, of the vector space $V_\bullet(R)$, and
the superscript $(\star)$ corresponds to the exterior product grading,
also referred to as the degree.
\end{dfn}

The algebra $\mcp$ can be endowed with a degree 1
differential operator to form a DGA $(\mcp,
\D)$. There are several such operators on this algebra, which I
discuss in section \ref{hopftodif}.  I consider the bar constructions
associated to each DGAs, $B_\D(\mcp)$.

\begin{dfn}
Let $(\A, \D)$ be a DGA with $\A$ a connected graded exterior product
algebra, and $\D$ a degree 1 differential operator. The bar
construction $B_\D(\A)$ associated to $(\A_\bullet, \D)$ is the
reduced tensor algebra $\bar{T}(\A_\bullet) = \oplus_{i=0}^\infty
\A_{\geq 1}^{|i}$, commutative under the shuffle product, $\sha$,
  with tensor symbol denoted by $|$. The bicomplex structure of
  $B_\D(\A)$ is given by the differential operators $D_1$ and
  $D_2$. \end{dfn}

The coproduct on $B_\D(\A)$ is induced from the deconcatenation
coproduct on $\bar{T}(\A)$ \ba \Delta [a_1| \ldots | a_n]
= \sum_{i = 0}^n [a_1| \ldots | a_i] \otimes [a_{i+1}|\ldots |a_n] \;
.\label{barcoprod} \ea It is compatible with the shuffle product on
$T(\A)$.

In this paper, I consider $\A_\bullet = \mcp$. Given a
differential operator $\D$, the bar construction
$B_\D(\mcp)$ is generated by terms of the form
$ [a_1| \ldots | a_n] $ where each $a_i \in
\mathcal{P}_\bullet^{(k_i)}(R)$ is homogeneous in the exterior product
grading of degree $k_i$.

\begin{enumerate}
\item Define $D_1 : \; \mcp^{|n} \rightarrow \mcp^{|n-1}$ be the operator
  defined \bas D_1([a_1| \ldots | a_n]) = \sum_{i=1}^{n-1}
 - (-1)^{\sum_{j\leq i}(\textrm{deg }a_j -1)} [a_1| \ldots| a_i \wedge a_{i+1}|
    \ldots | a_n] \; .\eas

\item Define $D_2: \; \mcp^{|n} \rightarrow \mcp^{|n}$ be the operator defined
  \bas D_2([a_1| \ldots | a_{n}]) = \sum_{j=1}^n
  (-1)^{\sum_{k < j}(\textrm{deg }a_k - 1)} [a_1| \ldots| \D a_j| \ldots
    a_n] \; . \eas
\end{enumerate}

Since $D_1$ does not involve the differential defining the DGA, this
differential is the same for all $B_\D(\mcp)$. If $\D$ and $\D'$ are
different differential operators on $\mcp$, the
differential $D_2$ is different on $B_{\D}(\mcp)$ and
$B_{\D'}(\mcp)$.

\begin{rem} 
The bar construction defined in this paper is different that the one
defined in \cite{GGL05}, specifically they differ by the overall sign
of $D_1$. The objects in this paper have different weights than those considered in \cite{LodayVallette}, Chapter 2, section 2.2, otherwise, the construction in this paper and Loday and Valette agrees. It is worth noting that this bar
construction also differs from that of Bloch and Kriz
\cite{BlochKriz}, where the shuffle product and coproduct have a very
different sign convention.

\end{rem}

\subsection{Multi-rooted trees\label{treesection}}
In this paper, I define several Hopf algebras associated to
the vector space of $R$ deco polygons $V_\bullet(R)$. These are
defined by introducing dual tree structures to polygons and their
dissections. In this subsection, I define these trees.

\begin{dfn}
A tree is a finite contractible graph with oriented edges. Vertexes
with all edges flowing away from them are called roots. Vertexes with
all edges flowing into it are called leaves. A tree may have many
roots, in which case is called a multi-rooted tree. If a tree has a
single vertex, that vertex is both a root and a leaf.
\end{dfn}

Unlike for single rooted trees, leaves on multi-rooted trees can have
multiple edges coming into them.

\begin{rem}
In this paper, root vertexes are marked by a circle. I do not
explicitly indicate the orientation of the edges, and
leave it to be assumed from the pictures. Generally, root vertexes are
drawn at the top of the tree, while the edges flow down.
\end{rem}

Let $\T$ be the augmented bialgebra over $\Q$ of multi-rooted
non-planar trees with vertexes decorated by $R$-deco polygons.  As
with trees, a multi-rooted tree $T \in \T$ induces a \emph{partial
  order} on its vertexes. A \emph{path} in $T$ from the vertex $v_1$
in $T$ to $v_2$ in $T$, is a linear subtree with $v_1$ as a root and
$v_2$ as leaf vertex, with orientation inherited from $T$. If $v_1$
and $v_2$ are two vertexes of a tree $T$, \bas v_1 \prec v_2 \textrm{
  in }T \iff \exists \textrm{ a path in }T \textrm{ from } v_1
\textrm{ to }v_2\;.\eas A \emph{linear order} of $T$ is a total
ordering of the vertexes of $T$ that respects the partial order.

The algebra structure of $\T$ is given as follows. It is graded by
number of vertexes in the tree \bas \T = \bigoplus_{n=0}^{\infty}
\mcT^n(R) = \Q\langle T | T \text{ has $n$ vertexes } \rangle \quad ;
\quad \mcT^0(R) = \Q \; . \eas The unit is the empty tree, \bas
\id_{\T} = T_\emptyset \;.\eas 

The sum of two trees $T_1$ and $T_2$ is formal.

The algebra $\T$ is a commutative algebra with the product
of trees being the disjoint union of trees, or a forest.

\begin{dfn}
For a tree $T \in \T$, let $c$ be a non-empty subset of edges of $T$, and $
\{ t_1, \ldots t_k\}$ be the set of trees formed by removing the edges in
$c$. The subset $c$ is a \emph{proper admissible cut} of $T$ if, for any
individual $t_i$, the edges of $c$ that have endpoints in $t_i$ either all
flow into $t_i$ or all flow from $t_i$. 
\end{dfn}

\begin{eg}
For example for the tree $T= 
\xy 
\POS(0,4) *+{\bullet} *\cir{}
\POS(3,4) *+{A} 
\POS(0,4) \ar@{-}_\alpha + (-4, -8) 
\POS(-4, -4) *+{\bullet} 
\POS(-7, -4) *+{B} 
\POS(0,4) \ar@{-}_\beta + (4, -8)
\POS(4, -4) *+{\bullet} 
\POS(8,4) \ar@{-}^\gamma + (-4, -8) 
\POS(7, -4) *+{C}
\POS(8, 4) *+{\bullet} *\cir{} 
\POS(11,4) *+{D} \endxy$ the set
$\{\alpha,\gamma\}$ is not a proper admissible cut, but the set $\{
\alpha,\beta\}$ is.
\end{eg}

Let $c$ be a proper admissible cut of $\T$, and $\{t_1,\ldots, t_k\}$ the
set of subtrees of $T$ formed by removing the edges in $c$ from
$T$. The definition of a proper admissible cut partitions the trees
$\{t_1,\ldots, t_k\}$ by whether they are connected to the edges in $c$
by terminal vertexes, or initial vertexes. This partitions the set of
subtrees in two, the set $\{t_{l_1}, \ldots, t_{l_n}\}$ of subtrees
that elements of $c$ have at most a terminal vertex in $t_{l_i}$, and
the set $\{t_{r_1}, \ldots, t_{r_m}\}$ of subtrees such that elements of $c$
have at most an initial vertex in $t_{r_i}$.

\begin{dfn}
\begin{itemize} 
\item The leaf forest of a proper admissible cut is \bas L(c)= \prod_{i=
  1}^n t_{l_i} \;. \eas
\item The root forest is \bas R(c)= \prod_{i= 1}^m t_{r_i}
  \;. \eas \end{itemize}
\end{dfn}

In the above example, for $c = \{\alpha, \beta\}$, the pruned forest
is \bas L(c) = \xy \POS(-4, -4) *+{\bullet} *\cir{} \POS(-8, -4) *+{B}
\POS(4, -4) *+{\bullet} \POS(4,-4) \ar@{-}_\gamma + (0, 8) \POS(7, -4)
*+{C} \POS(4, 4) *+{\bullet} *\cir{} \POS(7,4) *+{D} \endxy \eas and
the root forest is \bas R(c) = \xy \POS(0,4) *+{\bullet} *\cir{}
\POS(3,4) *+{A} \endxy \eas

In addition to proper admissible cuts, one considers two other
cuts. The empty cut is defined such that $L_{empty}(T) = 1$ and
$R_{empty}(T) = T$. The full cut is defined such that $L_{full}(T) =
T$ and $R_{full}(T) = 1$. The set of admissible cuts consists of
proper admissible cuts, the empty cut and the full cut.

\begin{dfn}
The coproduct on $\T$ is defined \ba \Delta (T) = \sum_{c \textrm{
    admis.}}R(c) \otimes L(c) \;.\label{treecoprod} \ea I denote the
contribution of the admissible cut $c$ to the coproduct as \bas
\Delta_c(T) = R(c) \otimes L(c)\;. \eas In this notation $\Delta (T) =
\sum_{c \textrm{ admis.}} \Delta_c(T)$.
\end{dfn}

Recall that in a coassociative bialgebra $\T$, for every $T \in \T$,
\ba (\Delta \otimes \id) \Delta (T) = (\id \otimes \Delta) \Delta
(T). \label{coassoc} \ea

\begin{lem}
The algebra $\T$ is a coassociative Hopf
algebra.\label{coassoclem}\end{lem}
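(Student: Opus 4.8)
The plan is to verify the two axioms separately: coassociativity of the coproduct $\Delta$ defined in \eqref{treecoprod}, and then the existence of an antipode, which (since $\T$ is a graded connected bialgebra once coassociativity and counit compatibility are in place) follows formally. The counit $\varepsilon: \T \to \Q$ is the projection onto $\mathfrak{T}^0(R) = \Q$, and checking $(\varepsilon \otimes \id)\Delta = \id = (\id \otimes \varepsilon)\Delta$ amounts to observing that the only admissible cut contributing a $\mathfrak{T}^0$ factor on the left (resp.\ right) is the empty cut (resp.\ the cut of all edges), which returns $T$ on the other side. Compatibility of $\Delta$ with the disjoint-union product is immediate because an admissible cut of a forest $T_1 \sqcup T_2$ is exactly a pair of admissible cuts, one of each component, and $R$ and $L$ distribute over $\sqcup$.

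The main work is coassociativity: I must show $(\Delta \otimes \id)\Delta T = (\id \otimes \Delta)\Delta T$ for every tree $T$. The standard approach, going back to Connes--Kreimer, is to identify both sides with a sum over \emph{pairs} of nested compatible admissible cuts. Concretely, $(\id \otimes \Delta)\Delta T = \sum_{c} R(c) \otimes \Delta(L(c)) = \sum_{c} \sum_{c'} R(c) \otimes R'(c') \otimes L'(c')$, where $c'$ ranges over admissible cuts of the pruned forest $L(c)$; similarly the other side iterates the cut on the root forest $R(c)$. The key combinatorial lemma is that an admissible cut of $L(c)$ together with $c$ is the same data as an admissible cut $c''$ of $T$ together with an admissible cut of $R(c'')$, the bijection being ``take the union of the edge sets.'' One has to check that the union of $c$ with an admissible cut of $L(c)$ is again admissible in $T$ — this is where the ``all-in or all-out'' condition on each subtree is used: the condition is local to each resulting component, and refining a cut on one side cannot create a component of $T$ with mixed inflow/outflow that wasn't already mixed. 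The bookkeeping of which of the three tensor factors each resulting subtree lands in is then forced by the partition into ``at most a final point'' versus ``at most a starting point'' subtrees, and matches on both sides.

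For the antipode: since $\T = \bigoplus_{n \geq 0} \mathfrak{T}^n(R)$ is graded with $\mathfrak{T}^0(R) = \Q$, it is connected, and a connected graded bialgebra is automatically a Hopf algebra, with antipode defined recursively by $S(\id_\T) = \id_\T$ and $S(T) = -T - \sum S(R(c))\, L(c)$, the sum over nontrivial admissible cuts $c$ (those with $R(c) \neq T$), which terminates because each $R(c)$ in a nontrivial cut has strictly fewer vertices than $T$. I expect the coassociativity bijection to be the main obstacle, not because it is deep but because the multi-rooted setting means a single subtree $t_i$ can receive edges of $c$ at several of its leaves; I will need to state carefully that admissibility is checked component-by-component and that the ``final point / starting point'' dichotomy of Definition~\ref{...} partitions subtrees unambiguously, so that the three-fold tensor decomposition is well-defined on both sides. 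Everything else is routine diagram-chasing once that bijection is set up.
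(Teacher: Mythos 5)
Your proposal follows essentially the same route as the paper: compatibility of $\Delta$ with the disjoint-union product via pairs of admissible cuts, coassociativity via the correspondence between nested admissible cuts (a cut $c$ of $T$ plus a cut of $L(c)$ versus a cut $c'$ of $T$ plus a cut of $R(c')$), and the antipode for free from connectedness and grading. Your explicit attention to why the union of $c$ with an admissible cut of $L(c)$ remains admissible in the multi-rooted setting is a point the paper passes over more quickly, but it is the same argument.
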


\begin{proof}
Since $\T$ is connected and graded, if it is a bialgebra, it is a Hopf
algebra.

First I show that $\T$ is a bialgebra. The coproduct defined in
\eqref{treecoprod} is compatible with multiplication on $\T$: \bas
\Delta (TS) = \Delta(T) \Delta(S) \eas for $S, T \in \T$. Let $L_T$,
$L_S$ be the pruned forests of $T$ and $S$, and $R_T$ and $R_S$ the
root forests of $T$ and $S$. Then \bas \Delta (T) \Delta(S) = \sum_{d
  \textrm{ admis. of }T}\sum_{c \textrm{ admis. of }S}R_S(c)
R_T(d)\otimes L_S(c)L_T(d) \;.\eas Since the product of trees is the
disjoint union, an admissible cut of $TS$ is an element of the form
$d \cup c$, where $d$ is an admissible cut of $T$, and $c$ is an admissible
cut of $S$. Therefore, \bas \Delta (TS) = \sum_{d \cup c \textrm{
    admis. of }TS}R_S(c) R_T(d)\otimes L_S(c)L_T(d) = \Delta(T)
\Delta(S) \;.\eas

To see coassociativity, consider $c$, an admissible cut of $T$. Write
\bas \Delta_c(T) = R(c) \otimes L(c) \; .\eas Let $c_r$ be an
admissible cut of the forest $R(c)$. Then \ba (\Delta_{c_r} \otimes
\id)\Delta_c(T) = R_{c_r}(R_c(T)) \otimes L_{c_r}(R_c(T)) \otimes L_c(T)
\;.\label{coprodR}\ea Since the trees in the forest $R(c)$ are
subtrees of $T$, $c_r$ is also an admissible cut of $T$. The edges in
$c$ are an admissible cut of the forest formed by the product
$R_{c_r}(T) \cdot L_{c_r}(T)$. Write $c = c_1 \cup c_2$, with $c_1$ an
admissible cut of $R_{c_r}(T)$ and $c_2$ and admissible cut of
$L_{c_r}(T)$. Then $c' = c_r \cup c_1$ is an admissible cut of
$T$. The components of $R_{c'}(T) = R_{c_1}(R_{c_r}(T))$ are attached
to the source vertexes of the edges in $c'$ while the edges in
$L_{c_1}(R_{c_r}(T))$ are attached only to the terminal vertexes of
the edges in $c_1$. Furthermore, notice that by construction, \ba
R_{c'}(T) = R_{c_1}(R_{c_r}(T)) = R_{c_r}(R_c(T))
\;.  \label{coprod1}\ea Since $L_{c_1}(R_{c_r}(T))$ is part of the
forest $L_{c'}(T)$, \ba L_c(T) = L_{c_1}(R_{c_r}(T)) \cdot L_{c_2}(T)
= L_{c_2}(L_{c'}(T)) \;. \label{coprod2}\ea Since $c_2$ is an
admissible cut of $L_{c_r}(T)$, \ba L_{c_r}(R_c(T)) =
R_{c_2}(L_{c'}(T)) \;. \label{coprod3}\ea Combining equations
\eqref{coprodR} \eqref{coprod1} \eqref{coprod2} and \eqref{coprod3}
gives \bas R_{c'}(T) \otimes R_{c_2}(L_{c'}(T)) \otimes
L_{c_2}(L_{c'}(T)) =(\id \otimes \Delta_{c_2})\Delta_{c'}(T) \;.\eas
\end{proof}

\begin{rem}
Notice that if $T \in \T$ is a single rooted tree, the coproduct
defined above matches the coproduct and definition of admissible cut
in \cite{CK98}. For a single rooted tree, $R(c)$ is always a
tree. If $T$ is multi-rooted, $R(c)$ may be a forest.
\end{rem}

\begin{dfn}
Let $W(R)$ be the algebra of non-commutative words on $R$-deco polygons. 
\end{dfn}

The algebras $W(R)$ and $\bar T(V_\bullet(R))$ are isomorphic as
commutative Hopf algebras. There is a commutative product given by the
shuffle product and a coproduct given by deconcatenation. This is the
same as given in the bar construction in \eqref{barcoprod}. If $w =
w_1\ldots w_n \in W(R)$, with the $w_i$ non-trivial $R$-deco polygons,
\bas \Delta w_1 \ \ldots w_n = \sum_{i=0}^n (w_1 \otimes \ldots
\otimes w_i)\otimes (w_{i+1} \otimes \ldots \otimes w_n) .\eas There
is a natural identification \bas (W(R), \sha, \Delta) \simeq
(\bar{T}(V_\bullet(R)), \sha, \Delta) \simeq
(\bar{T}(\mathcal{P}^{(1)}_\bullet(R)), \sha, \Delta) \;. \eas



There is an algebra homomorphism from the algebra of trees, $\T$, to
the algebra of words, $W(R)$, which identifies the partial order
represented by $T$ with a sum of words in $W(R)$. I first need to define
linearizations of trees.

\begin{dfn}For $T \in \mathcal{T}^n(R)$, a partial order
preserving a linearization of $T$ is a word on $R$-deco polygons \bas
\lambda(T) = \lambda_1\otimes \lambda_2\otimes \ldots \otimes \lambda_n
\in W(R) \eas where each $\lambda_i$ is an $R$ deco polygon labeling a
vertex of $T$. If $\lambda_i \prec \lambda_j$ as vertexes in $T$, then
$i < j$. \end{dfn}

Let $\text{Lin}(T)$ be the set of partial order preserving
linearizations of trees. For any $\lambda \in \text{Lin}(T)$, the
polygon $\lambda_1$ is always the label of a root of $T$ and
$\lambda_n$ is always the label of a leaf of $T$. A forest in $\T$
also represents a partial order on its vertexes. The linearization of
trees extends naturally to forests.

In this paper, the partial order of $T$ is viewed as the sum of its
partial order preserving linearizations. I define a map from trees to
words by mapping each tree to the sum of its linearizations: \bas
\Lambda: \T &\rightarrow W(R) \\ T &\mapsto \sum_{\lambda \in
  \textrm{Lin}(T)}\lambda(T) \\ T' \cdot T &\mapsto \sum_{\lambda' \in
  \textrm{Lin} (T')}\lambda'(T') \sha \sum_{\lambda \in
  \textrm{Lin}(T)}\lambda(T) = \Lambda(T) \sha \Lambda(T') ,\eas where
$\sha$ is the shuffle product on $W(R)$.

\begin{eg}
Let $T$ be the tree, \bas T = \xy \POS(2,6) *+{\bullet} *\cir{}
\POS(2,6) \ar@{-} +(-4,-4) \ar@{-} +(4,-4) \POS(0,8) *{^A} \POS(-2,2)
*+{\bullet} \POS(-4,0)*{^C} \POS(10,6) *+{\bullet} *\cir{} \POS(10,6)
\ar@{-} +(-4,-4) \POS(12,8) *{^B} \POS(6,2) *+{\bullet} \POS(4,0)*{^D}
\POS(6,2) \ar@{-} +(0,-4) \POS(6,-2) *+{\bullet} \POS(4,-4)*{^E}
\endxy \eas It has two root vertexes $A$ and $B$.  Then \bas \lambda(T) =
A\otimes B\otimes C\otimes D\otimes E \quad \rm{and} \quad \lambda'(T) =
B\otimes A\otimes D\otimes E\otimes C \eas are two partial order
preserving linearizations of $T$. The sum of all partial ordered
preserving linearizations is \bas \Lambda(T) = (A\sha B)\otimes
(C\sha(D\otimes E))+ A\otimes C\otimes B\otimes D\otimes E \;.\eas
\end{eg}

\begin{thm}
The map $\Lambda: \T \rightarrow W(R)$ is a bialgebra homomorphism.
\label{bialghomo}\end{thm}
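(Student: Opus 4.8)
The plan is to establish the two halves of the homomorphism property separately. The algebra part is essentially built into the definition of $\Lambda$: it was defined so that $\Lambda(T'\cdot T)=\Lambda(T)\sha\Lambda(T')$, it sends the empty tree $T_\emptyset$ to the empty word (the unit of $W(R)$), and since a tree with $n$ vertices goes to a sum of length-$n$ words, it intertwines the two augmentations (projection onto $\mathfrak{T}^0(R)=\Q$ and onto the length-$0$ component of $W(R)$). So the substance of the theorem is the coalgebra identity $\Delta_{W(R)}\circ\Lambda=(\Lambda\otimes\Lambda)\circ\Delta_{\T}$. Since $\Delta_{\T}$ is multiplicative by the preceding lemma, deconcatenation on $W(R)$ is compatible with $\sha$, and $\Lambda$ is multiplicative, both sides of this identity are algebra maps $\T\to W(R)\otimes W(R)$; hence it suffices to verify it on a single connected tree $T$ with vertex set $V$ of size $n$ (the general forest case then follows, and in any case the argument below applies to forests verbatim).

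For such a $T$ I would expand the two sides as
\[
\Delta_{W(R)}\Lambda(T)=\sum_{\lambda\in\mathrm{Lin}(T)}\ \sum_{i=0}^{n}(\lambda_1\otimes\cdots\otimes\lambda_i)\otimes(\lambda_{i+1}\otimes\cdots\otimes\lambda_n),
\]
\[
(\Lambda\otimes\Lambda)\Delta_{\T}(T)=\sum_{c\ \mathrm{admis.}}\ \sum_{\mu\in\mathrm{Lin}(R(c))}\ \sum_{\nu\in\mathrm{Lin}(L(c))}\mu\otimes\nu,
\]
and match them term by term via a bijection between pairs $(\lambda,i)$ and triples $(c,\mu,\nu)$ under which $\mu=\lambda_1\otimes\cdots\otimes\lambda_i$ and $\nu=\lambda_{i+1}\otimes\cdots\otimes\lambda_n$. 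The bijection factors through the order ideals of the poset underlying $T$: to $(\lambda,i)$ I assign $S=\{\lambda_1,\dots,\lambda_i\}\subseteq V$, which is a down-set precisely because $\lambda$ respects the partial order of $T$; to $S$ I assign the cut $c(S)$ of edges of $T$ with tail in $S$ and head in $V\setminus S$. Because $S$ is a down-set, \emph{every} edge of $T$ between $S$ and $V\setminus S$ points this way, so deleting $c(S)$ from $T$ leaves exactly the edges internal to $S$ and those internal to $V\setminus S$, i.e. the forests $T|_S$ and $T|_{V\setminus S}$. The inverse assignment is concatenation: $(\mu,\nu)\mapsto(\mu\otimes\nu,\,|\mu|)$.

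The hard part — the combinatorial core — is to show that $S\mapsto c(S)$ is a bijection from order ideals of $T$ onto the admissible cuts appearing in $\Delta_{\T}(T)$, with $R(c(S))=T|_S$ and $L(c(S))=T|_{V\setminus S}$ (the empty cut occurring twice, for $S=\emptyset$ and for $S=V$). In one direction, each component of $T|_S$ meets $c(S)$ only through tails, so it lies in the root forest, while each component of $T|_{V\setminus S}$ meets $c(S)$ only through heads, so it lies in the pruned forest; this is exactly the admissibility condition. Conversely, given an admissible cut $c$, I would argue that $V(R(c))$ is a down-set of $T$: a $\prec_T$-path witnessing $w\prec v$ with $v\in R(c)$ cannot traverse a cut edge, since its last cut edge would enter a root-forest component through its head, contradicting that such components meet $c$ only through tails — hence $w\in V(R(c))$, and one checks $c=c\bigl(V(R(c))\bigr)$. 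Finally, for a down-set $S$ any $\prec_T$-path between two vertices of $S$ stays inside $S$, and dually $V\setminus S$ is an up-set with the same property, so the partial orders of $T|_S$ and $T|_{V\setminus S}$ are the restrictions of $\prec_T$; consequently $\lambda_1\otimes\cdots\otimes\lambda_n\in\mathrm{Lin}(T)$ if and only if $\lambda_1\otimes\cdots\otimes\lambda_i\in\mathrm{Lin}(T|_S)$ and $\lambda_{i+1}\otimes\cdots\otimes\lambda_n\in\mathrm{Lin}(T|_{V\setminus S})$ with $i=|S|$. This makes deconcatenation and concatenation mutually inverse along $S\leftrightarrow c(S)$, yielding the termwise equality of the two expansions and hence the coalgebra identity; combined with the algebra and counit observations, $\Lambda$ is a bi-algebra homomorphism.
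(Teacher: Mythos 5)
Your proposal is correct and follows essentially the same route as the paper's proof: both match the deconcatenation terms $(\lambda,i)$ of $\Delta\Lambda(T)$ against pairs of linearizations of $R(c)$ and $L(c)$ over admissible cuts $c$. The only difference is one of rigor, not of method --- you factor the correspondence through order ideals (down-sets) of the poset underlying $T$ and verify both directions of the bijection $S\leftrightarrow c(S)$ explicitly, steps the paper asserts without detailed proof.
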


\begin{proof}
The algebra homomorphism comes from the construction of the map
$\Lambda$. The coalgebra homomorphism is shown here.

For $T\in \mathcal{T}^n(R)$, the coproduct on $T$ is 
\bas \Delta(T) = \sum_{c \textrm{ admis.}} R(c) \otimes L(c) \eas and
the coproduct on the image, $\Lambda(T)$, is \bas \Delta
\Lambda(T)= \sum_{i=0}^{n}\sum_{\lambda\in \textrm{Lin}(T)} \left(\lambda_1\otimes
\ldots\otimes \lambda_i\right) \otimes\left(\lambda_{i+1}\otimes
\ldots \otimes \lambda_{n}\right) \;. \eas 

Any decomposition of a partial order preserving linearization $\lambda(T)$,
$[\lambda_1\otimes \ldots\otimes\lambda_i]$ and
$[\lambda_{i+1}\otimes\ldots \otimes \lambda_{n}]$, can be written as
a tensor product of partial order preserving linearization of forests of the form $\rho(R)$ and
$\eta(L)$ with $R$ and $L$ sub-forests of $T$ defined by the vertex
sets $\{\lambda_1 \ldots \lambda_i\}$ and $\{\lambda_{i+1} \ldots
\lambda_n\}$ respectively. The set of edges of $T$ that connect the
vertexes $\lambda_j$ to $\lambda_k$ for $j \leq i$ and $k > i$ define
an admissible cut of $T$.

For each admissible cut $c$, the trees in the forests $L(c)$ and
$R(c)$ are sub-trees of $T$. Let $\eta_c \in \text{Lin}(L(c))$ and
$\rho_c\in \text{Lin}(R(c))$ be partial order preserving
linearizations. Then \bas (\Lambda \otimes \Lambda) \circ \Delta(T) =
\sum_{c \textrm{ admis.}}  \mathop{\sum_{(\eta_c, \rho_c) \in}}_{
  \text{Lin}(R(c)) \times \text{Lin}(L(c))}\rho_c \otimes \eta_c \;,
\eas where the interior sum is taken over all partial order preserving
linearizations of $R(c)$ and $L(c)$. By definition of admissible cut,
each pair of partial order preserving linearizations $\rho_c \otimes
\eta_c$, corresponds to a decomposition of a partial order preserving
linearizations $\lambda$ of $T$, $[\lambda_1\otimes
  \ldots\otimes\lambda_i] \otimes[\lambda_{i+1}\otimes\ldots
  \otimes\lambda_{n}]$ where the vertexes of $R(c)$ precede the
vertexes of $L(c)$.
\end{proof}

\begin{rem}
Words in $W(R)$ represent a partial order on its letters. For $u$ and
$v$ letters in $W = \sum_i W_i \in W(R)$, $u\prec v$ if $u= x_{i_j}$
and $v = x_{i_k}$ in $W_i$ with $i_j \leq i_k$. Similarly, $v\prec u$
if $u= x_{i_j}$ and $v = x_{i_k}$ in $W_i$ with $i_j \geq i_k$. Under
this definition, the map $\Lambda$ is an order preserving Hopf algebra
homomorphism.
\end{rem}

To complete the analysis in this paper, I need to introduce a method
of inserting letters into words.

\begin{dfn} Define two insertion products on $W(R)$
\bas u\star_{\prec v} :  W(R)
& \rightarrow W(R) \\  x_1\otimes \ldots \otimes x_n &\rightarrow 
\begin{cases} \sum_{i < k}x_1\otimes \ldots u \otimes x_i\ldots  \otimes x_n & \textrm{if } v= x_k \\ 0 & v \not \in \{x_1,\ldots, x_k\} \end{cases} 
 \eas and \bas u\star_{\succ v} :  W(R)
& \rightarrow W(R) \\  x_1\otimes \ldots \otimes x_n &\rightarrow 
\begin{cases} \sum_{i >k}x_1\otimes \ldots x_i \otimes u\ldots  \otimes x_n & \textrm{if } v= x_k \\ 0 & v \not \in \{x_1,\ldots, x_k\}\end{cases}.
 \eas If $w= 1$ then \bas u\star_{\prec 1} 1 = u\star_{\succ 1} 1 = u\;.\eas 
 \label{grafting}\end{dfn}

To see this as a product, generalize the insertion of a letter to the intertwining of a word with another.

\begin{dfn}
Let $w$ and $w'$ be two words. Define a set of words $\{W_i | i \in
I\}$ such that $w \sha w' = \sum_{i\in I} W_i$, \bas \star_{u\prec v} :
W(R) \otimes W(R) & \rightarrow W(R) \\ (w,w') &\rightarrow
\begin{cases} \sum_{\begin{subarray}{c}i\in I \st \\ u \prec v\end{subarray}} W_i &
\textrm{if $u$ and $v$ letters of $w$ and $w'$ resp.} \\ 0 &
\textrm{if $u$ or $v$ not letters of $w$ and $w'$ resp.} \end{cases}
\eas Similarly, \bas \star_{u\succ v} : W(R) \otimes W(R) &
\rightarrow W(R) \\ (w,w') &\rightarrow
\begin{cases} \sum_{\begin{subarray}{c}i\in I \st \\ u \succ v\end{subarray}} W_i &
\textrm{if $u$ and $v$ letters of $w$ and $w'$ resp.} \\ 0 &
\textrm{if $u$ or $v$ not letters of $w$ and $w'$ resp.} \end{cases}
\eas
\end{dfn}

In the product $w \star_{u\prec v} w'$, the letter $u \prec v$, while
$v \succ u$ in $w \star_{u\succ v} w'$. In this notation, $u
\star_{\prec v}w := u \star_{u\prec v}w$ and $u \star_{\succ v}w := u
\star_{u\succ v}w$.

For further analysis, I extend this product for shuffles of words. For
$v$ a letter of the word $w$ and $v'$ a letter of the word $w'$,
define \bas (u \star_{\prec \{v,v'\}}w\sha w') := (u \star_{\prec v}w)
\star_{u\prec v'} w' \;, \eas and \bas (u \star_{\succ \{v,v'\}}w\sha
w') := (u \star_{\succ v}w) \star_{u\succ v'} w' \;. \eas

These operators can be lifted to grafting operators on trees. If $w =
\Lambda(T)$, and $w' = \Lambda(T')$, for $T$, and $T'$ in $\T$, \bas
(u \star_{\prec \{v,v'\}}w\sha w') = \Lambda(S) \eas where $S$ is the
multi-rooted tree formed by connecting the vertex labeled $v$ in $T$
and the vertex labeled $v'$ in $T'$ to a new root vertex labeled
$u$. The other insertion operator corresponds to connecting the two
marked vertexes to a new leaf, with the label $u$.

The coproduct on the images of these insertion operators behaves as
follows.

\begin{lem}
Write $w= x_1\otimes \ldots \otimes x_n$. The coproduct \bas \Delta (
u \star_{\prec x_k} w) =   \sum_{a=0}^{k-1} (x_1\otimes \ldots
  \otimes x_a) \otimes u \star_{\prec x_k} (x_{a+1}\otimes \ldots
  \otimes x_n) \\ + (x_1\otimes \ldots \otimes x_{k-1}) \sha u \otimes
  (x_k\otimes \ldots \otimes x_n) \\ + \sum_{a=k}^{n} u
  \star_{\prec x_k}(x_1\otimes \ldots \otimes x_a) \otimes
  (x_{a+1}\otimes \ldots \otimes x_n) \;.\eas Similarly, the coproduct
  The coproduct \bas \Delta ( u \star_{\succ x_k} w) =
     \sum_{a=0}^{k-1} (x_1\otimes \ldots \otimes x_a) \otimes u
    \star_{\succ x_k} (x_{a+1}\otimes \ldots \otimes x_n) \\ +
    \sum_{a=k}^{n} u \star_{\succ x_k}(x_1\otimes \ldots \otimes x_a)
    \otimes (x_{a+1}\otimes \ldots \otimes x_n)\\ + (x_1\otimes \ldots
    \otimes x_k) \otimes u \sha (x_{k+1}\otimes \ldots \otimes
    x_n)\;.\eas
 \label{graftingcoprod}
\end{lem}

\begin{proof}
The proof is straight forward from the definition of coproduct on all
words in the sum $u \sha w$ such that the letter $u$ appears to the left of
$x_k$, in the case of $\star_{\prec x_k}$, or to the right of
$x_k$, in the case of $\star_{\succ x_k}$
\end{proof}

\subsection{Hopf algebras of trees associated to dissected 
polygons\label{Hopftrees}}

To continue to generalizing the construction in \cite{GGL05}, I associate
a family of (multi-rooted) tree structures to each dissection of a
polygon. In subsection \ref{hopftodif} I define a family of bar complexes
on $\mcp$ and associate to each polygon a bar element in each bar
complex. 

The rest of this paper is concerned with subalgebras
$\Tsub{\phi}\subset \T$ generated by sets corresponding to polygons
and their dissections.

\begin{dfn}
A dual tree algebra $\Tsub{\phi}\subset \T $ is generated by a dual
tree generating set, which assigns to each polygon dissection pair
$(P,d)$ an element of $\T$, $T_{\phi,d}(P)$, \bas \phi =\{T_{\phi,d}(P) |P\;
R-\textrm{deco polygon} \quad ; \quad d \in D(P) \}\;.\eas
\end{dfn}

In this paper, the generators, $T_{\phi,d}(P) \in \T$ are trees with
an overall sign.

\begin{dfn}
I write the overall sign associated to a tree
$\sgn(T_{\phi,d}(P))$. For $d \in D(P)$, sometimes I write this
$\sgn_\phi(P,d)$. When the polygon is clear, I write $\sgn_\phi(d)$.
\end{dfn}

I am particularly interested in the cases when $\Tsub{\phi}$ has a Hopf
algebraic structure.

\begin{dfn}
The dual tree algebra $\Tsub{\phi}$ is a dissection compatible Hopf
algebra if
\begin{enumerate}
\item The dual tree algebra $\Tsub{\phi}$, generated by the set $\phi$, is a
  sub-Hopf algebra of $\T$. 
\item The edges of each tree $T_{\phi,d}(P)$ correspond to
  non-trivial arrows in $d$.
\item Let $d'$ be a sub-dissection of $d$, $d' \subset d \in D(P)$,
  corresponding to the subtree $T$ of $T_{\phi,d}(P)$. There exists an
  $R$-deco polygon $Q$ such that $T$ and $T_{\phi,d'}(Q)$, with $d'
  \in D(Q)$, agree up to a sign. In fact \bas \sgn(T_{\phi,d'}(Q)) T =
  T_{\phi,d'}(Q) \;.\eas \label{compat1}
\item Consider $d' \subset d \in D(P)$ as above. The generator
  $T_{\phi, d\setminus d'}(P)$ is formed by replacing the subtree $T$
  in $T_{\phi,d}(P)$ with a single vertex labeled $Q$.\label{compat2}
\item For each subdissection $d' \subset d$, with corresponding
  generator $T_{\phi,d'}(Q)$, \bas \sgn(T_{\phi,d}(P)) =
  \sgn(T_{\phi,d'}(Q)) \sgn (T_{\phi,d\setminus d'}(P)) \;.\eas \label{compat3}
\end{enumerate}
\label{discomp}\end{dfn}

The third condition of definition \ref{discomp} ensures that a
dissection compatible Hopf algebra is coassociative. Notice that there
is no requirement that the labels of the vertexes of $T_{\phi,d}(P)$
correspond to the subpolygons associated to the dissection. However
conditions \ref{compat1} and \ref{compat2} impose strong conditions on
the vertex labels of the generators of dissection compatible Hopf
algebras.  In most of the examples I consider in this paper,
the vertexes of the generators are labeled by the the subpolygons
associated to the relevant dissection. Below I give some examples of
some generators of dissection compatible Hopf algebras.
\begin{eg}
Consider the following pair of polygon and dissection: \bas P =
\xy
\POS(0,4) *+{\bullet}
\POS(10,4) \ar@{=} +(-10,0)_6
\ar@{-} +(6,-6)^5
\POS(16,-2)\ar@{-} +(-6,-6)^4
\POS(0,-8)\ar@{-} +(10,0)_3
\ar@{-} +(-6,6)^2
\POS(0,4)\ar@{-} +(-6,-6)_1
\POS(-6,-2) \ar^{\alpha}@{->} +(14,6)
\POS(10,4) \ar^{\beta}@{->} +(-12,-10)
\POS(10,-8) \ar^{\gamma}@{->} +(4,8)
\hskip 40pt.
\endxy \eas Four possible elements of dual tree generating sets are
\bas T_{1,d}(P) =
 \xy
\POS(2,6) *+{\bullet} *\cir{}
\POS(2,6) \ar_{\alpha}@{-} +(0,-8)
\POS(6,6) *{^{16}}
\POS(2,-2) *+{\bullet}
\POS(2,-2) \ar_\beta@{-} +(0,-8)
\POS(5,-2) *{^{26}}
\POS(2,-10)  *+{\bullet}
\POS(2,-10) \ar_\gamma@{-} +(0,-8)
\POS(6,-10) *{^{\bf{352}}}
\POS(2,-18)  *+{\bullet}
\POS(5,-18) *{^{45}}
\endxy \hskip 20pt
T_{2,d}(P) =
 \xy
\POS(2,6) *+{\bullet} *\cir{}
\POS(2,6) \ar_{\alpha}@{-} +(0,-8)
\POS(6,6) *{^{16}}
\POS(2,-2) *+{\bullet}
\POS(2,-2) \ar_\beta@{-} +(0,-8)
\POS(5,-2) *{^{26}}
\POS(2,-10)  *+{\bullet}
\POS(2,-10) \ar_\gamma@{-} +(0,-8)
\POS(6,-10) *{^{\bf{532}}}
\POS(2,-18)  *+{\bullet}
\POS(5,-18) *{^{45}}
\endxy \hskip 20pt
T_{3, d}(P) =-
 \xy
\POS(2,6) *+{\bullet} *\cir{}
\POS(2,6) \ar_{\alpha}@{-} +(0,-8)
\POS(6,6) *{^{16}}
\POS(2,-2) *+{\bullet}
\POS(2,-2) \ar_\beta@{-} +(0,-8)
\POS(5,-2) *{^{26}}
\POS(2,-10)  *+{\bullet}
\POS(2,-10) \ar_\gamma@{-} +(0,-8)
\POS(6,-10) *{^{\bf{352}}}
\POS(2,-18)  *+{\bullet}
\POS(5,-18) *{^{45}}
\endxy \eas 
\bas T_{4,d}(P)=
\xy
\POS(0,4) *+{\bullet} *\cir{}
\POS(0,4) \ar_\alpha@{-} +(4,-8)
\POS(3,4) *{^{16}}
\POS(4,-4) *+{\bullet}
\POS(4,-4) \ar_\beta@{-} +(4,8)
\POS(7,-5) *{^{26}}
\POS(8,4) *+{\bullet} *\cir{}
\POS(8,4) \ar^\gamma@{-} +(4,-8)
\POS(13,4) *{^{\bf{352}}}
\POS(12,-4) *+{\bullet}
\POS(15,-5) *{^{45}}
\endxy \eas

For a fixed $P$ and $d \in D(P)$, the generators $T_{1,d}(P)$ and
$T_{3,d}(P)$ differ only by an overall sign. The only multi-rooted
tree in this example is $T_{4,d}(P)$. The vertexes in
$T_{2,d}(P)$ do not correspond to the set of subpolygons associated to
the dissections $d$ of $P$. Instead of a vertex labeled with the
polygon $352$, there is a polygon labeled $532$.
\label{orderings}\end{eg}

Next I give examples of four dual tree generating sets $\phi_i$, with
$i \in \{1 \ldots 4 \}$ such that $\Tsub{\phi_i}$ is a dissection
compatible Hopf algebra. The generators $T_{i,d}$ in example
\ref{orderings} correspond to elements in $\phi_i$. Before defining
these Hopf algebras and the construction of the corresponding
generators, I establish some notation.

\begin{dfn} 
Let $\tau$ be a map that reverses the orientation of a
polygon. \end{dfn} 

Specifically, for $P= r_1\ldots r_{n+1}$, with $r_i
\in R$, $\tau \in D_{2(n+1)}$ and $\tau(P) = r_n \ldots r_1
r_{n+1}$. For example,

\xy
\hskip 80pt
$P=$
\hskip 5pt
\POS(0,4) *+{\bullet}
\POS(10,4) \ar@{=} +(-10,0)_4
\ar@{-} +(0,-10)^3
\POS(10,-6) \ar@{-} +(-10,0)^2
\POS(0,4) \ar@{-} +(0,-10)_1
\hskip 60pt
$\tau(P)= $
\hskip 5pt
\POS(0,4) *+{\bullet}
\POS(10,4) \ar@{=} +(-10,0)_4
\ar@{-} +(0,-10)^1
\POS(10,-6) \ar@{-} +(-10,0)^2
\POS(0,4) \ar@{-} +(0,-10)_3
\hskip 40pt
\endxy

\begin{dfn}Define $\chi(\alpha)$ to be the weight of the cut off
polygon of the arrow $\alpha$. That is, $P_\alpha^\rst \in
\mathcal{P}_{\chi(\alpha)}^{(1)}(R)$.\end{dfn}

\begin{eg}
The following are four dissection compatible Hopf algebras.

\begin{description}
\item[$\Tsub{\phi_1}$] This is a single rooted dual tree algebra
  generated by the set $\phi_1$. The root vertex of $T_{\phi_1, d}(P)
  \in \phi_1$ is labeled by the subpolygon that contains the original
  root side and first vertex of $P$. The edges, corresponding to
  arrows in $d$, are oriented to flow away from the root vertex. Since
  this is a single rooted tree, I need only consider the final
  vertexes of any edge (corresponding to the dissecting arrow
  $\alpha$.) The initial vertex is either the final vertex of a
  different arrow, or the unique root, whose label has been
  defined. Consider the edge corresponding to the dissecting arrow
  $\alpha$. The final vertex is labeled by the subpolygon
  corresponding to the region further away from the first vertex/root
  side of $P$. In example \ref{orderings}, $T_{1,d}(P) \in \phi_1$.
\item[$\Tsub{\phi_2}$] This is a single rooted dual tree algebra
  generated by the set $\phi_2$. The single root vertex of
  $T_{\phi_2,d}(P) \in \phi_2$ is labeled by the subpolygon that
  contains the original root side and first vertex of $P$. The edges,
  corresponding to arrows in $d$, are oriented to flow away from the
  root vertex.  If $\alpha$ is a forwards arrow, then the final vertex
  is labeled by the subpolygon corresponding to the region further
  from the root. If $\alpha$ is a backwards arrow, then the final
  vertex is labeled by the same polygon with reversed orientation. The
  generator has an overall sign \bas \sgn(T_{\phi_2,d}(P)) =
  (-1)^{\sum_{\alpha \in d \textrm{ backwards}}\chi(\alpha)}\; .\eas
  In example \ref{orderings}, $T_{2,d}(P) \in \phi_2$.
\item[$\Tsub{\phi_3}$] This is a single rooted dual trees algebra
  generated by the set $\phi_3$. The single root vertex of
  $T_{\phi_3,d}(P) \in \phi_3$ is labeled by the subpolygon that
  contains the original root side and first vertex of $P$. The edges,
  corresponding to arrows in $d$, are oriented to flow away from the
  root vertex. The final vertex is labeled by the subpolygon
  corresponding to the region further away from the first vertex/root
  side of $P$. The generator has an overall sign \bas
  \sgn(T_{\phi_3,d}(P)) = (-1)^{\# \textrm{ backwards arrows in } d}\;
  .\eas In example \ref{orderings}, $T_{3,d}(P)\in \phi_3$.
\item[$\Tsub{\phi_4}$]This is a multi-rooted dual tree algebra generated by
  the set $\phi_4$, with generators $T_{\phi_4,d}(P) \in \phi_4$. The
  edges, corresponding to arrows in $d$, flow from the region to the
  left of the arrow to the region to the right. The initial vertex of
  an edge is labeled with the subpolygon associated to the region to
  the left, and the final vertex by the subpolygon associated to the
  right. For the polygon pair in example \ref{orderings},
  $T_{4,d}(P)\in \phi_4$.
\end{description}
\label{keyexamples} \end{eg}

In the dissection compatible sub Hopf algebras in example
\ref{keyexamples}, write the generating set of $\Tsub{\phi_i}$ \bas \phi_i
= \{T_{\phi_i,d}(P) |d \in D(P) \quad ; \quad P\; R-\textrm{deco polygon}\}
\eas for $i \in \{1, 2, 3, 4,\}$. Notice that the different generating
sets $\phi_1$ and $\phi_3$ generate isomorphic Hopf algebras $\Tsub{\phi_1}
\simeq \Tsub{\phi_3}$ under the relation \bas T_{\phi_1,d}(P) =
\sgn_{\phi_3}(d) (T_{\phi_3,d}(P)) \;.\eas Each $\Tsub{\phi_i}$ satisfies
conditions 2-5 of definition \ref{discomp}. To see that these are
dissection compatible Hopf algebras, it remains to check that they are
sub-Hopf algebras. The Hopf algebra $\Tsub{\phi_2}$ is exactly the Hopf
algebra defined in \cite{GGL05}, section 6. It remains to check that
$\Tsub{\phi_1}$, $\Tsub{\phi_3}$ and $\Tsub{\phi_4}$ are Hopf algebras.

\begin{lem}
The algebras $\Tsub{\phi_1}$, $\Tsub{\phi_3}$ and $\Tsub{\phi_4}$ are Hopf algebras.
\end{lem}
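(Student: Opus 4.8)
The plan is to show that each $\Tsub{i}$ is closed under the coproduct $\Delta$ of $\T$; since $\T$ is already a coassociative Hopf algebra by the earlier lemma, and each $\Tsub{i}$ is connected and graded (by weight), closure under $\Delta$ upgrades $\Tsub{i}$ to a sub-Hopf algebra. So the entire content reduces to: for each rule $\phi_i$ and each polygon $P$, the coproduct $\Delta\big(\phi_i(P)\big)$ again lies in $\phi_i(V(R))\otimes\phi_i(V(R))$, i.e.\ it is a sum of tensor products of $\phi_i$-images of (orientation-reversed, signed) subpolygons. The key combinatorial fact I would isolate first is the geometric meaning of an admissible cut $c$ of a tree $T_{\phi_i,d}(P)$: since the edges of $T_{\phi_i,d}(P)$ are exactly the arrows of the dissection $d$, cutting a set of edges corresponds to removing a subset of arrows from $d$, and an \emph{admissible} cut corresponds precisely to a sub-dissection $d''\subseteq d$ together with the nested structure that lets the remaining pieces assemble into the two forests $R(c)$ and $L(c)$. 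Each connected subtree $t_j$ produced by the cut is itself $T_{\phi_i,d_j}(P_j)$ for the corresponding subpolygon $P_j$ (with inherited root/orientation data) and the induced dissection $d_j$ of $P_j$. This is the crux: verifying that the labelling rule $\phi_i$ is \emph{self-similar} under contracting arrows, so that pieces of a $\phi_i$-tree are again $\phi_i$-trees of subpolygons.

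Concretely, I would run the argument as follows. Fix $i$ and fix $P$. Summing over all dissections $d\in D(P)$ and all admissible cuts $c$ of $T_{\phi_i,d}(P)$, reorganize the sum by first choosing which arrows of $P$ survive in the ``root part'' versus the ``pruned part'': an arrow $\alpha\in d$ either lies entirely in one of the surviving pieces or is one of the cut edges. Because the arrows are non-intersecting, the cut edges together with the choice of which side is the root side carve $P$ into exactly the subpolygons $\{P_0,\dots,P_k\}$ associated to a sub-dissection, and the remaining (uncut) arrows distribute among these subpolygons as dissections of them. Thus $\Delta\big(\phi_i(P)\big)=\sum \big(\text{stuff}\big)\otimes\big(\text{stuff}\big)$ where each tensor factor is, after matching signs and $\tau$'s, a sum of the form $\phi_i(\text{subpolygon})$. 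For $\phi_1$ and $\phi_4$ the sign bookkeeping is trivial (no signs, or only the $l/r$ orientation which is manifestly local). For $\phi_3$ each backwards arrow contributes a global $(-1)$ to the terminal vertex; I would check that when a backwards arrow of $P$ becomes a backwards arrow of a subpolygon the sign is inherited correctly, and that a backwards arrow that gets cut contributes consistently to whichever forest it joins — this is the one place a sign could in principle fail to factor, and it is where I expect to spend the most care.

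The main obstacle, then, is precisely this sign/orientation accounting for $\phi_3$ (and, to a lesser extent, confirming the root/orientation inheritance for $\phi_4$, which is multi-rooted so $R(c)$ is genuinely a forest). I would handle $\phi_3$ by comparison with $\phi_2$: the rules $\phi_2$ and $\phi_3$ differ only in the sign attached to the terminal vertex of a backwards-arrow edge ($(-1)^{|Q|}\tau Q$ versus $(-1)Q$), and $\Tsub{2}$ is already known to be a Hopf algebra by \cite{GGL05}; so it suffices to show the discrepancy $(-1)^{|Q|}\tau Q$ versus $(-1)Q$ is itself multiplicative over the cut, i.e.\ that $\sgn_{\phi_3}$ and $\sgn_{\phi_2}$ restricted to sub-dissections behave compatibly. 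For $\phi_1$ and $\phi_4$ I expect the argument to be essentially a transcription of the $\Tsub{2}$ proof in \cite{GGL05} with the signs and $\tau$'s deleted, respectively with ``root side'' replaced throughout by ``left region''. Once closure under $\Delta$ is established in all three cases, connectedness and the grading give the antipode for free, completing the proof.
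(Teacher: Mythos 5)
Your proposal is correct and follows essentially the same route as the paper: both reduce the statement to closure of $\Tsub{i}$ under the coproduct via the observation that the rules are local under dissection (the pieces of $T_{\phi_i,d}(P)$ produced by an admissible cut are again $\phi_i$-trees of the associated subpolygons with their induced dissections), followed by induction on the number of cut edges. Your extra care with the $\phi_3$ signs and the suggested comparison with $\phi_2$ are minor refinements the paper elides rather than a different argument.
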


\begin{proof}
Since all three are graded subalgebras of $\T$, it is sufficient to
show that these are sub bialgebras. The product structure and
coproduct structure on each are inherited from $\T$. It remains to
check that \bas \Delta: \Tsub{\phi_i} \rightarrow \Tsub{\phi_i} \otimes \Tsub{\phi_i}
\eas for $i \in \{1, 3, 4\}$. For this, it is sufficient to work only
with the generators.

Let $\{P_1, \ldots P_{|d|+1}\}$ be the vertexes labeling
$T_{\phi_4,d}(P)$. Since the edges of $T_{\phi_4, d}(P)$ correspond to arrows to
the dissection $d$, an admissible cut, $c$, of $T_{\phi_4, d}(P)$ can be
thought of as a subdissection $c\subset d$. Let $\{Q_1, \ldots Q_n\}$
be the polygons associated to the dissection $c \in D(P)$, with $R(c)
= \prod_{i = 1}^jT_i$ and $L(c) = \prod_{i=j+1}^n T_i$. Write $d =
(\cup_{k=1}^nd_i)\cup c$ with the subdissection $d_i$ corresponding to
the edges in $T_i$. It remains to check that \bas  T_i = \sgn_{\phi_4}(d_i)T_{\phi_4,
  d_i}(Q_i) \;. \eas 

For each $\beta \in d\setminus c$, let $P_k$ and $P_j$ be the regions
of $P$ to the left and right of $\beta$ respectively. By definition,
$\beta \in d_i$ for some $i$. Then $P_k$ and $P_j$ are also the
sub-regions of $Q_i$ to the left and right of $\beta \in d_i$.  Thus
$T_{d_i} = T_{4,d_i}(Q_i)$.

The argument is similar for $\Tsub{\phi_1}$ and $\Tsub{\phi_3}$. Since
they are isomorphic, it is sufficient to work only with $\phi_1$. Let
$\{P_1, \ldots, P_{|d|+1}\}$ be the vertex labels of $T_{\phi_1,d}(P)
\in \phi_1$. Consider the admissible cut $c$ with $L(c) =
\prod_{i=1}^{n-1}T_i$ and $R(c) = T_{n}$. Then $\{Q_1, \ldots ,
Q_{n}\} $ are the polygons associated to $c$.  The dissection $d$
can be written $d = (\cup_{i=1}^{n}d_i) \cup c$ with $d_i$
corresponding to the edges of $T_i$. For each $\beta \in d \setminus
c$, let $P_i$ be the region of $P$ on the root side of $\beta$ and
$P_j$ on the cutoff side. Let $\beta \in d_k$. Then $P_i$ and $P_j$
correspond to the regions corresponding to the root and the cutoff
sides of $\beta \in d_k \in D(Q_k)$. Thus $T_i =
\sgn_{\phi_1}(d_i)T_{\phi_1,d_i} (Q_i)$.
\end{proof}

\subsection{From generating sets to differentials\label{hopftodif}}

A dual tree generating set that defines a dissection compatible Hopf
algebra also defines a degree one differential on $\mcp$,  $\D:
\mathcal{P}_\bullet^{(i)}(R) \rightarrow
\mathcal{P}_\bullet^{(i+1)}(R)$ satisfying $\D \circ \D = 0$ and
the Leibniz rule \bas \D_\phi (a \wedge b) = (\D_\phi a) \wedge b +
(-1)^i a \wedge \D_\phi(b)\;, \eas where $a \in
\mathcal{P}^{(i)}_\bullet(R)$.

Let $\phi$ be a dual tree generating set. Consider the subset of
dissections $d \in D(P)$ such that $|d| =1$. Write the corresponding
elements of $\phi$ \bas T_{\phi, d} (P) = \sgn_\phi(d) \xy \POS(2,4)
*+{\bullet} *\cir{} \POS(2,4) \ar@{-} +(0,-8) \POS(6,4) *{^{P_d^1}}
\POS(2,-4) *+{\bullet} \POS(5,-4) *{^{P_d^2}} \endxy \;.\eas This
structure defines a an operator on $\mcp$.

\begin{dfn}
Define \bas \D_{\phi}(P) = \sum_{d \in D(P) ; |d|=1 } \sgn_{\phi}(d)
P_d^1 \wedge P_d^2 \; .\eas
\end{dfn}

\begin{thm}
If $\phi$ generates a dissection compatible Hopf
algebra, $\Tsub{\phi}$, then $\D_{\phi}$ is a degree one differential
operator on $\mcp$.\label{difdef}\end{thm} 

\begin{proof}

By construction, \bas \D_\phi \circ \D_\phi(P) =
\sum_{\begin{subarray}{c}\alpha, \;
    \beta\\ \textrm{dis. arrow}\end{subarray}} (P_\beta^1)_\alpha^1
\wedge (P_\beta^1)_\alpha^2 \wedge P_\beta^2 - P_\beta^1 \wedge
(P_\beta^2)_\alpha^1 \wedge (P_\beta^2)_\alpha^2 \\ +
(P_\alpha^1)_\beta^1 \wedge (P_\alpha^1)_\beta^2 \wedge P_\alpha^2 -
P_\alpha^1 \wedge (P_\alpha^2)_\beta^1 \wedge (P_\alpha^2)_\beta^2
\;. \eas Some of these terms are $0$. For instance, if $\beta \not \in
D(P_\alpha^i)$ then $D(P_\alpha^i)_\beta^i = 0$.

This can be calculated by considering the sum \ba (\Delta_\alpha
\otimes \id)\Delta_\beta - (\id \otimes \Delta_\alpha)\Delta_\beta +
(\Delta_\beta \otimes \id)\Delta_\alpha - (\id \otimes
\Delta_\beta)\Delta_\alpha \label{coprodform}\ea on the level of Hopf
algebras, and passing from the tensor product to the wedge product.

Ignoring the sign and the vertex labels, the generator
$T_{\phi,\{\alpha, \beta\}}(P)$ is one of three possible trees \bas
\xy \POS(2,6) *+{\bullet} *\cir{} \POS(2,6) \ar_{\alpha}@{-} +(0,-8)
\POS(2,-2) *+{\bullet} \POS(2,-2) \ar_\beta@{-} +(0,-8) \POS(2,-10)
*+{\bullet} \endxy \hspace{20pt} \xy \POS(0,4) *+{\bullet} *\cir{}
\POS(0,4) \ar_\alpha@{-} +(4,-8) \POS(4,-4) *+{\bullet} \POS(4,-4)
\ar_\beta@{-} +(4,8) \POS(8,4) *+{\bullet} *\cir{}
\endxy \hspace{20pt} \xy \POS(0,4) *+{\bullet} *\cir{} \POS(0,4)
\ar_\alpha@{-} +(-4,-8) \POS(-4,-4) *+{\bullet} \POS(0,4)
\ar^\beta@{-} +(4,-8) \POS(4,-4) *+{\bullet} \endxy \;.\eas

In the case of the linear tree, the first two terms of
\eqref{coprodform} are equal, and therefore cancel. The other two are
$0$. For the non-linear single rooted tree, the sum of the first and
third terms in \eqref{coprodform} gives $(P_\beta^1)_\alpha^1 \otimes P_\beta^2
\sha P_\alpha^2$. The other two are $0$. For the non-linear
multi-rooted tree, the sum of the second and fourth terms in
\eqref{coprodform} gives $P_\beta^1 \sha P_\alpha^2 \otimes
(P_\alpha^2)_\beta^2$. The other two are $0$. In both cases, the
shuffle product goes to zero as one passes to the wedge product.

Thus $\D_\phi \circ \D_\phi = 0$ as desired. 

 \end{proof}

Since the dual tree generating sets $\phi_i$, $i \in \{ 1, 2, 3, 4\}$
in example \ref{keyexamples} generate dissection compatible Hopf
algebras $\Tsub{\phi_i}$, they define degree 1 differential operators
$\D_{i}$, respectively on $\mcp$.

\begin{eg}
The differentials defined by the sets $\phi_1$, $\phi_2$, $\phi_3$,
and $\phi_4$ are
\begin{enumerate}
\item $\D_{1}(P) = \sum_{d\in D(P), |d|=1} P_d^\rt \wedge P_d^\rst$
\item $\D_{2}(P) = \sum_{d \textrm{ forwards arrow}}  P_d^\rt \wedge P_d^\rst + \sum_{d \textrm{ backwards arrow}}  (-1)^{\chi(d)}P_d^\rt \wedge \tau( P_d^\rst)$
\item $\D_{3}(P) = \sum_{d \textrm{ forwards arrow}}  P_d^\rt \wedge P_d^\rst - \sum_{d \textrm{ backwards arrow}}  P_d^\rt \wedge  P_d^\rst$ 
\item $\D_{4}(P) = \sum_{d\in D(P), |d|=1} P_d^l \wedge P_d^r$
\end{enumerate}
\end{eg}

These differentials defined by dual tree generating sets are not all
distinct. Specifically, \bas \sgn_{\phi_3}(\alpha)
= \begin{cases}\sgn_{\phi_4}(\alpha) & \textrm{if $\alpha$
    forwards}\\- \sgn_{\phi_4}(\alpha) & \textrm{if $\alpha$
    backwards.}  \end{cases} \eas As a result, $\D_{3} = \D_{4}$.

\begin{dfn} 
The difference set between two dual tree generating sets $\phi$ and
$\psi$ is \bas \mathcal{S} = \{ \alpha \in D(P) | |\alpha| = 1 ;\; P \; \;
R-\textrm{deco}; \; T_{\phi, \alpha}(P) = \sgn_{\phi}(\alpha)\xy
\POS(2,4) *+{\bullet} *\cir{} \POS(2,4) \ar@{-} +(0,-8) \POS(6,4)
*{^{P_\alpha^1}} \POS(2,-4) *+{\bullet} \POS(5,-4) *{^{P_\alpha^2}}
\endxy \\ \textrm{ and } T_{\psi,\alpha}(P) = -\sgn_{\phi}(\alpha)\xy
\POS(2,4) *+{\bullet} *\cir{} \POS(2,4) \ar@{-} +(0,-8) \POS(6,4)
*{^{P_\alpha^2}} \POS(2,-4) *+{\bullet} \POS(5,-4) *{^{P_\alpha^1}}
\endxy\}\; \;.\eas
\end{dfn}

This condition on single dissections can be generalized to general
trees.

\begin{dfn}
Let $\Tsub{\phi}$ be a dissection compatible Hopf algebra. Let
$\mathcal{S}$ be the difference set between $\phi$ and another dual
tree generating set $\psi$. The dual tree algebra $\Tsub{\psi}$ is
almost $\phi$ compatible if, for any dissection $d$ of any
$R$-deco polygons $P$, $\sgn_\psi(P,d) = (-1)^{|d \cap
  \mathcal{S}|}\sgn_\phi(P,d)$, and the tree underlying the generator
$T_{\psi, d}(P)$ is formed by reversing the orientation of the edges
of $T_{\phi, d}(P)$ in $d \cap \mathcal{S}$.
\end{dfn}

The dissection compatible Hopf algebra, $\Tsub{\phi}$ is trivially an
almost $\phi$ compatible algebra.

\begin{cor} 
Let $\Tsub{\phi}$ be a dissection compatible Hopf algebra, and
$\Tsub{\phi'}$ an almost $\phi$ compatible algebra. Then
$\D_{\phi} = \D_{\phi'}$. \label{samedifdef}\end{cor}
\begin{proof}
Let $\mathcal{S}$ be the difference set between $\phi$ and $\phi'$.
By definition, \bas \D_\phi(P) = \sum_{d \in D(P) ; |d|=1}
\sgn_{\phi}(d) P_d^1 \wedge P_d^2 \\ = \sum_{d\not \in D(P)\cap
  \mathcal{S}}\sgn_{\phi}(d) P_d^1\wedge P_d^2 - \sum_{d \in D(P)\cap
  \mathcal{S}} \sgn_{\phi}(d) P_d^2 \wedge P_d^1 \\ = \D_{\phi'}(P)
\;.\eas Since $\Tsub{\phi'}$ is almost $\phi$ compatible,
$\partial_{\phi'} \circ\partial_{\phi'} = 0$.
\end{proof}

Almost compatible algebras are particularly important for
the calculations in section \ref{permutesection}. I give an example of
a such below.

\begin{dfn}
Let $re(P)$ be the set of non-trivial arrows ending on the root side of an
$R$-deco polygon $P$ (the \emph{r}oot \emph{e}nding arrows). To fix
notation, for $P$ an $n$-gon, write $re(P) = \{{_2}\alpha, \ldots
{_{n-1}}\alpha\}$, where ${_i}\alpha$ starts at the $i^{th}$ vertex.
 \end{dfn}

\begin{eg} 
I define a dual tree algebra, $\Tsub{\phi_{re}}$ generated by
the set of single rooted trees $\phi_{re}$. For any dissection $d \in
D(P)$, the root vertex of $T_{\phi_{re},d}(P) \in \phi_{re}$ is
labeled by the subpolygon that contains the original root side and
\emph{last} vertex of $P$. The edges of the generator are oriented to
flow away from the root vertex. Consider the edge corresponding to the
dissecting arrow $\alpha$. The initial vertex of $\alpha$ is labeled
by the subpolygon corresponding to the region closer to the last
vertex/root side of $P$. If $\alpha$ is a forwards arrow, then the
final vertex is labeled by the subpolygon corresponding to the region
further away. If $\alpha$ is a backwards arrow, then the final vertex
is labeled by the same polygon with reversed orientation. The
generator has an overall sign \bas \sgn(T_{\phi_{re},d}(P)) = (-1)^{| d\cap
  re(P)|}(-1)^{\sum_{\alpha \in d \textrm{ backwards}}\chi(\alpha)} =
(-1)^{| d\cap re(P)|} \sgn_{\phi_2}(d) \;.\eas \label{re}\end{eg}

The algebra $\Tsub{\phi_{re}}$ is almost $\phi_2$ compatible. The
difference set between $\phi_{re}$ and $\phi_2$ is \bas \mathcal{S} =
\bigcup_{P \; R-\textrm{deco}} re(P) \;.\eas If $\alpha$ is a
backwards arrow, then $\alpha \not \in re(P)$ by construction, and \bas
T_{\phi_{re}, \alpha}(P) = (-1)^{\chi(\alpha)}\xy \POS(2,4) *+{\bullet}
*\cir{} \POS(2,4) \ar@{-} +(0,-8) \POS(6,4) *{^{P_\alpha^\rt}}
\POS(2,-4) *+{\bullet} \POS(7,-4) *{^{\tau (P_\alpha^\rst)}}\endxy
\quad ; \quad T_{\phi_2, \alpha}(P) = (-1)^{\chi(\alpha)}\xy \POS(2,4)
*+{\bullet} *\cir{} \POS(2,4) \ar@{-} +(0,-8) \POS(6,4)
*{^{P_\alpha^\rt}} \POS(2,-4) *+{\bullet} \POS(7,-4)
*{^{\tau(P_\alpha^\rst)}} \endxy \;.\eas Similarly, if $\alpha$ is a
forwards arrow $\alpha \not \in re(P)$, \bas T_{\phi_{re}, \alpha}(P) = \xy
\POS(2,4) *+{\bullet} *\cir{} \POS(2,4) \ar@{-} +(0,-8) \POS(6,4)
*{^{P_\alpha^\rt}} \POS(2,-4) *+{\bullet} \POS(5,-4)
*{^{P_\alpha^\rst}}\endxy \quad ; \quad T_{\phi_2, \alpha}(P) = \xy
\POS(2,4) *+{\bullet} *\cir{} \POS(2,4) \ar@{-} +(0,-8) \POS(6,4)
*{^{P_\alpha^\rt}} \POS(2,-4) *+{\bullet} \POS(5,-4)
*{^{P_\alpha^\rst}} \endxy \;.\eas On the other hand, if $\alpha \in
re(P)$, \bas T_{\phi_{re}, \alpha}(P) = - \xy \POS(2,4) *+{\bullet} *\cir{}
\POS(2,4) \ar@{-} +(0,-8) \POS(6,4) *{^{P_\alpha^\rst}} \POS(2,-4)
*+{\bullet} \POS(5,-4) *{^{P_\alpha^\rt}}\endxy \quad ; \quad T_{\phi_2,
  \alpha}(P) = \xy \POS(2,4) *+{\bullet} *\cir{} \POS(2,4) \ar@{-}
+(0,-8) \POS(6,4) *{^{P_\alpha^\rt}} \POS(2,-4) *+{\bullet} \POS(5,-4)
*{^{P_\alpha^\rst}} \endxy \;.\eas To see that $\Tsub{\phi_{re}}$ is not a
dissection compatible Hopf algebra of $\T$, consider the
polynomial dissection pair \bas P = \xy \POS(0,4) *+{\bullet}
\POS(10,4) \ar@{=} +(-10,0)_6 \ar@{-} +(6,-6)^5 \POS(16,-2)\ar@{-}
+(-6,-6)^4 \POS(0,-8)\ar@{-} +(10,0)_3 \ar@{-} +(-6,6)^2
\POS(0,4)\ar@{-} +(-6,-6)_1 \POS(-6,-2) \ar^{\alpha}@{->} +(14,6)
\POS(10,4) \ar^{\beta}@{->} +(-12,-9) \POS(10,-8) \ar_{\gamma}@{->}
+(-11,1)
\hskip 40pt.
\endxy \eas In $\Tsub{\phi_{re}}$, the generator  \bas
 T_{\phi_{re},d} (P)=
\xy
\POS(-2,-2) *+{\bullet}
\POS(-2,-2) \ar^\alpha@{-} +(4,8)
\POS(-5,-2) *{^{16}}
\POS(2,6) *+{\bullet} *\cir{}
\POS(2,6) *+{\bullet}
\POS(2,6) \ar^\beta@{-} +(4,-8)
\POS(5,6) *{^{26}}
\POS(6,-2)  *+{\bullet}
\POS(6,-2) \ar_\gamma@{-} +(0,-8)
\POS(9,-2) *{^{542}}
\POS(6,-10)  *+{\bullet}
\POS(9,-10) *{^{32}}
\endxy \eas

Consider the admissible cut $c =\{\beta\}$. The corresponding summand
of the coproduct is \bas \Delta_c (T_{\phi_{re},d} (P)) = \xy \POS(2,4)
*+{\bullet} *\cir{} \POS(2,4) \ar@{-} +(0,-8) \POS(6,4) *{^{26}}
\POS(2,-4) *+{\bullet} \POS(5,-4) *{^{16}} \endxy \otimes \xy
\POS(2,4) *+{\bullet} *\cir{} \POS(2,4) \ar@{-} +(0,-8) \POS(6,4)
*{^{542}} \POS(2,-4) *+{\bullet} \POS(5,-4) *{^{32}} \endxy \;.\eas
While $\xy \POS(2,4) *+{\bullet} *\cir{} \POS(2,4) \ar@{-} +(0,-8)
\POS(6,4) *{^{26}} \POS(2,-4) *+{\bullet} \POS(5,-4) *{^{16}} \endxy =
T_{\phi_{re},\alpha}(126)$, $\xy \POS(2,4) *+{\bullet} *\cir{} \POS(2,4)
\ar@{-} +(0,-8) \POS(6,4) *{^{542}} \POS(2,-4) *+{\bullet} \POS(5,-4)
*{^{32}} \endxy = T_{\phi_2,\gamma}(5432)$. However,
the generator $T_{\phi_2, \gamma}(5432) \not \in
\Tsub{\phi_{re}}$.

\begin{lem}
The algebra $\Tsub{\phi_{re}}$ is not a dissection compatible Hopf algebra.
\end{lem}
\begin{proof}
In fact, I show that $\Tsub{\phi_{re}}$ is not a Hopf algebra at all.
Consider the generator $T_{\phi_{re},d}(P) \in \Tsub{\phi_{re}}$, and an admissible cut
consisting of a single arrow, $c = \alpha \not \in re(P)$. The
dissection $d \in D(P)$ can be decomposed into the sets \bas d = d^\rt
\cup c \cup d^\rst \eas where $d^\rt \in D(P^\rt_\alpha)$ and $d^\rst
\in D(P^\rst_\alpha)$. Since $c=\alpha \not \in re(P)$, the root side
of $P^\rst$ does not correspond to the root side of $P$. Furthermore,
$d^\rst \cap re(P) = \emptyset$. Thus the above discussion shows that
the corresponding summand in the coproduct is \bas \Delta_c(T_d(P)) =
T_{\phi_{re}, d^\rt}(P_\alpha^\rt) \otimes T_{\phi_2, d^\rst}(P_\alpha^\rst) \;. \eas If
$d^\rst \cap re(P_\alpha^\rst) \neq \emptyset$, then $T_{\phi_2,
  d^\rst}(P_\alpha^\rst) \not \in \Tsub{\phi_{re}}$, and $\Delta(T_d(P)) \not \in
\Tsub{\phi_{re}} \otimes \Tsub{\phi_{re}}$.
\end{proof}

A class of differentials on $\mcp$, \bas \{\D_\phi | \Tsub{\phi}
\textrm{ dissection compatible Hopf algebra}\}\; ,\eas defines a class
of bar complexes $\{B_{\D_{\phi}}(\mcp)\}$. Continuing to generalize
the construction of \cite{GGL05}, I associate to each $R$-deco
polygon an element of each $B_{\D_{\phi}}(\mcp)$.

\begin{dfn}
If $\Tsub{\phi}$ defines a differential $\D_\phi$, define \bas
\Lambda_{\phi} \quad : V_\bullet(R) & \rightarrow B_{\D_{\phi}}(\mcp)\\
 P & \rightarrow \sum_{d \in D(P)}
\Lambda (T_{\phi,d}(P)) \;.\eas
\end{dfn}

There is a natural way of identifying a subalgebra of $\bar{T}(V(R))$
that is generated by $R$-deco polygons

\begin{dfn}
A polygon algebra defined by the dual tree generating set $\phi$ is
\bas B_\phi = \Q[\{ \Lambda_\phi(P) | P \; \; R-\textrm{deco}\}] \;.\eas
Its generators are in one to one correspondence with $R$ -deco
polygons.
\end{dfn}

In general $B_\phi$ is a subalgebra of $\bar{T}((V(R))$

\begin{prop}
If the dual tree generating set $\phi$ defines a dissection compatible
Hopf algebra, then \bas B_{\phi} = \Q[\{\Lambda_{\phi}(P) | P \;
  R-\textrm{deco polygon}\}] \eas is a sub-Hopf algebra of
$\bar{T}(V(R))$. \end{prop}

\begin{proof}
This is evident from the fact that the linearization map $\Lambda: \T
\rightarrow T(V_\bullet(R))$ is a Hopf algebra homomorphism.
\end{proof}

For $\Tsub{\phi}$ a dissection compatible Hopf algebra, I show that
$\Lambda_{\phi}(P)$ is contained in the $0^{th}$ cocycles
$B_{\D_\phi}$, that is $\Lambda_\phi(P) \in H^0(B_{\D_{}} (\mcp)$.

\begin{thm}
Let the dual tree generating set $\phi$ generate a dissection
compatible Hopf algebra $\Tsub{\phi}$. Let $\D_\phi$ be the associated
differential. For $P$ an $R$-deco polygon, $\Lambda_{\phi}(P)$ is a
$0$ cocycle of $D_1 + D_2$ in
$B_{\D_{\phi}}(\mcp)$.  \label{Bcocycle}\end{thm}

\begin{proof}
Let $\pi_k$ be the projection of $\Lambda_{\phi}(P)$ onto its $k^{th}$
direct sum component, \bas \pi_k: \Lambda_{\phi}(P) \rightarrow
\mcp^{|k} \;. \eas

The elements of $\Lambda_{\phi}(P) \in
\bar{T}(\mathcal{P}_\bullet^{(1)}(R))$. In general, if all $a_i \in
P_\bullet^{(1)}(R)$, the differentials $D_1$ and $D_2$ are \bas
D_1([a_1| \ldots | a_n]) = \sum_{i=1}^{n-1} - [a_1| \ldots| a_i \wedge
  a_{i+1}| \ldots | a_n] \; \\ D_2([a_1| \ldots | a_n]) = \sum_{j=1}^n
[a_1| \ldots| \D_{\phi}(a_j)| \ldots | a_n] \; \eas

Let $P$ be a polygon of weight $n$. The term $\pi_n \circ
\Lambda_{\phi}(P)$ is a sum of $n$-fold tensors of $2$-gons. Therefore
$D_2(\pi_n\circ\Lambda_{\phi}(P))= 0$. Furthermore, the term
$D_1(\pi_1\circ \Lambda_{\phi}(P))= 0$ by construction.

For $d \in D(P)$ with $|d|= k-1$, the dual tree $T_{\phi,d}(P)$ has
$k$ vertexes, labeled by the set $\{P_d^1, \ldots, P_d^k\}$. Write
\bas \Lambda(T_{\phi,d}(P)) = \sgn_{\phi}(d)
\sum_{\textrm{Lin.}(T_{\phi,d}(P))} [P_d^{\lambda_1}| \ldots|
  P_d^{\lambda_k}] \;.\eas Comparing $D_1(\pi_k\circ
\Lambda_{\phi}(P))$ to $D_2(\pi_{k-1}\circ \Lambda_{\phi}(P))$ for $k
\in \{2, \ldots, n\}$ gives the expressions \bas D_1(\pi_k\circ
\Lambda_{\phi}(P)) = \sum_{\begin{subarray}{c} d\in
    D(P)\\ |d|=k-1\end{subarray}}\sum_{\textrm{Lin.}(T_{\phi,d}(P))}\sum_{i=1}^k
-\sgn_\phi(d) [P_d^{\lambda_1}| \ldots|P_d^{\lambda_i} \wedge
  P_d^{\lambda_{i+1}}| \ldots | P_d^{\lambda_k}] \eas and \bas
D_2(\pi_{k-1}\circ \Lambda_{\phi}(P))= \sum_{\begin{subarray}{c} d'\in
    D(P)\\ |d'|=k-2\end{subarray}}\sum_{\textrm{Lin.}(T_{\phi,
    d'}(P))} \sum_{i=1}^{k-1} \sgn_\phi(d') [P_{d'}^{\lambda'_1}|
  \ldots| \D_\phi P_{d'}^{\lambda'_i}| \ldots | P_{d'}^{\lambda'_k}]
\; .\eas If $P_d^{\lambda_i}$ and $P_d^{\lambda_{i+1}}$ are not
adjacent in $T_{\phi, d}(P)$, then there exists a unique partial order
preserving linearization $\rho \in \textrm{Lin}(T_{\phi,d}(P))$ that
switches only the order in which those two terms are written:
$P_d^{\lambda_i} = P_d^{\rho_{i+1}}$, $P_d^{\lambda_{i+1}} =
P_d^{\rho_i}$ and $P_d^{\lambda_j} = P_d^{\rho_j}$ if $j \not \in \{i,
\, i+1\}$.  Therefore the terms \bas[P_d^{\lambda_1}| \ldots|
  P_d^{\lambda_i} \wedge P_d^{\lambda_{i+1}}| \ldots |
  P_d^{\lambda_k}] + [P_d^{\rho_1}| \ldots| P_d^{\rho_i} \wedge
  P_d^{\rho_{i+1}}| \ldots | P_d^{\rho_k}] = 0 \eas in the sum for
$D_1$. In the remaining terms for $D_1$, the polygons $P_d^{
  \lambda_i}$ is adjacent to $P_d^{\lambda_{i+1}}$.

Consider $d \in D(P)$ such that $|d|= k-1$ and $d'= d \setminus
\alpha$.  For each such pair of dissections, $\alpha$ is a dissecting
arrow of $P_{d'}^{\lambda_i'}$ for some $i$. Therefore, the term \bas
\D_\phi(P_{d'}^{\lambda_i'}) = \sum_{\alpha \in
  D(P_{d'}^{\lambda_i'})} \sgn_{\phi}(\alpha)
(P_{d'}^{\lambda_i'})_\alpha^1 \wedge (P_{d'}^{\lambda_i'})_\alpha^2
\eas appears in the expression for $D_2$. The polygons
$(P_{d'}^{\lambda_i'})_\alpha^1$ and $(P_{d'}^{\lambda_i'})_\alpha^2$
are adjacent in $T_{\phi, d}(P)$. There is a unique linear order of
$T_{\phi,d}(P)$, $\lambda$, such that \bas P_d^{\lambda_j}
= \begin{cases} P_{d'}^{\lambda_j'} \textrm{ if } j <
  i\\ (P_{d'}^{\lambda_j'})_\alpha^1 \textrm{ if } j = i
  \\ (P_{d'}^{\lambda_i'})_\alpha^2 \textrm{ if } j = i+ 1
  \\ P_{d}^{\lambda_{j-1}'} \textrm{ if } j > i+1 \end{cases} \; .
\eas That is \bas [P_d^{\lambda_1'}| \ldots|
  (P_{d'}^{\lambda_i'})_\alpha^1 \wedge (P_{d'}
  ^{\lambda_i'})_\alpha^2 | \ldots | P_d^{\lambda_k'}] =
     [P_d^{\lambda_1}| \ldots| P_d^{\lambda_i} \wedge
       P_d^{\lambda_{i+1}}| \ldots | P_d^{\lambda_k}] .\eas Since the
     right hand side appears with the sign $(-\sgn_\phi(d))$ in
     $D_1(\Lambda_\phi(P))$ and the left hand side with the sign
     $\sgn_\phi(d')\sgn_\phi( \alpha)$ in $D_2(\Lambda_\phi(P))$ these
     terms cancel.
\end{proof}

The coproduct on the Hopf algebras $B_{\phi}$ has a particularly
nice form. If the dual tree generating set $\phi$ generates a
dissection compatible Hopf algebra $\Tsub{\phi}$, the duality between
dissections $d \in D(P)$ and edges of a generator $T_{\phi,d}(P)
\in \phi$ gives a concept of an admissible dissection for the polygon
$P$.

\begin{lem}
Let $\Tsub{\phi}$ be a dissection compatible Hopf algebra. The
following are equivalent : \begin{enumerate}
\item \label{one} The dissection $c \in D(P)$ has a generator, $T_{\phi,c}(P)$,
  with only leaf and root vertexes labeled $\{P_c^1, \ldots , P_c^{|c|+1}\}$.
\item \label{two} The dissection $c \in D(P)$ is an admissible cut of
  $T_{\phi, d}(P)$, for any $d = c \cup (\bigcup_1^n d_i)$. Furthermore, \bas
  \Delta_c(T_{\phi, d}(P)) = \sgn_\phi(c)\prod_{j=1}^m T_{\phi, d_j}(P_c^j)
  \otimes \prod_{k=m+1}^{|c|+1} T_{\phi, d_k}(P_c^k) \eas \end{enumerate}
\label{specialdissections}\end{lem}

\begin{proof}
For $\ref{two} \Rightarrow \ref{one}$, notice that from the definition
of admissible cut, if $c$ is an admissible cut of $T_{\phi,d}(P)$, the
generator $T_{\phi,c}(P)$ has only root and leaf vertexes. The vertex
labels $ \{P_c^i\}$ come from the fact that $\Tsub{\phi}$ is a
dissection compatible Hopf algebra.

For $\ref{one} \Rightarrow \ref{two}$, let $\{P_c^1, \ldots ,
P_c^{|c|+1}\}$ be the $R$-deco polygons decorating
$T_{\phi,c}(P)$. For $c \subseteq d$, let $\{T_{d_1}, \ldots,
T_{d_{|c|+1}}\}$ be the root and leaf subtrees of $T_{\phi,d}(P)$
formed by removing the edges in $c$. Since the subtrees $T_{d_i}$ are
either only connected to initial vertexes of the edges in $c$, or only
to the final vertexes, $c$ is an admissible cut of $T_{\phi,d}(P)$.
Therefore, \bas \Delta_c(T_{\phi,d}(P))= \sgn_\phi(d) \prod_{i=1}^m
T_{d_i} \otimes \prod_{j=m+1}^{|c|+1} T_{d_j} \;.\eas The dissection
$d \in D(P)$ can be written \bas d = c \cup (\bigcup_1^{|c|+1} d_i)
\;,\eas with $d_i$ corresponding to the edges in $T_{d_i}$. Since
$\Tsub{\phi}$ is a dissection compatible Hopf algebra, $d_i \in
D(P_i)$ and $\sgn_\phi(d_i) T_{d_i} = T_{\phi, d_i}(P_c^i)$, and
$\sgn_\phi(d)= \sgn_\phi(c) \prod_{i=1}^{|c|+1} \sgn_\phi(d_i)$.
\end{proof}

\begin{dfn}
The dissection $c \in D(P)$ is an admissible dissection of $P$ in
$\phi$ if the generator $T_{\phi,c}(P)$, has only leaf or root
vertexes. \label{admisdis} \end{dfn}

This definition is Hopf algebra (i.e. $\phi$) specific. Consider two
dissection compatible Hopf algebras, $\Tsub{\phi}$ and
$\Tsub{\phi'}$. An admissible cut of $T_{\phi,d}(P)$ need not be an
admissible cut of $T_{\phi',d}(P)$.  For instance, the dissection $d =
\{\alpha, \beta\}$ of the polygon \bas P = \xy \POS(0,4) *+{\bullet}
\POS(10,4) \ar@{=} +(-10,0)_4 \ar@{-} +(0,-10)^3 \POS(10,-6) \ar@{-}
+(-10,0)^2 \POS(0,4) \ar@{-} +(0,-10)_1 \POS(0,-6) \ar^{\alpha}@{->}
+(6,9) \POS(10,4) \ar^{\beta}@{->} +(-6,-9) \endxy \eas is an
admissible cut in $\Tsub{\phi_4}$ but not in $\Tsub{\phi_2}$.

\begin{thm}
Let $\Tsub{\phi}$ be a dissection compatible Hopf algebra. For $c$ an
admissible dissection of $P$ in $\phi$, let $\{P^1_c,\ldots,
P^m_c\}$ be the labels of roots vertexes, and
$\{P^{m+1}_c,\ldots,P^{|c|+1}_c\}$ the decorations of the leaf
vertexes. Then \bas \Delta ( \Lambda_{\phi}(P)) = \sum_{c \textrm{
    admis.}} \sha_{i=1}^m \Lambda_{\phi}(P^i_c) \otimes
\sha_{j=m+1}^{|c|+1} \Lambda_{\phi}(P^j_c) \; . \eas
\label{polydis}\end{thm}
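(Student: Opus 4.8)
The plan is to deduce this coproduct formula by combining the bialgebra homomorphism property of $\Lambda$ (Theorem \ref{bialghomo}) with the explicit form of the coproduct on $\T$ (Equation \eqref{treecoprod}) and the correspondence between admissible dissections of $P$ and certain admissible cuts of the trees $T_{\phi,d}(P)$. The key observation is that admissible dissections $c \in D(P)$ in the rule $\phi$ are precisely those whose associated trees $T_{\phi,c}(P)$ have depth one (only roots and leaves), and that such a $c$, viewed inside any $T_{\phi,d}(P)$ with $c \subseteq d$, cuts $T_{\phi,d}(P)$ into the pieces $T_{\phi, d_\rt}(P^j_c)$ and $T_{\phi, d_\rst}(P^k_c)$ indexed by the subpolygons $P^j_c$ of $c$. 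This is exactly the inductive decomposition already used in the Lemma establishing that $\Tsub{1},\Tsub{3},\Tsub{4}$ (and, from loc. cit., $\Tsub{2}$) are Hopf algebras.

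First I would apply $\Lambda$ to the definition $\phi(P) = \sum_{d \in D(P)} T_{\phi,d}(P)$ and use that $\Lambda$ is a coalgebra homomorphism:
\bas
\Delta(\Lambda_\phi(P)) = (\Lambda \otimes \Lambda)\bigl(\Delta \phi(P)\bigr) = \sum_{d \in D(P)} (\Lambda \otimes \Lambda)\bigl(\Delta T_{\phi,d}(P)\bigr) = \sum_{d \in D(P)} \sum_{c \textrm{ admis. cut of } T_{\phi,d}(P)} \Lambda(R(c)) \otimes \Lambda(L(c)) \;.
\eas
Next I would reorganize the double sum by grouping, for each admissible cut $c$ (now viewed as a sub-dissection of $P$), all $d \supseteq c$. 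The content here is that a set of edges $c$ in $T_{\phi,d}(P)$ is an admissible cut if and only if the corresponding sub-dissection of $P$ has $T_{\phi,c}(P)$ of depth one — equivalently, $c$ is an admissible dissection of $P$ — and that this condition is independent of the ambient $d$. Because the rule $\phi$ generates a sub-Hopf algebra, the pieces of the cut are themselves trees of the form $T_{\phi, d'}(P^j_c)$: the subpolygons $P^1_c,\ldots,P^i_c$ labelling root vertexes give the root forest, and $P^{i+1}_c,\ldots,P^n_c$ the pruned forest, with $d \setminus c$ distributing as independent dissections $d'_j \in D(P^j_c)$ across all the subpolygons. Hence summing over $d \supseteq c$ factors as $\prod_j \sum_{d'_j \in D(P^j_c)} T_{\phi, d'_j}(P^j_c) = \prod_j \phi(P^j_c)$.

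Finally I would apply $\Lambda$, which is an algebra homomorphism sending the disjoint-union product of forests to the shuffle product, so $\Lambda(\prod_{j=1}^i \phi(P^j_c)) = \sha_{j=1}^i \Lambda_\phi(P^j_c)$ and similarly for the leaf side; assembling gives the claimed formula. The main obstacle is the middle step: carefully establishing that admissible cuts of $T_{\phi,d}(P)$ are in bijection (compatibly, for all $d$ at once) with admissible dissections $c \subseteq d$ of $P$, and that the root/pruned forests split cleanly as products of $\phi(P^j_c)$ over the subpolygons with the residual dissection $d \setminus c$ freely distributed. This requires the rule $\phi$ to respect the subpolygon structure under contraction of arrows — exactly the property exploited in the Hopf-algebra lemma — so I would phrase this step as an induction on $|d \setminus c|$ mirroring that proof, checking that the orientation and sign assignments on the surviving edges are unchanged by passing to subpolygons.
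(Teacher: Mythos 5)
Your proposal follows the paper's own proof essentially step for step: apply the coalgebra homomorphism $\Lambda$ to $\phi(P)=\sum_{d}T_{\phi,d}(P)$, expand $\Delta T_{\phi,d}(P)$ over admissible cuts, identify these cuts with admissible dissections $c\subseteq d$ of $P$, regroup the double sum so that the residual dissections $d\setminus c$ factor as independent dissections of the subpolygons $P^j_c$, and convert forest products to shuffles via $\Lambda$. The extra care you propose for the middle step (the cut--dissection correspondence, argued by induction as in the Hopf-algebra lemma) is a reasonable elaboration of what the paper asserts more tersely, but it is the same argument.
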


\begin{proof}
Fix an admissible dissection $c$ of $P$ in $\Tsub{\phi}$. By lemma
\ref{specialdissections} and definition \ref{admisdis}, for any $d \in
D(P)$ such that $c \subset d$, write  $d=c\cup_{j=1}^{|c|+1}
d_i$ with $d_i \in D(P_c^i)$ and \bas \Delta T_{\phi,d}(P) = \sum_{ c
  \textrm{ admis.}}\prod_{i=1}^m T_{\phi, d_i}(P_c^i) \otimes
\prod_{j=m+1}^{|c|+1} T_{\phi, d_j}(P_c^j) \;. \eas

Write \bas \Delta \circ \Lambda_{\phi}(P) = \Delta \circ \Lambda
(\sum_{d \in D(P)}T_{\phi, d}(P)) = (\Lambda \otimes \Lambda)\circ
\Delta (\sum_{d \in D(P)}T_{\phi,d}(P))\;, \eas where the first
equality comes from the definition of $\Lambda_{\phi}(P)$ and the
second equality from theorem \ref{bialghomo}. Expanding
this, \begin{multline*} (\Lambda \otimes \Lambda) \Delta \sum_{d\in
    D(P)}T_{\phi,d}(P) = \\ (\Lambda \otimes \Lambda) (\sum_{d \in
    D(P)}\sum_{c \subset d \text{ admis.}}  \prod_{i=1}^m T_{\phi,
    d_i}(P_c^i) \otimes \prod_{j=m+1}^{|c|+1} T_{\phi, d_j}(P_c^j))
  \;.\end{multline*} Reorganizing terms and changing the order of
summation gives \bas \sum_{c \textrm{ admis.}}  \sha_{i=1}^m
\sum_{d_i \in D(P_c^i)}\Lambda(T_{\phi, d_i}(P^i_c)) \otimes
\sha_{j=m+1}^{|c|+1} \sum_{d_j \in D(P_c^j)}\Lambda(T_{\phi,
  d_j}(P^j_c)) = \\ \sum_{c \textrm{ admis.}}  \sha_{i=1}^m
\Lambda_{\phi}(P^i_c) \otimes \sha_{j=m+1}^{|c|+1}
\Lambda_{\phi}(P^j_c) \;.\eas
\end{proof}

\subsection{Properties of almost compatible algebras\label{props}}

In this subsection, I show that if the dual tree generating set $\psi$
defines an almost $\phi$ compatible algebra, $\Tsub{\psi}$, the
polygon algebra $B_\psi$ is a Hopf algebra, even if $\Tsub{\psi}$ is
not. First I show a relationship between the image of $\Lambda$ acting on general trees that differ on the orientation of the edges connecting certain
vertexes.

\begin{dfn}
Let $I$ be a subset of the edges of a tree $T$. Let $T^I$ be the tree
obtained from $T$ by reversing the orientation of the edges in $I$.
\end{dfn}

\begin{lem}
Let $T\in \T$ be a decorated multi-rooted tree. Let $I$ be a subset of
$n$ edges of the tree $T$. Let $F= \prod_{i=1}^{n+1} t_i$ be the forest
of multi-rooted trees created by removing the edges in $I$ in
$T$. Then \bas \sum_{J \subseteq I}\Lambda(T^J) =
\sha_{i=1}^{n+1} \Lambda(t_i) \; .\eas\label{trees}
\end{lem}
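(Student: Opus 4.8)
The plan is to induct on $n = |I|$, the number of edges we reverse/remove. The base case $n = 0$ is immediate: $I = \emptyset$, $F = T$, and the sum on the left has its single term $\Lambda(T)$. For the inductive step, I would isolate one edge $e \in I$ and split the sum $\sum_{k \in 2^I}$ according to whether $e$ is reversed or not. Writing $I' = I \setminus \{e\}$, this gives
\[
\sum_{k \in 2^I} \Lambda(T^k) = \sum_{k \in 2^{I'}} \Lambda(T^k) + \sum_{k \in 2^{I'}} \Lambda((T^{\{e\}})^k)\;.
\]
Now $e$ is still an edge of both $T$ and $T^{\{e\}}$, and $I'$ consists of edges of the forests obtained by deleting the remaining $n-1$ edges, so I would like to apply the inductive hypothesis to each of these two sums separately — but the cleanest route is instead to apply the inductive hypothesis to $I'$ first (reducing $T$ to the forest $F'$ obtained by removing only the edges of $I'$), and then handle the single remaining edge $e \in I$ inside that forest. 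So the real content is the single-edge case together with compatibility with the forest/shuffle structure.

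The single-edge case is the heart of the matter: if $T$ has one distinguished edge $e$ from vertex $u$ to vertex $v$, and $T_u, T_v$ are the two subtrees obtained by deleting $e$ (with $u \in T_u$, $v \in T_v$), I must show $\Lambda(T) + \Lambda(T^{\{e\}}) = \Lambda(T_u) \sha \Lambda(T_v)$. The right-hand side, by definition of $\Lambda$ and the shuffle product, is the sum over all interleavings of a linear order of $T_u$ with a linear order of $T_v$. The left-hand side: $\Lambda(T)$ is the sum of linear orders of $T$ respecting $u \prec v$ (together with all the order relations inside $T_u$ and inside $T_v$), while $\Lambda(T^{\{e\}})$ is the sum of linear orders respecting $v \prec u$. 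The key observation is that every partial order on the vertex set that restricts to the partial orders of $T_u$ and $T_v$ and that is otherwise unconstrained between the two vertex sets is exactly one of these two — because $T_u$ and $T_v$ share no edges, adding $e$ in either direction imposes no relation other than $u \lessgtr v$, and a linear order of the disjoint union that is not already a linear order of $T$ or of $T^{\{e\}}$ cannot exist: any such linear order either puts $u$ before $v$ (hence is a linear order of $T$) or after (hence of $T^{\{e\}}$). Conversely every shuffle interleaving arises this way, and no interleaving is counted twice since the conditions $u \prec v$ and $v \prec u$ are mutually exclusive. Hence $\Lambda(T) + \Lambda(T^{\{e\}}) = \Lambda(T_u) \sha \Lambda(T_v)$.

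For the inductive step proper, I would remove the edges of $I$ one at a time. After removing all but one, the inductive hypothesis (applied to the $n-1$ edges) rewrites $\sum_{k}\Lambda(T^k)$ as a shuffle of $\Lambda$'s of the components $F' = t_1' \cdots t_n'$ of $T$ cut along those $n-1$ edges, except that the last remaining edge $e$ of $I$ still lives inside one of these components, say $t_j'$, and we still have a residual sum over the two orientations of $e$. Applying the single-edge case to $t_j'$ — which is legitimate because $\Lambda$ is an algebra homomorphism, so $\Lambda$ of a forest is the shuffle of the $\Lambda$'s of its trees and the sum over orientations factors through — splits $t_j'$ into the two subtrees $t_j' $ is cut into by $e$. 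Reindexing, these together with the other $t_i'$ are exactly the $n+1$ components $t_1, \ldots, t_{n+1}$ of $F$, and the shuffle product is associative and commutative, so we land on $\sha_{i=1}^{n+1}\Lambda(t_i)$.

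The main obstacle I anticipate is bookkeeping rather than conceptual: making sure the single-edge argument is airtight in the multi-rooted setting, where a ``leaf'' may have several incoming edges and reversing $e$ can change which vertices are roots/leaves — but crucially it never changes the edge set of $T_u$ or $T_v$ nor the partial orders they induce, which is all that $\Lambda$ sees. One should also note that signs on the decorations play no role here: reversing an edge in $I$ does not alter vertex labels in the statement of this lemma (the sign conventions enter only later, when specific rules $\phi$ are compared), so $\Lambda$ being merely additive in each vertex label suffices.
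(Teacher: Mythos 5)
Your proof is correct and rests on the same combinatorial observation as the paper's: the linearizations of the cut forest are partitioned according to the relative position of the two endpoints of each cut edge, and each cell of that partition is exactly $\text{Lin}(T^k)$ for one $k\subseteq I$. The paper carries this out in a single step as a direct $2^n$-fold partition of $\Lambda(F)$ rather than inducting on $|I|$, but the content is identical, and your single-edge case is precisely the paper's argument specialized to $n=1$.
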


\begin{rem} The set $I$ need not be an admissible cut of $T$. \end{rem}

\begin{proof}
Write $I = \{e_1, \ldots, e_n\}$ with $v_{j1}$ and $v_{j2}$ the
endpoints of the edge $e_j$, such that $v_{j1}\prec v_{j2}$ in
$T$. If $e_j \in J$ then $v_{j2}\prec v_{j1}$ in $T^J$, and $v_{j1}
\prec v_{j2}$ if not.

If $t_k$ and $t_l$ are two trees in the forest $F$, the vertexes of
$t_k$ are incomparable to the vertexes of $t_l$ in $F$.  By
construction, each tree $t_k$ does not have both $v_{j1}$ and $v_{j2}$
as vertexes, for any $j$. Group $\Lambda(F)$ into sums of those terms
where $v_{j1}$ is to the left of $v_{j2}$ and sums of those where the
opposite is true. The relative positions of $v_{j2}$ and $v_{j1}$
correspond to the two orientations of the edge $e_j$. Since there are
2 choices for each pair, this divides the terms of $\Lambda(F)$ into
$2^n$ sums. This groups $\Lambda(F)$ into the sums in the statement of
the lemma.
\end{proof}

\begin{eg}
\begin{enumerate}
\item If $I = \{e\}$ is a single edge of a tree $T$, removing $e$ gives two subtrees, $\{R,P\}$. Under this notation,
  \bas \Lambda(T)+\Lambda(T^e)= \Lambda(R) \sha \Lambda(P)\; \eas
\item Let $T$ be multi-rooted tree formed by
  connecting the trees in the forest $F = \prod_{i=1}^n T_i$ to a new
  root with label $s$ at the vertex $v_i$ of $T_i$. For $n=3$, \bas T = \xy
  \POS(0,3) *+{\bullet} *\cir{} \POS(0,3) \ar@{-} +(-6,-6) \ar@{-}
  +(0,-6) \ar@{-} +(6,-6) \POS(0,5) *{^{s}} \POS(-6,-3) *+{\bullet}
  \POS(0,-3) *+{\bullet} \POS(6,-3) *+{\bullet} \POS(-6,-5) *{^{T_1}}
  \POS(0,-5) *{^{T_2}} \POS(6, -5) *{^{T_3}} \endxy \;.\eas  Define $I =
  \{e_1, e_2, e_3\}$ to be the edges that connect $s$ to $v_i$. Then \bas
  \Lambda(T) + \Lambda( \xy \POS(0,6) *+{\bullet} *\cir{} \POS(0,6)
  \ar@{-} +(0,-6) \POS(0,8) *{^{T_1}} \POS(0,0) *+{\bullet} \POS(-2,0)
  *+{^s} \POS(0,0) \ar@{-} +(6,-6) \ar@{-} +(-6,-6) \POS(-6,-6)
  *+{\bullet} \POS(6,-6) *+{\bullet} \POS(6,-8) *{_{T_2}} \POS(-6, -8)
  *{_{T_3}} \endxy) + \Lambda( \xy \POS(0,6) *+{\bullet} *\cir{}
  \POS(0,6) \ar@{-} +(0,-6) \POS(0,8) *{^{T_2}} \POS(0,0) *+{\bullet}
  \POS(-2,0) *+{^s} \POS(0,0) \ar@{-} +(6,-6) \ar@{-} +(-6,-6)
  \POS(-6,-6) *+{\bullet} \POS(6,-6) *+{\bullet} \POS(6,-8) *{^{T_1}}
  \POS(-6, -8) *{^{T_3}} \endxy) + \Lambda( \xy \POS(0,6) *+{\bullet}
  *\cir{} \POS(0,6) \ar@{-} +(0,-6) \POS(0,8) *{^{T_3}} \POS(0,0)
  *+{\bullet} \POS(-2,0) *+{^s} \POS(0,0) \ar@{-} +(6,-6) \ar@{-}
  +(-6,-6) \POS(-6,-6) *+{\bullet} \POS(6,-6) *+{\bullet} \POS(6,-8)
  *{_{T_1}} \POS(-6, -8) *{_{T_2}} \endxy) + \Lambda( \xy \POS(-6,6)
  *+{\bullet} *\cir{} \POS(-6,6) \ar@{-} +(6,-6) \POS(-6,8) *{^{T_1}}
  \POS(6,6) *+{\bullet} *\cir{} \POS(6,6) \ar@{-} +(-6,-6) \POS(6,8)
  *{^{T_3}} \POS(0,0) *+{\bullet} \POS(-2,0) *+{^s} \POS(0,0) \ar@{-}
  +(0,-6) \POS(0,-6) *+{\bullet} \POS(0,-8) *{_{T_2}} \endxy) +\\
  \Lambda( \xy \POS(-6,6) *+{\bullet} *\cir{} \POS(-6,6) \ar@{-}
  +(6,-6) \POS(-6,8) *{^{T_3}} \POS(6,6) *+{\bullet}*\cir{} \POS(6,6)
  \ar@{-} +(-6,-6) \POS(6,8) *{^{T_2}} \POS(0,0) *+{\bullet}
  \POS(-2,0) *+{^s} \POS(0,0) \ar@{-} +(0,-6) \POS(0,-6) *+{\bullet}
  \POS(0,-8) *{^{T_1}} \endxy) +  \Lambda( \xy \POS(-6,6)
  *+{\bullet} *\cir{} \POS(-6,6) \ar@{-} +(6,-6) \POS(-6,8) *{^{T_2}}
  \POS(6,6) *+{\bullet} *\cir{} \POS(6,6) \ar@{-} +(-6,-6) \POS(6,8)
  *{^{T_1}} \POS(0,0) *+{\bullet} \POS(-2,0) *+{^s} \POS(0,0) \ar@{-}
  +(0,-6) \POS(0,-6) *+{\bullet} \POS(0,-8) *{_{T_3}} \endxy) +
  \Lambda( \xy \POS(0,-3) *+{\bullet} \POS(0,-3) \ar@{-} +(-6,6)
  \ar@{-} +(0,6) \ar@{-} +(6,6) \POS(0,-5) *{_{s}} \POS(-6,3)
  *+{\bullet} *\cir{} \POS(0,3) *+{\bullet} *\cir{} \POS(6,3)
  *+{\bullet} *\cir{} \POS(-6,5) *{^{T_1}} \POS(0,5) *{^{T_2}}
  \POS(6,5) *{^{T_3}} \endxy) \\ = [s \sha \Lambda(T_1) \sha \Lambda(T_2)
    \sha \Lambda(T_3)]\; . \eas 
\end{enumerate}

\end{eg}

I use lemma \ref{trees} to show a relationship between the polygon
algebras $B_\phi$ and $B_\psi$, where $\Tsub{\psi}$ is almost
$\phi$ compatible, and $\Tsub{\phi}$ is a dissection compatible
Hopf algebra.

\begin{thm}
Let $\phi$ and $\psi$ be two dual tree generating sets that define the
same differential $\D_\phi= \D_\psi$. Let $\Tsub{\phi}$ be a
dissection compatible Hopf algebra and $\Tsub{\psi}$ an almost $\phi$
compatible algebra. Let $\mathcal{S}$ be their difference set. Let $P$
be an $R$-deco polygon. For $d \in D(P)$, let $\{P^1_d, \ldots ,
P^{|d|+1}_d\}$ be the polygons decorating $T_{\phi, d}(P)$. Then \ba
\Lambda_\psi(P) - \Lambda_\phi(P) =  \sum_{\begin{subarray}{c}d
     \subset \mathcal{S}  \\
    \emptyset \neq d \in D(P) \end{subarray}} (-1)^{|d|}
\sgn_\phi(d)\sha_{j=1}^{|d|+1} \Lambda_\phi(P_d^j)
\;. \label{edgeswitchrel} \ea for all $P \in
V_\bullet(R)$. \label{edgeswitch} \end{thm}

\begin{proof}
For any $d \in D(P)$, let $I(d) = d \cap \mathcal{S}$.  Write the
generator \bas T_{\phi, d}(P) = \sgn_\phi(d) T\;.\eas Using the
notation from lemma \ref{trees}, \bas T^{I(d)}_{\phi, d}(P) =
\sgn_\phi(d) T^{I(d)} \;.\eas By the definition of almost $\phi$
compatible algebras, \bas (-1)^{|I(d)|} T_{\phi, d}(P) =
T^{I(d)}_{\psi, d}(P)\; \eas and \ba T_{\phi, d}(P) = T_{\psi, d}(P)
\Leftrightarrow I(d) = \emptyset \; \label{noedgeswitch}.\ea Write the
left hand side of \eqref{edgeswitchrel} as \bas \sum_{d' \in D(P)}
\left( (-1)^{|I(d')|}\Lambda(T_{\phi, d'}^{I(d')}(P))-
\Lambda(T^\emptyset_{\phi, d'}(P)) \right) \;. \eas By
equation \eqref{noedgeswitch}, I can ignore the
dissections $d'$ that don't intersect the set $\mathcal{S}$, \ba \sum_{d' \in D(P), I(d') \neq
  \emptyset}\left( (-1)^{|I(d')|} \Lambda (T_{\phi, d'}^{I(d')}(P)) -
\Lambda (T^\emptyset_{\phi, d'}(P)) \right) \;.\label{LHS}\ea

Write the right hand side of \eqref{edgeswitchrel} as \ba
\sum_{\begin{subarray}{c}d \subset \mathcal{S} \\ \emptyset \neq d \in
    D(P) \end{subarray}} (-1)^{|d|}\sgn_\phi(d) \sha_{j=1}^{|d|+1}
\sum_{d_j \in D(P_d^j)} \Lambda(T_{\phi, d_j}(P_d^j)) \;
,\label{rhs1}\ea Since $\Tsub{\phi}$ is a dissection compatible Hopf
algebra, write any subdissection $d \subset d' \in D(P)$, as $d' = d
\cup (\cup_{j=1}^{|d|+1} d_j)$ with $d_j \in D(P_d^j)$. The forest
\bas \prod_{j=1}^{|d|+1}T_{\phi, d_j}(P_d^j)\eas comes from to cutting
$T_{\phi, d'}(P)$ at the edges corresponding to $d$. Furthermore, \bas
\sgn_\phi(P,d') = \sgn_\phi(P,d)(\prod_{j=1}^{d+1} \sgn_\phi(P_d^j,
d_j)) \;. \eas Using lemma \ref{trees}, rewrite equation \eqref{rhs1}
as \ba \sum_{\begin{subarray}{c}d \subset \mathcal{S} \\ \emptyset
    \neq d \in D(P) \end{subarray}} \sum_{d\subseteq d'\in D(P)}
(-1)^{|d|} \left( \sum_{\delta \subseteq d} \Lambda(T^\delta _{\phi,
  d'}(P))\right) \; .  \label{rhs2}\ea I do not consider dissections
$d'$ such that $d' \cap \mathcal{S} = I(d') = \emptyset$, since this
implies $\delta = d = 0$. I reorganize the expression in \eqref{rhs2}
to \bas \sum_{\begin{subarray}{c} d'\in D(P)\\ I(d') \neq
    \emptyset \end{subarray}} \sum_{\begin{subarray}{c} \emptyset \neq
    d \subseteq I(d') \\ \delta \subseteq d \end{subarray}}(-1)^{ |d|}
\Lambda(T^\delta_{\phi, d'}(P)) = \\ \sum_{\begin{subarray}{c} d'\in
    D(P)\\ I(d') \neq \emptyset \end{subarray}} \sum_{\delta}
\sum_{ i = \max \{1,|\delta|\}}^{|I(d')|} (-1)^i
    {|I(d')\setminus \delta| \choose i - |\delta|}
    \Lambda(T^\delta_{\phi, d'}(P)) \;. \eas For any fixed $\delta
    \not \in \{ \emptyset, I(d')\}$, this expression vanishes, leaving
    \bas \sum_{\begin{subarray}{c} d'\in D(P)\\ I(d') \neq
        \emptyset \end{subarray}} \left(
    (-1)^{|I(d')|}\Lambda(T^{I(d')}_{\phi, d'}(P)) -
    \Lambda(T^\emptyset_{\phi, d'}(P))\right) \;, \eas to match
    \eqref{LHS}. The first term corresponds to the case where $\delta=
     I(d')$. The second term corresponds to the case where $\delta =
    \emptyset$.
\end{proof}

The following corollary shows that that $B_\phi$ and $B_\psi$ are
isomorphic as Hopf algebras. The generators $\{\Lambda_\phi(P)\}$
and $\{\Lambda_\psi(P)\}$ define different bases of this vector space
underlying $B_\phi$.

\begin{cor}
Let $\phi$, $\psi$ and $\mathcal{S}$ be as in theorem \ref{edgeswitch}.  Then
$B_\psi \simeq B_\phi$ as Hopf algebras  \end{cor}

\begin{proof}
The product and coproduct structure on $B_\psi$ is induced by the
product and coproduct structures on $B_\phi$ and equation
\eqref{edgeswitchrel}. Therefore $B_\psi$ is a Hopf algebra. In fact,
equation \eqref{edgeswitchrel} shows that every generator,
$\Lambda_\psi(P)$ of $B_\psi$, can be written in terms of sums of
shuffles of generators of $B_\phi$. It remains to show that the map
defined by this equation can be inverted.

By theorem \ref{edgeswitch}, write \bas \Lambda_\phi(P) =
\Lambda_\psi(P) - \sum_{\begin{subarray}{c}d \subset \mathcal{S}
    \\ \emptyset \neq d \in D(P) \end{subarray}}
(-1)^{|d|}\sgn_\phi(d) \sha_{j=1}^{|d|+1} \Lambda_\phi(P_d^j) \;. \eas
where $\{P_d^1, \ldots, P_d^{|d|+1|}\}$ is the set of polygons
decorating $T_{\phi,d}(P)$ and $T_{\psi, d}(P)$. If $P \in V_1(R)$,
that is, it is a 2-gon, \bas \Lambda_\phi(P) = \Lambda_\psi(P)
\;. \eas If $P \in V_2(R)$, \bas \Lambda_\phi(P) = \Lambda_\psi(P)+
\sum_{\begin{subarray}{c}d \subset \mathcal{S} \\ \emptyset \neq d \in
    D(P) \end{subarray}} \sgn_\phi(d) \Lambda_\phi(P_d^1)\sha
\Lambda_\phi(P_d^2) \eas where $P_d^1, P_d^2 \in V_1(R)$. Therefore,
\bas \Lambda_\phi(P) = \Lambda_\psi(P) + \sum_{\begin{subarray}{c}d
    \subset \mathcal{S} \\ \emptyset \neq d \in D(P) \end{subarray}}
\sgn_\phi(d) \Lambda_\psi(P_d^1)\sha \Lambda_\psi(P_d^2) \;.  \eas By
induction, suppose $\Lambda_\phi(P) \in B_\psi$ for all $P$ of weight
less than $n$. If $P \in V_n(R)$, \bas \Lambda_\phi(P) =
\Lambda_\psi(P) - \sum_{\begin{subarray}{c}d \subset \mathcal{S} \\ \emptyset
      \neq d \in D(P) \end{subarray}} (-1)^{|d|}\sgn_\phi(d)
\sha_{j=1}^{|d|+1} \Lambda_\phi(P_d^j) \;. \eas Since each
$P_d^j$ has weight less than $n$, $\Lambda_\phi(P_d^j) \in
B_\psi$. Thus $\Lambda_\phi(P)$ can be written in terms of sums of
shuffles of elements in $B_\psi$. Therefore, $\Lambda_\phi(P) \in
B_\psi$, and there is a one to one correspondence between the
generators of $B_\phi$ and $B_\psi$.  \end{proof}

Under these conditions, if, for all polygon dissection pairs $(P,d)$
every tree of the form $T_{\phi, d \cap \mathcal{S}}(P)$ is linear, then the
result of theorem \ref{edgeswitch} simplifies greatly.

\begin{cor}
If in addition to the conditions for $\phi$, $\psi$ and $\mathcal{S}$
above, $T_{\phi, d\cap \mathcal{S}}(P)$ is a linear tree with sign, for all $P \in
V_\bullet(R)$, then \bas \Lambda_\phi(P) - \Lambda_\psi(P) =
\sum_{\alpha\in \mathcal{S}}\sgn_\phi(\alpha) \Lambda_\phi
(P_{\alpha}^1)\sha \Lambda_\psi (P_{\alpha}^2)\;,
\eas\label{lineartrees}
\end{cor}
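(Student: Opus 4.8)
The plan is to derive Corollary \ref{lineartrees} directly from Theorem \ref{edgeswitch} by observing that the hypothesis ``$T_{\phi,d}(P)$ is linear whenever $d\subseteq s$'' forces the dissections appearing in \eqref{edgeswitchrel} to be essentially trivial. Concretely, I would start from the formula
\bas
\Lambda_\phi(P) - \Lambda_\psi(P) = \sum_{d\subseteq s,\, d\in D(P)} (-1)^{|d|}\sgn_\phi(d)\,\sha_{j=1}^{|d|+1}\Lambda_\phi(P_d^j)
\eas
and analyze the terms by the cardinality $|d|$. The term $|d|=0$ is the empty dissection, contributing $\Lambda_\phi(P)$, which cancels the $\Lambda_\phi(P)$ on the left; rearranging gives
\bas
\Lambda_\psi(P) - \Lambda_\phi(P) = -\sum_{d\subseteq s,\, |d|\geq 1} (-1)^{|d|}\sgn_\phi(d)\,\sha_{j=1}^{|d|+1}\Lambda_\phi(P_d^j).
\eas
The claim is then that only the $|d|=1$ terms survive.

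The key step is to show that every $d\subseteq s$ with $|d|\geq 2$ contributes zero. Here I would use the linearity hypothesis on $T_{\phi,d}(P)$: if $d=\{\alpha_1,\dots,\alpha_k\}\subseteq s$ and $T_{\phi,d}(P)$ is a linear (single-chain) tree, then removing all $k$ edges of $d$ yields a forest of $k+1$ subpolygons that are linearly ordered as vertices in a chain, but the subpolygons $P_d^j$ are still genuinely subpolygons obtained by contracting the arrows. The point is to show that for $k\geq 2$, the corresponding subpolygons $P_{\alpha_i}^1$ and $P_{\alpha_i}^2$ from the individual $|d|=1$ cuts assemble consistently, so that the higher $|d|$ contributions are forced to cancel against each other — or more precisely, that the inductive inverse structure described after Theorem \ref{edgeswitch} collapses. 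Alternatively, and I think more cleanly, I would argue: because $T_{\phi,d}(P)$ is linear for all $d\subseteq s$, the arrows in $s$ are pairwise "non-crossing along the chain", and one shows directly that whenever $\alpha,\beta\in s$ are both dissecting arrows of $P$, at most one of them can appear, i.e.\ any $d\subseteq s$ with $d\in D(P)$ and $|d|\geq 2$ would force $T_{\phi,d}(P)$ to be non-linear after further dissection — contradicting the hypothesis's uniformity over all $P\in V(R)$. This reduces the sum to $d=\{\alpha\}$ for $\alpha\in s$, with $(-1)^{|d|}=-1$ and $\sgn_\phi(\{\alpha\})=\sgn_\phi(\alpha)$, giving
\bas
\Lambda_\psi(P) - \Lambda_\phi(P) = \sum_{\alpha\in s} -\sgn_\phi(\alpha)\,\Lambda_\phi(P_\alpha^1)\sha\Lambda_\phi(P_\alpha^2).
\eas

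To finish, I would replace $\Lambda_\phi(P_\alpha^1)$ by $\Lambda_\psi(P_\alpha^1)$ in the surviving terms. This is justified because the root polygon $P_\alpha^1$ has strictly smaller weight than $P$, so by induction on weight we may assume the corollary (and in particular the agreement of $\Lambda_\phi$ and $\Lambda_\psi$ up to the correction terms) holds for it; but the correction terms for $P_\alpha^1$ involve arrows in $s$ lying inside $P_\alpha^1$, and by the linearity hypothesis those contribute only through the same mechanism, so at the level needed for the stated formula (which mixes $\Lambda_\psi$ on the first factor and $\Lambda_\phi$ on the second) the substitution is exactly the bookkeeping that makes both sides match. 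The main obstacle I anticipate is the second paragraph: carefully justifying that no $d\subseteq s$ with $|d|\geq 2$ can be a dissection of $P$ (equivalently, that all such higher terms vanish) requires unpacking what "linear for all $P\in V(R)$" really constrains about the geometry of arrows in $s$, and making sure the argument is not circular with the inductive step. Everything else is sign-tracking and an appeal to Theorem \ref{edgeswitch}.
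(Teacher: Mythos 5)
Your central step---that every $d\subseteq s$ with $|d|\geq 2$ contributes zero---is false, and the argument you sketch for it (``any $d\subseteq s$ with $|d|\geq 2$ would force $T_{\phi,d}(P)$ to be non-linear'') directly contradicts the hypothesis of the corollary, which assumes $T_{\phi,d}(P)$ \emph{is} linear for every $d\subseteq s$ with $d\in D(P)$, including those with several arrows. In the main application (Lemma \ref{redif}, where $s=re(P)$), every subset of $re(P)$ is a legitimate dissection and its tree is a linear chain; those higher-order terms are plentiful and nonzero. So the reduction to single-arrow dissections does not happen, and the formula you reach at the end of your second paragraph is not the content of the corollary: it still has $\Lambda_\phi(P_\alpha^1)\sha\Lambda_\phi(P_\alpha^2)$ on the right, and the subsequent ``replace $\Lambda_\phi$ by $\Lambda_\psi$ by induction on weight'' is exactly where the discarded terms would have been needed. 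Without them the substitution is not justified.

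The correct mechanism, which is what the paper does, is to keep all nonempty $d\subseteq s$ and group them by the arrow $\alpha\in d$ dual to the unique edge attached to the root of the linear chain $T_{\phi,d}(P)$. Linearity guarantees such an $\alpha$ exists and that $d\setminus\alpha$ is a dissection of the subpolygon $P_\alpha^2$ contained in $s$. Summing over all $d$ in the class of a fixed $\alpha$, the $|d|\geq 2$ terms assemble---by Theorem \ref{edgeswitch} applied to $P_\alpha^2$---into precisely $\Lambda_\psi(P_\alpha^2)-\Lambda_\phi(P_\alpha^2)$, which added to the $|d|=1$ term $\Lambda_\phi(P_\alpha^2)$ yields $\Lambda_\psi(P_\alpha^2)$ in the second shuffle factor. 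In other words, the higher terms are not zero; they are exactly the correction that upgrades one of the two factors from $\Lambda_\phi$ to $\Lambda_\psi$. Your proposal identifies the right starting point (Theorem \ref{edgeswitch}) and the right bookkeeping for $|d|\in\{0,1\}$, but the disposal of the remaining terms is a genuine gap, not a technicality.
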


\begin{proof}
For $P$ an $R$-deco polygon, fix an $\alpha \in D(P)$ such that
$\alpha \in \mathcal{S}$. Consider all $d \subset \mathcal{S}$ such
that $\alpha$ is dual to the edge attached to the root in $T_{\phi,
  d}(P)$.  Then the dissection $\{d \setminus \alpha \} \in
D(P_\alpha^2)$.  Let \bas \rho_\alpha = \{\emptyset \neq d \in D(P) | d \subset
\mathcal{S}, \alpha \in d, P_\alpha^1 \textrm{ root label of }
T_{\phi, d}(P) \}\eas be the set of all such $d$. Let $ \{P_d^1,
\ldots,P_d^{|d|+1}\}$ be the polygons decorating the generator
$T_{\phi,d}(P)$ enumerated such that $P_\alpha^1 = P_d^1$ labels the
vertex of $T_{\phi, d}(P)$. From theorem \ref{edgeswitch}, \begin{multline}
\Lambda_\phi(P) - \Lambda_\psi(P) =  \\-\sum_{\emptyset \neq d \subseteq
  \mathcal{S}\cap D(P)} (-1)^{|d|}\sgn_\phi(d) \sha_{j=1}^{|d|+1}\Lambda_\phi
(P^j_d) = \\ -\sum_{\alpha\in \mathcal{S}\cap D(P)} \sum_{ d \in
  \rho_\alpha}(-1)^{|d|}\sgn_\phi(d) \Lambda_\phi (P_\alpha^1)
\sha_{j=2}^{|d|+1} \Lambda_\phi (P^j_d) \; .\label{intermlinres}\end{multline} If
$d= \alpha$, then $P^2_d = P^2_\alpha$. Break the sum in the last line
of equation \eqref{intermlinres} as \begin{multline*} \sum_{\alpha\in
  \mathcal{S}\cap D(P)}\sgn_\phi(\alpha) \Lambda_\phi (P_\alpha^1) \sha
\left(\Lambda_\phi (P_\alpha^2) + \right.\\ \left. \sum_{ d \in \rho_\alpha, d\neq
  \alpha} (-1)^{|d|-1}\sgn_\phi(d \setminus \alpha))
\sha_{j=2}^{|d|+1}\Lambda_\phi (P^j_d) \right)\;. \end{multline*} Since
$d\setminus \alpha \in D(P_\alpha^2)$, by theorem \ref{edgeswitch}
\bas \Lambda_\psi (P_\alpha^2) = \Lambda_\phi(P_\alpha^2) + \sum_{ d
  \in \rho_\alpha, d\neq \alpha} (-1)^{|d|-1}\sgn_\phi(d \setminus
\alpha)) \sha_{j=2}^{|d|+1}\Lambda_\phi (P^j_d) \;. \eas This gives
\bas \Lambda_\phi(P)-\Lambda_\psi(P) = \sum_{\alpha\in
  \mathcal{S}\in D(P)}\sgn_\phi(\alpha) \Lambda_\phi (P_\alpha^1) \sha
\Lambda_\psi (P_\alpha^2)\;. \eas
\end{proof}

\begin{cor}
Under the same conditions as the previous corollary, one can also
write \bas \Lambda_\phi(P) - \Lambda_\psi(P) = \sum_{\alpha \in
  \mathcal{S}\cap D(P)}\sgn_\phi(\alpha) \Lambda_\psi (P_\alpha^1) \sha \Lambda_\phi
(P_\alpha^2)\;. \eas\label{lineartreesleaf}
\end{cor}

\begin{proof}
This result comes from the same argument as above, replacing
the edge connected to the single root vertex of $T_{\phi,d}(P)$, for
$d \subset \mathcal{S}$ with the edge connected to the single leaf vertex.
\end{proof}

\section{Permutations of a polygon\label{permutesection}}

In this section, I examine the actions of $\sigma$ and $\tau$ on the
Hopf algebra $\Lambda_{\phi_2}$. Recall that $\sigma$ and $\tau$ are
linear automorphisms on $V_\bullet(R)$ such that for $P = 12\ldots n$,
$\tau(P) = (n-1)\ldots 21n$ reverses the orientation of $P$ and
$\sigma(P) = 2\ldots n1$ rotates the labels of the edges one
position. Restricted to the sub-vector space $V_n(R)$,
$\sigma|_{V_n(R)}$ and $\tau|_{V_n(R)}$ generate the dihedral group
$D_{2n+2}$. I can extend $\sigma$ and $\tau$ to automorphisms of
$B_{\phi_2}$ by defining $\sigma(\Lambda_{\phi_2}(P)) =
\Lambda_{\phi_2}(\sigma P))$ and $\tau(\Lambda_{\phi_2}(P)) =
\Lambda_{\phi_2}(\tau P)$. After defining relations between
$\Lambda_{\phi_2}(\sigma P)$, $\Lambda_{\phi_2}(\tau P)$ and
$\Lambda_{\phi_2}(P)$, one can apply the coalgebra homomorphism \bas
\Phi : \Lambda(\Tsub2) \rightarrow \mathcal{I}_\bullet(R)\eas defined in
\cite{GGL05} to establish relationships between iterated integrals
with the appropriate dihedral action on the arguments.

\subsection{Order $2$ generator of the dihedral group}

First I calculate $\Lambda_{\phi_2}(P) \pm \Lambda_{\phi_2}(\tau
P)$. Since $\tau$ fixes the label of the root side of the polygon $P$,
it is useful to examine an almost $\phi_2$ compatible algebra
$\Tsub{\psi}$ such that the difference set between $\phi_2$ and $\psi$
consists of arrows ending on the root side. This is exactly the
algebra $\Tsub{\phi_{re}}$ discussed in example \ref{re}.

\begin{lem}
If $P$ is an $R$-deco polygon of weight $n$, \ba \Lambda_{\phi_2}(P)-
\Lambda_{\phi_{re}}(P) = \sum_{i=2}^{n} \Lambda_{\phi_2} (P_{_i\alpha}^\rt)) \sha
\Lambda_{\phi_{re}} (P_{_i\alpha}^\rst)\; . \label{redifeq}\ea \label{redif}
\end{lem}

\begin{proof}
For $i < j$, ${_j}\alpha \in D(P_{_i\alpha}^\rst)$ and $_i\alpha \in
D(P^\rt_{_j\alpha})$. If $d = \{_i\alpha, {_j}\alpha\} \in D(P)$,
$T_{\phi_{re}, d}(P)$ and $T_{\phi_2, d}(P)$ are linear, as are the
trees for and dissection $d \subset re(P)$. The arrows ${_j}\alpha$ are
forwards, so $\sgn_{\phi_2}({_j}\alpha) = 1$. The result follows from
corollary \ref{lineartrees}.
\end{proof}

Extend the action of $\tau$ to dissecting arrows and their associated
sets of subpolygons. 

Let $\alpha$ be a dissecting arrow of $P$, a polygon of weight $n$.
Using the notation in definition \ref{arrowdef}, if $\alpha \not \in
re(P)$, write $\alpha = {_i}\alpha_j$ (for $j \neq n+1$). The map
$\tau$ sends the polygon $P$ to $\tau(P)$ and the arrow ${_i}\alpha_j$
to $\tau\alpha = {_{n-i+2}}\alpha_{n-j+1} \in D(\tau(P))$. For a root
ending arrow $\alpha = {_i}\alpha_{n+1} \in re(P)$, $\tau\alpha =
{_{n-i+2}}\alpha_{n+1} \in D(\tau(P))$. For a forward (backward) arrow
$\alpha \not \in re(P)$, the arrow $\tau\alpha$ is backward
(forward). The map $\tau$ take $re(P)$ to itself. All arrows in
$re(P)$ are forward. The following is an example for $P$ of weight $5$.

\begin{eg}
Recall that I use the shorthand ${_i}\alpha$ to indicate the root
ending arrow ${_i}\alpha_{n+1} \in re(P)$ (for $ P \in V_n(R)$). Let
$P= 123456$, and $d = \{{_3}\alpha, \beta\}$. Then
$\tau{d}= \{{_4}\alpha, \tau\beta\}$. Below are diagrams of $P$ and
$\tau P$ with the dissections $d$ and $\tau d$ drawn in.

\bas 
P=
\xy
\POS(0,4) *+{\bullet}
\POS(10,4) \ar@{=} +(-10,0)_6
\ar@{-} +(6,-6)^5
\POS(16,-2)\ar@{-} +(-6,-6)^4
\POS(0,-8)\ar@{-} +(10,0)_3
\ar@{-} +(-6,6)^2
\POS(0,4)\ar@{-} +(-6,-6)_1
\POS(0,-8) \ar^{{_3}\alpha}@{->} +(6,11)
\POS(0,-8) \ar_{\beta}@{->} +(12, 8)
\endxy
\quad \quad \tau{P}= 
\xy
\POS(0,4) *+{\bullet}
\POS(10,4) \ar@{=} +(-10,0)_6
\ar@{-} +(6,-6)^1
\POS(16,-2)\ar@{-} +(-6,-6)^2
\POS(0,-8)\ar@{-} +(10,0)_3
\ar@{-} +(-6,6)^4
\POS(0,4)\ar@{-} +(-6,-6)_5
\POS(10,-8) \ar_{{_4}\alpha}@{->} +(-6,11)
\POS(10,-8) \ar^{\tau \beta}@{->} +(-12, 8)
\endxy
\eas

The arrows ${_3}\alpha, {_4}\alpha$ are in $re(P)$ and $re(\tau(P))$
respectively. The subpolygons associated to ${_3}\alpha$ and
${_4}\alpha$ are \bas \tau(P_{_3\alpha}^\rt) = 216 = (\tau
P)^\rst_{{_4}\alpha} \quad ; \quad (\tau P)^\rt_{{_4}\alpha}=
5436 = \tau(P_{_3\alpha}^\rst) \eas The subpolygons associated to
$\beta$ and $\tau \beta$ are \bas (\tau P)_{\tau\beta}^\rt= 5216 =
\tau(P_\beta^\rt)\quad ; \quad \tau P_\beta^\rst = 435 = (\tau
P)_{\tau\beta}^\rst \; .\eas
\end{eg}

For a general dissection of an arbitrary polygon, $d \in D(P)$, write
$d \cap re(P) = \{{_{i_1}}\alpha \ldots {_{i_j}}\alpha\}$, with $i_1 <
i_2 \ldots < i_j$. Let $\{P_d^0, \ldots P_d^{|d|}\}$ be the set of
polygons labeling the vertexes of $T_{\phi_2, d}(P)$ and
$T_{\phi_{re}, d}(P)$. Enumerate the set such that for $m \leq j$,
$P_d^{m-1}$ and $P_d^m$ are adjacent, connected by ${_{i_m}}\alpha$
with $P_d^{m-1} \prec P_d^m$ in $T_{\phi_2, d}(P)$ and $P_d^m \prec
P_d^{m-1}$ in $T_{\phi_{re}, d}(P)$. The set of polygons labeling
$T_{\phi_2, \tau d}(\tau P)$ is  $\{\tau P_d^0, \ldots \tau
P_d^j, P_d^{j+1}, \ldots, P_d^{|d|}\}$, with $\tau P_{m} \prec \tau
P_{m-1}$ connected by $\tau {_{i_m}}\alpha$. If $d\cap re(P)=
\emptyset$ then $P_0$ is the label of the single root of all three
generators. If $\beta \not \in re(P)$, write $d_\beta = d \cap
D(P_\beta^\rst)$. The pruned subtrees of the admissible cut $c=\beta$
in $T_{\phi_2, d}(P)$, $T_{\phi_{re}, d}(P)$ and $T_{\phi_2, \tau
  d}(\tau P)$ are the same: $T_{\phi_2, d_\beta}(P_\beta^\rst)$ if
$\beta$ is a forwards arrow, and $T_{\phi_2, d_\beta}(\tau
P_\beta^\rst)$ if $\beta$ is a backwards arrow.

I summarize this in the following diagrams. Here $\beta$, $\gamma$,
$\delta$, and $\epsilon$ are assumed to be forward arrows. For
backwards arrows, replace $P_\beta^\rst$ with $\tau(P_\beta^\rst)$ and
$d_\beta$ with $\tau d_\beta$.

\bas T_{\phi_2, d}(P) = \sgn_{\phi_2}(d) \xy\POS(0,9)
*+{\bullet} *\cir{} \POS(3,11) *{^{P_0}} \POS(0,9) \ar@{-}_{\beta}
+(-6,-6) \ar@{-}^{{_{i_1}}\alpha} +(6,-6) \POS(6,3) *+{\bullet}
\POS(-6,3) *+{\bullet} \POS(8,5) *{^{P_1}} \POS(-15,3) *{^{T_{\phi_2,
      d_\beta}(P_\beta^\rst)}} \POS(6,3) \ar@{-}_{\gamma} +(-6,-6)
\ar@{.} +(6,-6) \POS(0,-3) *+{\bullet} \POS(-9,-3) *{^{T_{\phi_2,
      d_\gamma}(P_\gamma^\rst)}} \POS(12, -3) *+{\bullet} \POS(14,-1)
*{^{P_{j-1}}} \POS(12, -3) \ar@{-}_{\delta} +(-6,-6)
\ar@{-}^{{_{i_j}}\alpha} +(6,-6) \POS(6,-9) *+{\bullet} \POS(18,-9)
*+{\bullet} \POS(-3,-9) *{^{T_{\phi_2, d_\delta}(P_\delta^\rst)}}
\POS(20,-7) *{^{P_j}} \POS(3,-15) *{^{T_{\phi_2,
      d_\epsilon}(P_\epsilon^\rst)}} \POS(18,-9) \ar@{-}_{\epsilon}
+(-6,-6) \POS(12,-15) *+{\bullet} \endxy \eas
\vspace{.5cm} 
\bas T_{\phi_{re}, d}(P) =
(-1)^j\sgn_{\phi_2}(d) \xy\POS(0,9) *+{\bullet} *\cir{} \POS(3,11)
*{^{P_j}} \POS(0,9) \ar@{-}_{\epsilon} +(-6,-6)
\ar@{-}^{{_{i_j}}\alpha} +(6,-6) \POS(6,3) *+{\bullet} \POS(-6,3)
*+{\bullet} \POS(8,5) *{^{P_{j-1}}} \POS(-15,3) *{^{T_{\phi_2,
      d_\epsilon}(P_\epsilon^\rst)}} \POS(6,3) \ar@{-}_{\delta}
+(-6,-6) \ar@{.} +(6,-6) \POS(0,-3) *+{\bullet} \POS(-9,-3)
*{^{T_{\phi_2, d_\delta}(P_\delta^\rst)}} \POS(12, -3) *+{\bullet}
\POS(14,-1) *{^{P_1}} \POS(12, -3) \ar@{-}_{\gamma} +(-6,-6)
\ar@{-}^{{_{i_1}}\alpha} +(6,-6) \POS(6,-9) *+{\bullet} \POS(18,-9)
*+{\bullet} \POS(-3,-9) *{^{T_{\phi_2, d_\gamma}(P_\gamma^\rst)}}
\POS(20,-7) *{^{P_0 }} \POS(3,-15) *{^{T_{\phi_2,
      d_\beta}(P_\beta^\rst)}} \POS(18,-9) \ar@{-}_{\beta} +(-6,-6)
\POS(12,-15) *+{\bullet} \endxy \eas 
\vspace{.5cm} 
\bas T_{\phi_2, \tau d}(\tau P) =
\sgn_{\phi_2}(\tau d) \xy\POS(0,9) *+{\bullet} *\cir{} \POS(3,11)
*{^{\tau P_j}} \POS(0,9) \ar@{-}_{\tau \epsilon} +(-6,-6)
\ar@{-}^{{_{i_j}}\alpha} +(6,-6) \POS(6,3) *+{\bullet} \POS(-6,3)
*+{\bullet} \POS(8,5) *{^{\tau P_{j-1}}} \POS(-15,3) *{^{T_{\phi_2,
      d_\epsilon}(P_\epsilon^\rst)}} \POS(6,3) \ar@{-}_{\tau \delta}
+(-6,-6) \ar@{.} +(6,-6) \POS(0,-3) *+{\bullet} \POS(-9,-3)
*{^{T_{\phi_2, d_\delta}(P_\delta^\rst)}} \POS(12, -3) *+{\bullet}
\POS(14,-1) *{^{\tau P_1}} \POS(12, -3) \ar@{-}_{\tau \gamma} +(-6,-6)
\ar@{-}^{{_{i_1}}\alpha} +(6,-6) \POS(6,-9) *+{\bullet} \POS(18,-9)
*+{\bullet} \POS(-3,-9) *{^{T_{\phi_2, d_\gamma}( P_\gamma^\rst)}}
\POS(20,-7) *{^{\tau P_0}}\POS(3,-15) *{^{T_{\phi_2,
      d_\beta}(P_\beta^\rst)}} \POS(18,-9) \ar@{-}_{\tau \beta}
+(-6,-6) \POS(12,-15) *+{\bullet} \endxy \eas

The generators $T_{\phi_2, d}( P)$ and $T_{\phi_{re}, d}(P)$ have
different signs and different underlying tree structures, with labels
$\{P_d^0, \ldots P_d^{|d|}\}$. On the other hand, the generators
$T_{\phi_2,\tau d}(\tau P)$ and $T_{\phi_{re}, d}(P)$ have different
signs, but the same underlying tree structure, if one exchanges
$P_d^m$ with $\tau P_d^m$ for $m\leq j$.  For any dissection $d \in
D(P)$, \bas \sgn_{\phi_{re}}(d) = (-1)^{|d \cap
  re(P)|}\sgn_{\phi_2}(d) = (-1)^{\sum_{\alpha \in d, \alpha
    \textrm{bw}}\chi(\alpha)}(-1)^{|d \cap re(P)|}\eas and \bas
\sgn_{\phi_2}(\tau d) = (-1)^{\sum_{\alpha\in d \setminus re(P),
    \alpha \textrm{fw}}\chi(\alpha)} \;. \eas

Recall that a coideal, $C$, of a coalgebra $(\mathcal{H}, \Delta,
\varepsilon)$ has the structure \bas \Delta (C) \subset \mathcal{H}
\otimes C + C\otimes \mathcal{H} \eas and $\varepsilon(C) = 0$.  It is
a primitive coideal if \bas \Delta(c_i) = 1 \otimes c_i + c_i \otimes
1 \; \eas for all generators of $C$.

\begin{dfn}
Let $I_n\subset \mathcal{P}_\bullet^{(1)}(R)$ be the linear subspace
generated by $\{P +(-1)^n \tau P | P \textrm{ polygon of weight }
n\}$. \end{dfn}

Notice that $I_1 = 0$ is the trivial co-ideal. Each $I_n$ is a
primitive co-ideal in $B_{\phi_2}$.

\begin{dfn}
Define a set of quotient maps \bas q_n: \bar T(V(R)) \rightarrow
\bar T(V(R))/(\sum_{k=1}^n I_k) \;.\eas \label{qndef}
\end{dfn}

\begin{thm}
Let $P$ be an $R$-deco polygon of weight $n$. Let
$q_n$ be the quotient map defined above. For $P \in V_n(R)$,
\ba \Lambda_{\phi_{re}}(P) + (-1)^n \Lambda_{\phi_2}(\tau
P)\in \ker q_n \;.\label{orientsigneq}\ea \label{orientsign}\end{thm}

\begin{proof}
If $P$ is a polygon of weight 2, $\Lambda_{\phi_{re}}(P) -
\Lambda_{\phi_2}(\tau P) = 0$.  For $P = abc \in
V_2(R)$,
\begin{equation*} \begin{array}{ccccc} \; & \threegon{a}{b}{c}&
  \threegonarrowb{a}{b}{c} & \threegonarrowc{a}{b}{c} &
  \threegonarrowa{a}{b}{c}\\ &&&& \\ \Lambda_{\phi_{re}}(P) = & P &+
                 [bc|ab] & - [bc|ac] & - [ac|ba] \; , \end{array}
\end{equation*}
While for $\tau P$, 
\begin{equation*}
\begin{array}{ccccc} \; & \threegon{b}{a}{c} &
  \threegonarrowb{b}{a}{c}& \threegonarrowc{b}{a}{c} &
  \threegonarrowa{b}{a}{c} \\ &&&& \\\Lambda_{\phi_2}(\tau P) = & \tau P
  &+ [ac|ba] &+ [bc|ac] &- [bc|ab] \; .\end{array}
\end{equation*}
 Therefore \bas \Lambda_{\phi_{re}}(P) + \Lambda_{\phi_{2}}(\tau P)= P
 + \tau P \in I_2\;. \eas Suppose the theorem holds for all $k < n$.

Let $P$ be an $R$-deco polygon of weight $n$. Consider the
dissections $d \in D(P)$, with $d \cap re(P) = \{{_{i_1}}\alpha \ldots
{_{i_j}}\alpha\}$. Let $\{P_d^0, \ldots P_d^{|d|}\}$ be the set of
polygons decorating $T_{\phi_{re}, d}(P)$ and $T_{\phi_2, d}(P)$, with
each $P_d^i \in V_{n_i}(R)$ with $\sum_{i=0}^{|d|} n_i = n$. For
$m\leq j$, the polygons $P_d^{m-1}$ and $P_d^m$ are adjacent,
connected by ${_{i_m}}\alpha$, and $P_d^m \prec P_d^{m-1}$ in
$T_{\phi_{re},d}(P)$ and $P_d^{m-1} \prec P_d^m$ in
$T_{\phi_2,d}(P)$. I define a series of trees (with sign)
$\{T_{m,d}(P)\}$, with $m \leq j$ formed by replacing the polygons
$\{P_d^0, \ldots , P_d^m\}$ in $T_{\phi_{re}, d}(P)$ with the polygons
$\{(-1)^{n_0}\tau P_d^0, \ldots ,(-1)^{n_m} \tau P_d^m\}$. In this
series, $T_{\phi_{re}, d}(P) = T_{-1, d}(P)$. For example \bas
T_{0,d}(P) = \sgn_{\phi_{re}}(d)(-1)^{n_0} \xy\POS(0,9) *+{\bullet}
*\cir{} \POS(3,11) *{^{P_j}} \POS(0,9) \ar@{-} +(-6,-6) \ar@{-}
+(6,-6) \POS(6,3) *+{\bullet} \POS(-6,3) *+{\bullet} \POS(9,5)
*{^{P_{j-1}}} \POS(-15,3) *{^{T_{\phi_2,
      d_\epsilon}(P_\epsilon^\rst)}} \POS(6,3) \ar@{-} +(-6,-6)
\ar@{.} +(6,-6) \POS(0,-3) *+{\bullet} \POS(-9,-3) *{^{T_{\phi_2,
      d_\delta}(P_\delta^\rst)}} \POS(12, -3) *+{\bullet} \POS(15,-1)
*{^{P_1}} \POS(12, -3) \ar@{-} +(-6,-6) \ar@{-} +(6,-6) \POS(6,-9)
*+{\bullet} \POS(18,-9) *+{\bullet} \POS(-3,-9) *{^{T_{\phi_2,
      d_\gamma}( P_\gamma^\rst)}} \POS(22,-7) *{^{\tau
    P_0}}\POS(3,-15) *{^{T_{\phi_2, d_\beta}(P_\beta^\rst)}}
\POS(18,-9) \ar@{-} +(-6,-6) \POS(12,-15) *+{\bullet} \endxy \eas
\vspace{.5cm} \bas T_{1,d}(P) = \sgn_{\phi_{re}}( d)(-1)^{n_0+n_1}
\xy\POS(0,9) *+{\bullet} *\cir{} \POS(3,11) *{^{P_j}} \POS(0,9)
\ar@{-} +(-6,-6) \ar@{-} +(6,-6) \POS(6,3) *+{\bullet} \POS(-6,3)
*+{\bullet} \POS(9,5) *{^{P_{j-1}}} \POS(-15,3) *{^{T_{\phi_2,
      d_\epsilon}(P_\epsilon^\rst)}} \POS(6,3) \ar@{-} +(-6,-6)
\ar@{.} +(6,-6) \POS(0,-3) *+{\bullet} \POS(-9,-3) *{^{T_{\phi_2,
      d_\delta}(P_\delta^\rst)}} \POS(12, -3) *+{\bullet} \POS(16,-1)
*{^{\tau P_1}} \POS(12, -3) \ar@{-} +(-6,-6) \ar@{-} +(6,-6)
\POS(6,-9) *+{\bullet} \POS(18,-9) *+{\bullet} \POS(-3,-9)
*{^{T_{\phi_2, d_\gamma}( P_\gamma^\rst)}} \POS(22,-7) *{^{ \tau
    P_0}}\POS(3,-15) *{^{T_{\phi_2, d_\beta}(P_\beta^\rst)}}
\POS(18,-9) \ar@{-} +(-6,-6) \POS(12,-15) *+{\bullet} \endxy\;. \eas
For $m \leq j$, $\Lambda (T_{m-1, d}(P) + T_{m, d}(P)) \in \ker
q_{n_m}.$ The alternating sum, \bas \sum_{m=0}^j(-1)^m\Lambda (T_{m-1,
  d}(P) + T_{m, d}(P)) = \Lambda(T_{\phi_{re}, d}(P) + (-1)^j T_{j,
  d}(P)) \; \eas is in $ \ker q_{\sum_{m= 1}^jn_m}$. Since \bas
(-1)^j\sgn(T_{j,d}) = (-1)^{\sum_{i=0}^jn_i}\sgn_{\phi_2}(d) =
\\ (-1)^{\sum_{i=0}^jn_i}(-1)^{\sum_{\alpha \in d, \alpha
    \textrm{bw}}\chi(\alpha)} = (-1)^n \sgn_{\phi_2}(\tau d)\;, \eas
for all $d \in D(P)$, $d \neq \emptyset$, \bas (-1)^jT_{j, d}(P) =
(-1)^n T_{\phi_2, \tau d}(\tau P)\;. \eas Applying $\Lambda$ on these
trees and generators gives \ba \Lambda \left(\mathop{\sum_{d\in
    D(P)}}_{d\neq \emptyset} T_{\phi_{re}, d}(P) + (-1)^nT_{\phi_2,
  \tau d}(\tau P) \right) \in \ker(q_{n-1})\;.\label{sansempty}\ea
Writing $\Lambda(T_{\phi_{re}, \emptyset}(P)) = [P]$ equation \eqref{sansempty}
gives \bas \Lambda_{\phi_{re}}(P) - [P] +(-1)^n \left( \Lambda_{\phi_2}(\tau
  P) - [\tau P]\right) \in \ker q_{n-1} \;, \eas and \bas
\Lambda_{\phi_{re}}(P) + (-1)^n\Lambda_{\phi_2}(\tau P) \in \ker q_n
\;. \eas
\end{proof}

 Combining theorem \ref{orientsign} with lemma \ref{redif} gives the
 following result.

\begin{thm}
If $P$ is an $R$-deco polygon of weight $n$, \bas q_n(\Lambda_{\phi_2}(P)+
(-1)^n\Lambda_{\phi_2}(\tau P)) = q_n( \sum_{i=2}^{n-1} (-1)^{n-i}
\Lambda_{\phi_2}(P_{_i\alpha}^\rt) \sha
\Lambda_{\phi_2}(\tau(P_{_i\alpha}^\rst))) \; . \eas
 \label{polyorientthm}\end{thm}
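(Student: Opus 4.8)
The plan is to feed Theorem~\ref{orientsign} and Lemma~\ref{redif} into the left‑hand side and then push the remaining discrepancy down to the subpolygons, where an induction on the weight $n$ closes it. First I would use Theorem~\ref{orientsign} at weight $n$ to replace $(-1)^n\Lambda_{\phi_2}(\tau P)$ by $-\Lambda_{\phi_{re}}(P)$ modulo $\ker q_n$, giving $q_n\bigl(\Lambda_{\phi_2}(P)+(-1)^n\Lambda_{\phi_2}(\tau P)\bigr)=q_n\bigl(\Lambda_{\phi_2}(P)-\Lambda_{\phi_{re}}(P)\bigr)$, and then rewrite the right side with Lemma~\ref{redif} as $q_n\!\bigl(\sum_{i=2}^{n}\Lambda_{\phi_2}(P_{_i\alpha}^\rt)\sha\Lambda_{\phi_{re}}(P_{_i\alpha}^\rst)\bigr)$. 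The subpolygon $P_{_i\alpha}^\rt$ has weight $i-1$ and $P_{_i\alpha}^\rst$ has weight $\chi({_i}\alpha)=n-i+1$, both strictly less than $n$, so (since $\sum_{k\le m}I_k\subseteq\sum_{k\le n}I_k$) every congruence modulo $\ker q_m$ with $m<n$ is also one modulo $\ker q_n$; in particular Theorem~\ref{orientsign} and, inductively, the present theorem are available on these subpolygons inside the quotient by $\ker q_n$.

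It then remains to show $q_n\bigl(\sum_{i}\Lambda_{\phi_2}(P_{_i\alpha}^\rt)\sha\Lambda_{\phi_{re}}(P_{_i\alpha}^\rst)\bigr)=q_n\bigl(\sum_{i}(-1)^{n-i}\Lambda_{\phi_{re}}(P_{_i\alpha}^\rt)\sha\Lambda_{\phi_{re}}(\tau P_{_i\alpha}^\rst)\bigr)$, which I would do by induction on $n$. On the cutoff polygon $P_{_i\alpha}^\rst$ (weight $n-i+1$) I would apply Theorem~\ref{orientsign} to trade $\Lambda_{\phi_{re}}$ for $\Lambda_{\phi_2}$ at the cost of a $\tau$ and the sign $(-1)^{n-i}$ — using that $\tau$ sends a root‑ending arrow to a root‑ending arrow with its two subpolygons interchanged and reversed, as recorded just before the definition of $I_n$ — and on the root polygon $P_{_i\alpha}^\rt$ (weight $i-1$) use Theorem~\ref{orientsign} together with the inductive case of the present theorem to handle $\Lambda_{\phi_2}(P_{_i\alpha}^\rt)-\Lambda_{\phi_{re}}(P_{_i\alpha}^\rt)$, which Lemma~\ref{redif} expresses as a sum of shuffle products of still smaller polygons. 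Collecting everything, the two sums become equal modulo $\sum_{k<n}I_k\subseteq\ker q_n$; the base cases $n=1$ (where $I_1=0$ and $\tau$ fixes a $2$‑gon, so both sides vanish) and $n=2$ (the explicit weight‑$2$ computation already in the proof of Theorem~\ref{orientsign}, where no discrepancy arises and the factors of the claimed right‑hand side are just $2$‑gons) anchor the induction.

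The hard part will be this last induction: the sign discipline (threading the weights $i-1$ and $n-i+1$ through the applications of Theorem~\ref{orientsign} so that precisely $(-1)^{n-i}$ survives) and, above all, organizing the many error terms so that they cancel — which requires invoking the present theorem's inductive hypothesis not term by term but as a whole, together with commutativity of $\sha$. One must also be sure the quotient by $\sum_k I_k$ is an algebra quotient, i.e.\ that $\bigoplus_k I_k$ generates a Hopf ideal of $\Lambda(\Tsub2)$, so that lower‑weight congruences may be shuffled against the complementary factor, and one must check that the $\tau$ produced by Theorem~\ref{orientsign} attaches to the cutoff polygon and not the root polygon — which is exactly where the asymmetry between the Hopf rule $\phi_2$ and the non‑Hopf rule $\phi_{re}$ enters.
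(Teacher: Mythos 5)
Your proposal follows the same route as the paper, whose entire proof of this statement is the single sentence ``Combining Theorem \ref{orientsign} with Lemma \ref{redif} gives the following.'' Your more careful accounting is sound and in fact supplies work the paper silently omits: the naive combination only yields $q_n\bigl(\sum_{i}\Lambda_{\phi_2}(P_{_i\alpha}^\rt)\sha\Lambda_{\phi_{re}}(P_{_i\alpha}^\rst)\bigr)$, and passing to the stated right-hand side does require exactly what you describe --- applying Theorem \ref{orientsign} to the cutoff polygon (weight $n-i+1$, producing the sign $(-1)^{n-i}$ and the $\tau$), applying Lemma \ref{redif} inductively to trade $\Lambda_{\phi_2}$ for $\Lambda_{\phi_{re}}$ on the root polygon, verifying that the resulting correction terms cancel in the full sum over $i$ (one can check at $n=3$ that the two triple-shuffle corrections cancel by commutativity of $\sha$), and reading $\ker q_n$ as the shuffle ideal generated by $\sum_{k\le n}I_k$ so that lower-weight congruences may be shuffled against the complementary factor.
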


\begin{proof} 
Let $P$ be a polygon of weight $n$. Apply $q_n$ to both sides of
\eqref{redifeq} \bas q_n(\Lambda_{\phi_2}(P) - \Lambda_{\phi_{re}}(P)) = q_n(
\sum_{i=2}^{n} \Lambda_{\phi_2}(P_{_i\alpha}^\rt) \sha
\Lambda_{\phi_{re}}(P_{_i\alpha}^\rst)) \;. \eas By equation
\eqref{orientsigneq} replace the terms $\Lambda_{\phi_{re}}(P)$ and
$\Lambda_{\phi_{re}}(P_{{_i}\alpha}^\rst)$ with
$(-1)^{n-1}\Lambda_{\phi_2}(\tau P)$ and
$(-1)^{n-i}\Lambda_{\phi_2}(\tau P_{{_i}\alpha}^\rst)$ to get \bas
q_n(\Lambda_{\phi_2}(P)+ (-1)^n\Lambda_{\phi_2}(\tau P)) = q_n(
\sum_{i=2}^{n} (-1)^{n-i} \Lambda_{\phi_2}(P_{_i\alpha}^\rt) \sha
\Lambda_{\phi_2}(\tau(P_{_i\alpha}^\rst))) \eas as desired.
\end{proof}

This show that $\Lambda_{\phi_2}(P)$ and $\Lambda_{\phi_2}(\tau P)$
can be compared up to a primitive coideal. This relation between
decorated polygons of different orientation is reminiscent of a
relation between iterated integrals on $R \subset \C^\times$. Recall
that for iterated integrals, there is the relation \cite{Gonch05} \bas
I(0; x_1, \ldots x_n; y) I(0; w_1, \ldots w_m; y) = I(0; (x_1, \ldots
x_n) \sha (w_1, \ldots, w_m) ; y) \;.\eas

\begin{lem}
For $r_i \in R$, \ba I(0; r_1, \ldots, r_n;r_{n+1}) + (-1)^nI(0;r_n,
\ldots r_1;r_{n+1}) = \nonumber \\ \sum_{i=2}^n (-1)^{n-i} I(0; r_1,
\ldots, r_{i-1};r_{n+1})I(0;r_n, \ldots
r_i;r_{n+1}) \label{itershuffle}\; .\ea
\end{lem}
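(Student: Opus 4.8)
The plan is to derive \eqref{itershuffle} from the shuffle multiplicativity of iterated integrals recalled just above, together with the antipode of the shuffle Hopf algebra, so that the only analytic input is the shuffle relation $I(0;u;r_{n+1})\,I(0;v;r_{n+1})=I(0;u\sha v;r_{n+1})$. First I would fix the straight-line path $\gamma$ from $0$ to $r_{n+1}$ and record that $w\mapsto I(0;w;r_{n+1})$ is an algebra homomorphism from the shuffle algebra $\Q\langle r_1,\dots,r_n\rangle$ of words in the letters $r_1,\dots,r_n$ (equivalently, in the $1$-forms $\frac{dt}{t-r_i}$) to $\C$; this is precisely the shuffle relation recalled just above. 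Hence it suffices to prove the corresponding identity among formal words and then apply this homomorphism.

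Next I would invoke the Hopf algebra structure on $\Q\langle r_1,\dots,r_n\rangle$ with product $\sha$, deconcatenation coproduct $\Delta(a_1\cdots a_m)=\sum_{k=0}^{m}(a_1\cdots a_k)\otimes(a_{k+1}\cdots a_m)$, counit $\epsilon$, and antipode $S(a_1\cdots a_m)=(-1)^m a_m a_{m-1}\cdots a_1$. Applying the antipode identity $m\circ(\mathrm{id}\otimes S)\circ\Delta=\eta\epsilon$ to the word $w=r_1r_2\cdots r_n$ (with $n\ge1$) yields
\bas 0=\sum_{k=0}^{n}(-1)^{n-k}\,(r_1\cdots r_k)\ \sha\ (r_n r_{n-1}\cdots r_{k+1})\;.\eas
Pushing this relation through the homomorphism $w\mapsto I(0;w;r_{n+1})$ gives $0=\sum_{k=0}^{n}(-1)^{n-k}\,I(0;r_1,\dots,r_k;r_{n+1})\,I(0;r_n,\dots,r_{k+1};r_{n+1})$. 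Isolating the two extreme terms of this sum, namely the $k=n$ term, whose second factor is the empty integral and so equals $I(0;r_1,\dots,r_n;r_{n+1})$, and the $k=0$ term, whose first factor is empty and so equals $(-1)^nI(0;r_n,\dots,r_1;r_{n+1})$, and then transposing the remaining terms to the other side, produces \eqref{itershuffle} after the obvious reindexing.

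A second, more geometric route avoids the shuffle Hopf algebra: one applies the path-composition formula at the intermediate point $r_{n+1}$ to the iterated integral of the word $r_1\cdots r_n$ along the null-homotopic loop $\gamma\gamma^{-1}$ based at $0$, which vanishes, and then rewrites each second factor via path reversal, $I(r_{n+1};a_1,\dots,a_m;0)=(-1)^m I(0;a_m,\dots,a_1;r_{n+1})$; this reproduces the same displayed relation. In either approach the one point needing care, and hence the main obstacle, is purely technical: when the straight-line path passes through or ends at the points $r_1,\dots,r_{n+1}$ one must read $I(0;w;r_{n+1})$ as a shuffle-regularised integral. Since shuffle multiplicativity, path composition, and path reversal all persist for the regularised integrals, this is entirely standard bookkeeping and the argument above goes through unchanged.
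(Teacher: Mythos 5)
Your first route is correct, and it is a cleaner, more conceptual packaging of what the paper does by hand. The paper's proof uses exactly the same analytic input (shuffle multiplicativity of $w\mapsto I(0;w;r_{n+1})$), rewrites each product on the right-hand side as a single shuffle integral, and then cancels terms pairwise across consecutive values of $i$ according to whether $r_i$ precedes or follows $r_{i+1}$ in the shuffled word, so that only the two extreme terms survive and reproduce the left-hand side. That telescoping is precisely an inline verification of the antipode identity $m\circ(\mathrm{id}\otimes S)\circ\Delta=\eta\epsilon$ for the deconcatenation coproduct, which you instead invoke as a known fact; your version buys brevity and makes visible that the lemma is the image of the shuffle-antipode relation under the homomorphism $w\mapsto I(0;w;r_{n+1})$, while the paper's version buys self-containedness, since the standard proof of the antipode formula is the same cancellation. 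One caveat about your ``obvious reindexing'': what your computation actually yields is
\bas I(0;r_1,\ldots,r_n;r_{n+1})+(-1)^nI(0;r_n,\ldots,r_1;r_{n+1})=\sum_{i=1}^{n-1}(-1)^{n-i+1}I(0;r_1,\ldots,r_i;r_{n+1})\,I(0;r_n,\ldots,r_{i+1};r_{n+1})\;,\eas
whose summation range and overall sign do not literally match the display in the lemma; but the lemma as printed is itself inconsistent (the $r_0$ on the left, an $i=n$ summand that duplicates the first term of the left-hand side, and a proof that refers to boundary terms $i=1$ and $i=n-1$ outside the stated range), so you have in fact identified the correct form of the identity. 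Your second route via path composition along $\gamma\gamma^{-1}$ and path reversal is also valid and standard, and your remark about shuffle regularisation at singular endpoints is an appropriate, if routine, precaution.
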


\begin{proof} 
This proof is also presented in \cite{Gonch05}. Rewrite the right hand
side of \eqref{itershuffle} as \bas \sum_{i=2}^n (-1)^{n-i} I(0; (r_1,
\ldots, r_{i-1}) \sha (r_n, \ldots r_i);r_{n+1}) \;. \eas For a fixed
$i$ each term in the shuffle product in equation \eqref{itershuffle}
can be broken down into two groups, the terms where $r_{i-1}$ comes
before $r_i$ and the terms where it comes after. By the alternating
signs, the former cancel with a term in the shuffle \bas I(0; (r_1,
\ldots, r_i) \sha (r_n, \ldots r_{i+1}); r_{n+1})\;,\eas and the
latter in the shuffle \bas I(0; (r_1, \ldots, r_{i-2}) \sha (r_n,
\ldots r_i, r_{i-1}); r_{n+1})\;.\eas What remains are the terms (from
$i=2$) $(-1)^nI(0; r_n, \ldots r_1; r_{n+1})$ and (from $i = n$) $
I(0; r_1, \ldots r_n; r_{n+1})$, which are the terms on the left hand
side of \eqref{itershuffle}.

\end{proof}

\begin{rem} The relationship expressed in \eqref{itershuffle} is exact on iterated integrals, while there is a relation only up to a
primitive coideal on the level of polygons, as shown in theorem
\ref{polyorientthm}. This is in contradiction to the conjecture by
Gangl and Brown that relationships between dihedral symmetries of
$R$-deco polygons can be expressed purely in terms of shuffle products
of polygons of lower weights. It also shows that the coalgebra map
between the bar elements associated to polygons and iterated
integrals, \eqref{GGLmap} \bas \Phi : B_{\phi_2} \rightarrow
\mathcal{I}_\bullet(R) \;,\eas is not injective: the coideals $I_n \in \ker
\Phi$ for $n \geq 1$. \end{rem}

\subsection{Order $n$ generator of the dihedral group}

In this subsection, I consider the rotation map, $\sigma$ on $\mcp$
that sends the $R$-deco polygon $P$ to $\bP$. For $P = 12\ldots n$,
$\bP= 2\ldots n 1$ is the polygon rotated clockwise, changing the root
side. When restricted to $V_n(R)$, $\sigma|_{V_n(R)}$ is the order $n$
generator of the dihedral group. In order to examine this rotation, I
work with $\D_{\phi_4}$, which reflect the symmetry of the change, and
relate the corresponding elements of the bar construction to $B_{\phi_2}$.

\subsubsection{Relating $B_{\phi_2}$ to $B_{\phi_4}$}
To understand the action of $\sigma$ on $I(R)$, one wants to calculate
$\Lambda_{\phi_2}(P) - \Lambda_{\phi_2} (\sigma P))$. This is a
difficult calculation, and it is easier to break down into
intermediate steps. I use the results of the last section to relate
the algebras $\Lambda_{\phi_2}(P) - \Lambda_{\phi_4} (P)$. I then
study the action of $\sigma$ on the algebra $B_{\phi_4}$.

\begin{dfn}
 Let $b(P) = \{\text{ backwards arrows of } P\}$.
 \end{dfn}

Recall that $\Tsub{\phi_4}$ and $\Tsub{\phi_3}$ are both dissection
compatible Hopf algebras, and that $\phi_3$ and $\phi_4$ define the
same differential. The difference set between the dual tree generating
sets $\phi_3$ and $\phi_4$ is $\mathcal{S} = \bigcup_{P \;
  R-\textrm{deco}}b(P)$. The algebra $\Tsub{\phi_3}$ is almost
$\phi_4$ compatible.

\begin{thm}
For $P$ an $R$-deco polygon of weight $n$, and $d \in D(P)$, let
$\{P_d^1, \ldots, P_d^{|d|+1}\}$ be the set of polygons decorating the
generator $T_{\phi_4, d}(P)$. Then, for the map $q_n$ as defined in
definition \ref{qndef} \ba q_{n-1}\left(\Lambda_{\phi_2}(P)\right) =
q_{n-1}\left(\Lambda_{\phi_4} (P) + \sum_{ d \subseteq b(P)}
(-1)^{|d|} \sha_{j=1}^{|d|+1}\Lambda_{\phi_4}
(P_d^j)\right)\;.  \label{relate}\ea \label{relatethm}
\end{thm}

\begin{proof}
For all arrows $d \subset b(P)$, $\sgn_{\phi_4}(d) =+1$. By
theorem \ref{edgeswitch} \ba \Lambda_{\phi_3} (P) = \Lambda_{\phi_4}
(P) + \sum_{ d \subseteq b(P)} (-1)^{|d|}
\sha_{j=1}^{|d|+1}\Lambda_{\phi_4} (P_d^j) \;.\label{step1}\ea

For $d \in D(P)$, with $|b(P) \cap d| = j$, let the set $\{P_d^1,
\ldots P_d^{|d|+1}\}$ decorate the vertexes of $T_{\phi_3, d}(P)$ and
$T_{\phi_4, d}(P)$ enumerated $\{P_d^1, \ldots P_d^{|d|+1}\}$ so that
$\{P_d^1,\ldots , P_d^j\}$ decorate the terminal vertexes of the edges
associated to a backwards arrow in $T_{\phi_3, d}(P)$. Then the set
\bas \{\tau P_d^1, \ldots \tau P_d^j, P_d^{j+1}, \ldots
P_d^{|d|+1}\}\eas decorates the vertexes of $T_{\phi_2, d}(P)$. The
generators $T_{\phi_2, d}(P)$ and $T_{\phi_4, d}(P)$ have different
signs, but the same underlying trees, with $P_d^m$ replacing $\tau
P_d^m$ for $m \leq j$. Let $P_d^i \in V_{n_i}(R)$ with
$\sum_{i=1}^{|d|+1} n_i= n$.

Recall that \bas \sgn_{\phi_2}(d) = (-1)^{\sum_{i=1}^j n_i} \quad \textrm{ and }
\quad \sgn_{\phi_3}(d) = (-1)^j \;. \eas Define a series of signed trees
$\{T_{i,d}(P)\}$, $1 \leq i \leq j$  by replacing the polygons
$\{\tau P_d^1, \ldots , \tau P_d^i\}$ in $T_{\phi_2,d}(P)$ with the
set $\{(-1)^{n_1} P_1, \ldots , (-1)^{n_i} P_d^i\}$. In this series,
$T_{\phi_2, d}(P) = T_{0, d}(P)$,  $T_{j, d}(P) = (-1)^j
T_{\phi_3, d}(P)$, and \bas \Lambda(T_{i-1,d}(P) + T_{i,d}(P)) \eas  is  in
$\ker(q_{n_i})$. The alternating sum \bas
\sum_{i=1}^j(-1)^{i-1} \Lambda \left( T_{i-1, d}(P) + T_{i,d}(P)
\right) = \Lambda \left( T_{\phi_2, d}(P) - T_{\phi_3, d}(P) \right) \eas 
is in $\ker (q_{\sum_{i=1}^j n_i})$. Summing over all dissections $d \in
D(P)$ gives \bas \Lambda_{\phi_2}(P) - \Lambda_{\phi_3}(P) \in
\ker q_{n-1} \;. \eas Plugging this into equation \eqref{step1}
gives \bas q_{n-1}(\Lambda_{\phi_2} (P) )=
q_{n-1}\left(\Lambda_{\phi_4} (P))+ \sum_{ d \subseteq b(P)} (-1)^{|d|}
\sha_{j=1}^{|d|+1}\Lambda_{\phi_4} (P^j_d)\right) \;. \eas
\end{proof}

\begin{eg}
Let $P_2=abc$, $P_3=abcd$ be $R$-deco polygons of weight 2 and 3. The
following are the explicit calculations for $P_2$, abd $P_3$.
\bas \begin{array}{cc} & d= \threegonarrowa{a}{b}{c}
  \\ q_1(\Lambda_{\phi_2} (P_2) - \Lambda_{\phi_4} (P_2)) & = - q_1(
  \Lambda_{\phi_4} (ac) \sha \Lambda_{\phi_4} (ba)) \end{array}
\;.\eas Since $\ker q_1 = 0$, this is an exact relation: \bas
\Lambda_{\phi_2} (P_2)) - \Lambda_{\phi_4} (P_2)) = - \Lambda_{\phi_4}
(ac) \sha \Lambda_{\phi_4} (ba) \eas For $P_3$, $q_3\left(
\Lambda_{\phi_2} (P_3) - \Lambda_{\phi_4} (P_3))\right) = $
\bas \begin{array}{ccc} d = \xy \POS(10,4) \ar@{=} +(-10,0)_d \ar@{-}
  +(0,-10)^c \POS(10,-6) \ar@{-} +(-10,0)^b \POS(0,4) \ar@{-}
  +(0,-10)_a \POS(0,4) *+{\bullet} \POS(10,-6) \ar@{->} +(-9,4)
\endxy &  d = \xy
\POS(10,4) \ar@{=} +(-10,0)_d
\ar@{-} +(0,-10)^c
\POS(10,-6) \ar@{-} +(-10,0)^b
\POS(0,4) \ar@{-} +(0,-10)_a
\POS(0,4) *+{\bullet}
\POS(10,4) \ar@{->} +(-9,-4)
\endxy & d = \xy
\POS(10,4) \ar@{=} +(-10,0)_d
\ar@{-} +(0,-10)^c
\POS(10,-6) \ar@{-} +(-10,0)^b
\POS(0,4) \ar@{-} +(0,-10)_a
\POS(0,4) *+{\bullet}
\POS(10,4) \ar@{->} +(-4,-9)
\endxy \\  -  q_3( \Lambda_{\phi_4}
(acd) \sha \Lambda_{\phi_4} (ba) + & \Lambda_{\phi_4} (bca)
\sha \Lambda_{\phi_4} (ad)  +  &   \Lambda_{\phi_4} (abd) \sha
\Lambda_{\phi_4} (cb) )\end{array} \eas \bas \begin{array}{cc} d = \xy
\POS(10,4) \ar@{=} +(-10,0)_d
\ar@{-} +(0,-10)^c
\POS(10,-6) \ar@{-} +(-10,0)^b
\POS(0,4) \ar@{-} +(0,-10)_a
\POS(0,4) *+{\bullet}
\POS(10,-6) \ar@{->} +(-9,4)
\POS(10,4) \ar@{->} +(-9,-4)
\endxy & d = \xy
\POS(10,4) \ar@{=} +(-10,0)_d
\ar@{-} +(0,-10)^c
\POS(10,-6) \ar@{-} +(-10,0)^b
\POS(0,4) \ar@{-} +(0,-10)_a
\POS(0,4) *+{\bullet}
\POS(10,4) \ar@{->} +(-9,-4)
\POS(10,4) \ar@{->} +(-4,-9)
\endxy \\   +q_3 (  \Lambda_{\phi_4} (ad) \sha
\Lambda_{\phi_4} (ba)\sha \Lambda_{\phi_4} (ca)+  &
 \Lambda_{\phi_4} (ad) \sha \Lambda_{\phi_4} (ba) \sha
\Lambda_{\phi_4} (cb) ) \end{array} \;.\eas Since $\ker q_2 \neq
0$, computing the difference explicitly gives \bas
\Lambda_{\phi_2}(P_3) - \Lambda_{\phi_4} (P_3) = - \Lambda_{\phi_4}
(acd) \sha \Lambda_{\phi_4} (ba) - \Lambda_{\phi_4} (bca) \sha
\Lambda_{\phi_4} (ad) - \\ \Lambda_{\phi_4} (abd) \sha
\Lambda_{\phi_4} (cb) + \Lambda_{\phi_4} (ad) \sha \Lambda_{\phi_4}
(ba)\sha \Lambda_{\phi_4} (ca)+ \\ \Lambda_{\phi_4} (ad) \sha
\Lambda_{\phi_4} (ba) \sha \Lambda_{\phi_4} (cb) +[ad|bca + cba]\eas

\end{eg}

The Hopf algebra $B_{\phi_4}$ is contained in $
H^0(B_{\D_3}(\mcp))$ by theorem \ref{Bcocycle}.

\subsubsection{Introducing a new symmetry}

Instead of directly comparing $\Lambda_{\phi_2}(P)$ and
$\Lambda_{\phi_2} (\bP)$ in this subsection, I compare $\Lambda_{\phi_4}
(P)$ and $\Lambda_{\phi_4} (\bP)$. Theorem \ref{relatethm} then relates
these terms to $\Lambda_{\phi_2}(P)$ and $\Lambda_{\phi_2}(\sigma P)$
respectively, as desired.

\begin{dfn}
For $P$ the $R$-deco polygon $12\ldots n$, let $(\bP)$ be the $R$-deco
polygon $2 \ldots n1$ with labels rotated one place in a
clockwise direction. \end{dfn}

\begin{eg} For the weight $3$ polygon $P =1234$ one has

\bas P = \xy
\POS(0,4) *+{\bullet}
\POS(10,4) \ar@{=} +(-10,0)_4
\ar@{-} +(0,-10)^3
\POS(10,-6) \ar@{-} +(-10,0)^2
\POS(0,4) \ar@{-} +(0,-10)_1
\endxy
\textrm{ and }
\bP = 
\xy
\POS(0,4) *+{\bullet}
\POS(10,4) \ar@{=} +(-10,0)_1
\ar@{-} +(0,-10)^4
\POS(10,-6) \ar@{-} +(-10,0)^3
\POS(0,4) \ar@{-} +(0,-10)_2
\endxy \eas
\end{eg}

\begin{eg}
For a weight 1 polygon, $P= ab$, $\bP = ba$, applying the map $\Phi$ from equation \eqref{GGLmap} \begin{multline} \Phi(\Lambda_{\phi_4} (P)
-\Lambda_{\phi_4} (\sigma P) )= \Phi(\Lambda_{\phi_2} (P)
-\Lambda_{\phi_2}(\bP)) =  \\ \Li_1(\frac{a}{b}) - \Li_1(\frac{b}{a}) =
\ln(b) - \ln(a) \;.\label{Li1}\end{multline} The last equality holds up to a power of
$i\pi$.

Direct calculation shows that for $P= abc$, \begin{multline}
  \Lambda_{\phi_4} (P) -\Lambda_{\phi_4} (\sigma P) = \\ P -\bP + [ab
    \sha bc] - [ba \sha cb] + [(ac-ca+ba-ab) | bc] +
         [ba|(ac-ca+cb-bc)] \;. \label{3gonphi4}\end{multline}  Applying theorem \ref{relatethm} gives \bas
\Lambda_{\phi_2}(P) -\Lambda_{\phi_2} (\sigma P) = \\ P -\bP + [ab
  \sha bc] - [ac\sha ba] + [(ac-ca+ba-ab) | bc] +
         [ba|(ac-ca+cb-bc)] \;. \eas
Subsequent direct calculations get increasingly
complex. \label{2&3gons} \end{eg}

To calculate this relation for higher weight polygons, I examine the
action of $\sigma$ on the dissecting arrows of an $R$-deco polygon $P$
of weight $n$. The rotation map $\sigma$ acts on dissecting arrows,
rotating the starting vertex and ending edge one position backwards,
as defined by the orientation of the polygon. Therefore, $\sigma
({_i}\alpha_j) = {_{i-1}}\alpha_{j-1}$ if $i$ or $j \neq 1$, $\sigma
({_1}\alpha_j) = {_{n+1}}\alpha_{j-1}$ and $\sigma ({_i}\alpha_1) =
{_{i-1}}\alpha_{n+1}$.

\begin{eg}
For the $4$-gons $P$ and $\bP$, the dissecting arrows $\alpha$ and
$\sigma\alpha$ are as follows:

\xy
\hskip 80pt
$P=$
\hskip 5pt
\POS(0,4) *+{\bullet}
\POS(10,4) \ar@{=} +(-10,0)_4
\ar@{-} +(0,-10)^3
\POS(10,-6) \ar@{-} +(-10,0)^2
\POS(0,4) \ar@{-} +(0,-10)_1
\POS(0,-6) \ar^{\alpha}@{->} +(6,9)
\POS(0,-6)
\hskip 60pt
$\bP= $
\hskip 5pt
\POS(0,4) *+{\bullet}
\POS(10,4) \ar@{=} +(-10,0)_1
\ar@{-} +(0,-10)^4
\POS(10,-6) \ar@{-} +(-10,0)^3
\POS(0,4) \ar@{-} +(0,-10)_2
\POS(0,4) \ar_{\sigma\alpha}@{->} +(9,-6)
\POS(10,-6)
\hskip 40pt .
\endxy

\end{eg}

For a general $d \in D(P)$, write $d = \{\alpha_1, \ldots \alpha_k\}$
and $\sigma d = \{\sigma\alpha_1, \ldots, \sigma \alpha_k\}$. To compare
$\Lambda_{\phi_4} (P)$ and $\Lambda_{\phi_4} (\sigma P)$, I start with
dissections of $P$ with one arrow. There are two cases to consider.

\begin{enumerate}
\item The dissecting arrow $\alpha$ starts at the first vertex.  The
  first vertex is in both $P_\alpha^r$ and in $P_\alpha^l$.  The
  associated subpolygons $P$ are related to the subpolygons of $\bP$
  by \bas \sigma(P_\alpha^r) = (\bP)_{\sigma \alpha}^r\quad; \quad
  \sigma(P_\alpha^l) = (\bP)_{\sigma \alpha}^l \eas as illustrated in
  the following example:

\bas P=
\xy
\POS(0,4) *+{\bullet}
\POS(10,4) \ar@{=} +(-10,0)_4
\ar@{-} +(0,-10)^3
\POS(10,-6) \ar@{-} +(-10,0)^2
\POS(0,4) \ar@{-} +(0,-10)_1 \ar_{\alpha}@{->} +(9,-6)
\endxy
\rightarrow P_\alpha^l = 34; \; P_\alpha^r = 123 \eas 
\bas \bP= 
\xy
\POS(0,4) *+{\bullet}
\POS(10,4) \ar@{=} +(-10,0)_1
\ar@{-} +(0,-10)^4 \ar_{\sigma \alpha}@{->} +(-6,-9)
\POS(10,-6) \ar@{-} +(-10,0)^3
\POS(0,4) \ar@{-} +(0,-10)_2
\endxy
\rightarrow (\bP)_{\sigma\alpha}^l = 43; \; (\bP)_{\sigma \alpha}^r = 231 
\;.\eas

\item The dissecting arrow $\alpha$ does not start at the first
  vertex: There are three sub-cases.

\begin{enumerate}
\item The dissecting arrow $\alpha$ ends on the first edge in $P$
  (labeled 1). The first vertex is in $P_\alpha^r$. The dissected
  polygons of $P$ and $\bP$ are \bas \sigma(P_\alpha^r) =
  (\bP)_{\sigma\alpha}^r\quad; \quad P_\alpha^l =
  (\bP)_{\sigma{\alpha}}^l \eas as illustrated in the following example

\bas 
P=
\xy
\POS(0,4) *+{\bullet}
\POS(10,4) \ar@{=} +(-10,0)_4
\ar@{-} +(0,-10)^3
\POS(10,-6) \ar@{-} +(-10,0)^2 \ar_{\alpha}@{->} +(-9,6)
\POS(0,4) \ar@{-} +(0,-10)_1
\endxy
\rightarrow P_\alpha^l = 21; \; P_\alpha^r = 134 \eas \bas \bP= 
\xy
\POS(0,4) *+{\bullet}
\POS(10,4) \ar@{=} +(-10,0)_1
\ar@{-} +(0,-10)^4
\POS(10,-6) \ar@{-} +(-10,0)^3
\POS(0,4) \ar@{-} +(0,-10)_2
\POS(0,-6) \ar_{\sigma \alpha}@{->} +(6,9)
\endxy
\rightarrow (\bP)_{\sigma \alpha}^l = 21; \; (\bP)_{\sigma\alpha}^r = 341 
\quad  .\eas

\item The dissecting arrow $\alpha$ ends on the root edge in $P$
  (labeled n). The first vertex is in $P_\alpha^l$. The dissected
  polygons of $P$ and $\bP$ are \bas P_\alpha^r =
  (\bP)_{\sigma\alpha}^r\quad; \quad \sigma(P_\alpha^l) = \bP_{\sigma
    \alpha}^l \eas as illustrated in the following example
\bas P=
\xy
\POS(0,4) *+{\bullet}
\POS(10,4) \ar@{=} +(-10,0)_4
\ar@{-} +(0,-10)^3
\POS(10,-6) \ar@{-} +(-10,0)^2 \ar_{\alpha}@{->} +(-6,9)
\POS(0,4) \ar@{-} +(0,-10)_1
\endxy
\rightarrow P_\alpha^l = 124; \; P_\alpha^r = 34 \eas
\bas \bP =
\xy
\POS(0,4) *+{\bullet}
\POS(10,4) \ar@{=} +(-10,0)_1
\ar@{-} +(0,-10)^4
\POS(0,-6) \ar^{\sigma \alpha}@{->} +(9,6)
\POS(10,-6) \ar@{-} +(-10,0)^3
\POS(0,4) \ar@{-} +(0,-10)_2
\endxy
\rightarrow \bP_{\sigma \alpha}^l = 241; \; (\bP)_{\sigma \alpha}^r = 34 \quad . \eas

\item The dissecting arrow $\alpha$ ends on neither the first edge or
  root edge in $P$. The root vertex is in $P_\alpha^{l}$ if $\alpha$
  is forward ($P_\alpha^{r}$ if $\alpha$ backward). The dissected
  polygons of $P$ and $\bP$ are \bas \sigma(P_\alpha^{l}) =
  (\bP)_{\sigma\alpha}^{l}\quad; \quad P_\alpha^{r} =
  (\bP)_{\sigma\alpha}^{r} \eas if $\alpha$ is forwards, and \bas
  \sigma(P_\alpha^{r}) = (\bP)_{\sigma\alpha}^{r}\quad; \quad
  P_\alpha^{l} = (\bP)_{\sigma\alpha}^{l} \eas if $\alpha$ is
  backwards. This is illustrated in the following example
\bas P = 
\xy
\POS(0,4) *+{\bullet}
\POS(10,4) \ar@{=} +(-10,0)_4
\ar@{-} +(0,-10)^3
\POS(10,-6) \ar@{-} +(-10,0)^2
\POS(0,4) \ar@{-} +(0,-10)_1
\POS(0,-6) \ar_{\alpha}@{->} +(9,6)
\endxy
\rightarrow P_\alpha^l = 134; \; P_\alpha^r = 23 \eas 
\bas \bP= \xy 
\POS(0,4) *+{\bullet}
\POS(0,4) \ar^{\sigma \alpha}@{->} +(6,-9)
\POS(10,4) \ar@{=} +(-10,0)_1
\ar@{-} +(0,-10)^4
\POS(10,-6) \ar@{-} +(-10,0)^3
\POS(0,4) \ar@{-} +(0,-10)_2
\endxy 
\rightarrow (\bP)_{\sigma \alpha}^l = 341; \; (\bP)_{\sigma \alpha}^r = 23 
\quad  . \eas
\end{enumerate}

\end{enumerate}

This exhaustively categorizes all possible dissecting arrows. I
summarize the results as follows.

\begin{lem}
Let $P$ be an $R$-deco polygon. For arrows of the form
${_1}\alpha_j$, starting at the first vertex, both subpolygons
$P_\alpha^r$ and $P_\alpha^l$ contain the first
vertex. The subpolygons of $P$ associated to a
single dissecting arrow can be classified in the following way:

\bas (\bP)_{\sigma \alpha}^r =
\begin{cases} \sigma(P_\alpha^r) &
\text{if $P_\alpha^r$ contains the first vertex of $P$} \\ P_\alpha^r &
\text{otherwise }
\end{cases} \eas
The same is true if $r$ is replaced with $l$.  \label{sigmasyms}
\end{lem}

To calculate the action of the operator $\sigma$ on the Hopf algebra
$B_{\phi_4}$, I compare terms in the Hopf algebra
$B_{\phi_4}$ to two new algebras $B_{\phi_{fv}}$ and
$B_{\phi_{\ofv}}$, defined by new generating sets $\phi_{fv}$ and
$\phi_{\ofv}$ that exploit the symmetries defined in lemma
\ref{sigmasyms}.

\begin{dfn}
Let $fv(P)$ be set of arrows that start at the \emph{f}irst
\emph{v}ertex of a polygon $P$.  If $P \in V_n(R)$, write $fv(P)=
\{\alpha_2, \dots, \alpha_{n+1}\}$ where $\alpha_i$ ends at the
$i^{th}$ side. Define $\ofv(\sigma P) = \{\sigma \alpha_2, \dots, \sigma
\alpha_{n+1}\}$ to be the set of arrows that start at the $(n+1)^{th}$ (last)
vertex of $\bP$. 
 \end{dfn}

Define the generating sets $\phi_{fv}$ and $\phi_{\ofv}$ to define the
same differential as $\phi_4$. Their difference set from $\phi_4$ is
$\cup_P fv(P)$ and $\cup_P \ofv(P)$ respectively. Let
$\Tsub{\phi_{fv}}$ and $\Tsub{\phi_{\ofv}}$ be the two almost $\phi_4$
compatible algebras defined by the dual tree generating sets
$\phi_{fv}$ and $\phi_{\ofv}$.

Let $P$ be an $R$-deco polygon of weight $n$. Consider non-trivial
dissections $d \in D(P)$, with $d \cap fv(P) = \{\alpha_{i_1} \ldots
\alpha_{i_j}\}$, with $i_1 < i_2 \ldots < i_j$. Let $\{P_d^0, \ldots
P_d^{|d|}\}$ be the set of polygons decorating $T_{\phi_{fv}, d}(P)$
and $T_{\phi_4, d}(P)$, with each $P_d^i \in V_{n_i}(R)$ with
$\sum_{i=0}^{|d|} n_i = n$. For $m\leq j$, the polygons $P_d^{m-1}$
and $P_d^m$ are adjacent, connected by $\alpha_{i_m}$, and $P_d^{m-1}
\prec P_d^{m}$ in $T_{\phi_{fv},d}(P)$ and $P_d^{m} \prec P_d^{m-1}$
in $T_{\phi_4,d}(P)$. If $d \cap fv(P) = \emptyset$, $P_d^0$ is the
subpolygon containing the first vertex of $P$. The set $\{\sigma
P_d^0, \ldots \sigma P_d^j, P_d^{j+1}, \ldots P_d^{|d|}\}$ decorates
the generator $T_{\phi_4, \sigma d}(\bP)$ with $\bP_d^{m} \prec
\bP_d^{m-1}$, and $T_{\phi_{\ofv}, \sigma d}(\bP)$ with $\bP_d^{m-1}
\prec \bP_d^{m}$. The subpolygons $\bP_d^{m-1}$ and $\bP_d^{m}$ are
adjacent, connected by $\sigma \alpha_{i_m}$. If $d \cap fv(P) =
\emptyset$, then $\sigma P_0$ is the subpolygon containing the last
vertex of $\bP$. Recall that \bas \sgn (T_{fv, d}( P)) = \sgn
(T_{\ofv, \sigma d}(\bP)) = (-1)^{|d \cap fv(P)|} \eas while $\sgn
(T_{\phi_4, d}( P)) = 1$.  If $\beta \not \in fv(P)$, write $d_\beta =
d \cap D(P_\beta^\rst)$. The polygon $P_\beta^\rst$ does not contain
the first vertex of $P$, therefore $P_\beta^\rst = (\sigma P)_{\sigma
  \beta}^\rst$. By lemma \ref{sigmasyms}, the pruned subtrees
corresponding to the admissible cut $c = \beta$ in $T_{\phi_4, d}(P)$,
and $T_{\phi_{fv}, d}(P)$ are the same: $T_{\phi_4,
  d_\beta}(P_\beta^\rst)$. Similarly for $\sigma c = \sigma \beta$ in
$T_{\phi_{\ofv}, \sigma d}(\sigma P)$ and $T_{\phi_{4}, \sigma
  d}(\sigma P)$ the pruned trees are both $T_{\phi_4, \sigma
  d_\beta}(\sigma P_\beta^\rst)$. I summarize this in the following
diagrams. Notice that $P_d^0$ and $P_d^j$ need not be a root vertex of
these generators. These trees are drawn without a root vertex
specified.  \bas T_{\phi_{4}, d}(P) = \xy\POS(0,9) *+{\bullet}
\POS(3,11) *{^{P_d^j}} \POS(0,9) \ar@{-}_{\epsilon} +(-6,-6)
\ar@{-}^{\alpha_{i_j}} +(6,-6) \POS(6,3) *+{\bullet} \POS(-6,3)
*+{\bullet} \POS(8,5) *{^{P_d^{j-1}}} \POS(-15,3) *{^{T_{\phi_4,
      d_\epsilon}(P_\epsilon^\rst)}} \POS(6,3) \ar@{-}_{\delta}
+(-6,-6) \ar@{.} +(6,-6) \POS(0,-3) *+{\bullet} \POS(-9,-3)
*{^{T_{\phi_4, d_\delta}(P_\delta^\rst)}} \POS(12, -3) *+{\bullet}
\POS(14,-1) *{^{P_d^1}} \POS(12, -3) \ar@{-}_{\gamma} +(-6,-6)
\ar@{-}^{\alpha_{i_1}} +(6,-6) \POS(6,-9) *+{\bullet} \POS(18,-9)
*+{\bullet} \POS(-3,-9) *{^{T_{\phi_4, d_\gamma}(P_\gamma^\rst)}}
\POS(20,-7) *{^{P_d^0 }} \POS(3,-15) *{^{T_{\phi_4,
      d_\beta}(P_\beta^\rst)}} \POS(18,-9) \ar@{-}_{\beta} +(-6,-6)
\POS(12,-15) *+{\bullet} \endxy \eas
\vspace{.5cm} 
\bas T_{\phi_{fv}, d}(P) = (-1)^j \xy\POS(0,9)
*+{\bullet} \POS(3,11) *{^{P_d^0}} \POS(0,9) \ar@{-}_{\beta}
+(-6,-6) \ar@{-}^{\alpha_{i_1}} +(6,-6) \POS(6,3) *+{\bullet}
\POS(-6,3) *+{\bullet} \POS(8,5) *{^{P_d^1}} \POS(-15,3) *{^{T_{\phi_4,
      d_\beta}(P_\beta^\rst)}} \POS(6,3) \ar@{-}_{\gamma} +(-6,-6)
\ar@{.} +(6,-6) \POS(0,-3) *+{\bullet} \POS(-9,-3) *{^{T_{\phi_4,
      d_\gamma}(P_\gamma^\rst)}} \POS(12, -3) *+{\bullet} \POS(14,-1)
*{^{P_d^{j-1}}} \POS(12, -3) \ar@{-}_{\delta} +(-6,-6)
\ar@{-}^{\alpha_{i_j}} +(6,-6) \POS(6,-9) *+{\bullet} \POS(18,-9)
*+{\bullet} \POS(-3,-9) *{^{T_{\phi_4, d_\delta}(P_\delta^\rst)}}
\POS(20,-7) *{^{P_d^j}} \POS(3,-15) *{^{T_{\phi_4,
      d_\epsilon}(P_\epsilon^\rst)}} \POS(18,-9) \ar@{-}_{\epsilon}
+(-6,-6) \POS(12,-15) *+{\bullet} \endxy \eas
\bas T_{\phi_{4}, \sigma d}(\bP) = 
\xy\POS(0,9) *+{\bullet} \POS(3,11.5)
*{^{\bP_d^j}} \POS(0,9) \ar@{-}_{\sigma \epsilon} +(-6,-6)
\ar@{-}^{\sigma \alpha_{i_j}} +(6,-6) \POS(6,3) *+{\bullet} \POS(-6,3)
*+{\bullet} \POS(9,5) *{^{\bP_d^{j-1}}} \POS(-15,3) *{^{T_{\phi_4,
      d_\epsilon}(P_\epsilon^\rst)}} \POS(6,3) \ar@{-}_{\sigma \delta}
+(-6,-6) \ar@{.} +(6,-6) \POS(0,-3) *+{\bullet} \POS(-9,-3)
*{^{T_{\phi_4, d_\delta}(P_\delta^\rst)}} \POS(12, -3) *+{\bullet}
\POS(14,-1) *{^{\bP_d^1}} \POS(12, -3) \ar@{-}_{\sigma \gamma} +(-6,-6)
\ar@{-}^{\sigma \alpha_{i_1}} +(6,-6) \POS(6,-9) *+{\bullet} \POS(18,-9)
*+{\bullet} \POS(-3,-9) *{^{T_{\phi_4, d_\gamma}(P_\gamma^\rst)}}
\POS(20,-7) *{^{\bP_d^0 }} \POS(3,-15) *{^{T_{\phi_4,
      d_\beta}(P_\beta^\rst)}} \POS(18,-9) \ar@{-}_{\beta} +(-6,-6)
\POS(12,-15) *+{\bullet} \endxy \eas
\bas T_{\phi_{\ofv}, \sigma d}(\bP) = (-1)^j\xy\POS(0,9)
*+{\bullet} \POS(3,11) *{^{\bP_d^0}} \POS(0,9) \ar@{-}_{\sigma \beta}
+(-6,-6) \ar@{-}^{\sigma \alpha_{i_1}} +(6,-6) \POS(6,3) *+{\bullet}
\POS(-6,3) *+{\bullet} \POS(8,5) *{^{\bP_d^1}} \POS(-15,3) *{^{T_{\phi_4,
      d_\beta}(P_\beta^\rst)}} \POS(6,3) \ar@{-}_{\sigma \gamma} +(-6,-6)
\ar@{.} +(6,-6) \POS(0,-3) *+{\bullet} \POS(-9,-3) *{^{T_{\phi_4,
      d_\gamma}(P_\gamma^\rst)}} \POS(12, -3) *+{\bullet} \POS(15,-1)
*{^{\bP_d^{j-1}}} \POS(12, -3) \ar@{-}_{\sigma \delta} +(-6,-6)
\ar@{-}^{\sigma \alpha_{i_j}} +(6,-6) \POS(6,-9) *+{\bullet} \POS(18,-9)
*+{\bullet} \POS(-3,-9) *{^{T_{\phi_4, d_\delta}(P_\delta^\rst)}}
\POS(20,-7) *{^{\bP_d^j}} \POS(3,-15) *{^{T_{\phi_4,
      d_\epsilon}(P_\epsilon^\rst)}} \POS(18,-9) \ar@{-}_{\sigma \epsilon}
+(-6,-6) \POS(12,-15) *+{\bullet} \endxy \eas
\vspace{.5cm} 

These diagrams also illustrate the following lemma. 

\begin{lem} 
Consider $c \in D(P)$ such that $T_{\phi_{fv}, c}(P)$ (and $T_{\phi_{\ofv}, \sigma
  c}(\bP)$) only have root and leaf vertexes. Let $\{P_c^0, \ldots
P_c^{|c|}\}$ be the set of labels of $T_{\phi_{fv}, c}(P)$, with $\{P_c^0,
\ldots P_c^m\}$ labeling the root vertexes. Then \bas \Delta_c
\Lambda_{\phi_{fv}}(P) = \sha_{j=0}^m \Lambda_{*}(P_c^j) \otimes
\sha_{i=m+1}^{|c|} \Lambda_{*}P_c^i \eas with \bas * = \begin{cases}
  \phi_{fv} & \textrm{ if } (P_c^i) \textrm{ contains first vertex of } P
  \\ \phi_4 & \textrm{ else. } \end{cases} \;. \eas
Similarly, \bas \Delta_{\sigma c}
\Lambda_{\ofv}(\bP) = \sha_{j=0}^m \Lambda_{*}((\bP)_{\sigma c}^j) \otimes
\sha_{i=m+1}^{|c|} \Lambda_{*}((\bP)_{\sigma c}^i) \eas with \bas * = \begin{cases}
  \phi_{\ofv} & \textrm{ if } P_c^i \textrm{ contains first vertex of } P
  \\ \phi_4 & \textrm{ else. } \end{cases} \eas
\label{fv-ofvcuts}\end{lem}

\begin{proof}
For a fixed $c$ as above and any dissection $d \in D(P)$, such that $c
\subset d$, write $d = c \cup_{i=1}^{|c|} d_i$ with $d_i \in
D(P_c^i)$. By definition of $\phi_{fv}$, $ d_i \cap fv(P) = \emptyset$
if and only if $P_c^i$ does not inherit the first vertex from $P$. That is, if
and only if \bas T_{\phi_{fv}, d_i}(P_c^i) = T_{\phi_4, d_i}(P_c^i) \;.\eas
Varying over all dissections $d$ containing $c$ shows that \bas
\Lambda_{\phi_{fv}}(P_c^i) = \Lambda_{\phi_4}(P_c^i) \; \eas if and only
if $P_c^i$ does not inherit the first vertex of $P$.

\end{proof}

By abuse of notation, call the dissections $c \in D(P)$ such that
$T_{\phi_{fv}, c}(P)$ only has leaf and root vertexes the admissible
dissections of $P$ in $\phi_{fv}$. 

\begin{cor}
Using the definitions above, write \bas \Delta \Lambda_{\phi_{fv}}(P) =
\sum_{c \textrm{ admis. dis.}} \Delta_c \Lambda_{\phi_{fv}}(P)\; \eas and \bas
\Delta \Lambda_{\phi_{\ofv}}(\bP) = \sum_{c \textrm{ admis. dis.}} \Delta_c
\Lambda_{\phi_{\ofv}}(\bP)\;. \eas
\label{fv-ofvcoprod} \end{cor}

Instead of calculating $\Lambda_{\phi_4} (P) - \Lambda_{\phi_4}
(\sigma P)$, I calculate the expression \begin{multline} \left(\Lambda_{\phi_4}
(P))-\Lambda_{\phi_{fv}} (P))\right) - \left(\Lambda_{\phi_4} (\sigma
P)) - \Lambda_{\phi_{\ofv}} (\sigma P))\right) +  \\
\left(\Lambda_{\phi_{fv}} (P)) - \Lambda_{\phi_{\ofv}} (\sigma
P)\right) \label{newrel} \;. \end{multline} This is done in steps. The first two
terms of \eqref{newrel} are derived from theorem \ref{edgeswitch};
the third term can be calculated up to a primitive coideal.

\begin{lem}
Let $P= 1\ldots n+1$ be an $R$-deco polygon. For $d \subset fv(P)$,
let $\{P_d^0, \ldots P_d^j\}$ be the polygons decorating $T_{\phi_4, d}(P)$. There are two expressions for \bas
\left(\Lambda_{\phi_4} (P)-\Lambda_{\phi_{fv}} (P)\right) -
\left(\Lambda_{\phi_4} (\sigma P) - \Lambda_{\phi_{\ofv}} (\sigma
P)\right) \eas
\begin{enumerate} 
\item $ = \sum_{i=1}^{n}\Lambda_{\phi_{fv}} (P_{\alpha_i}^l)\sha
  \Lambda_{\phi_4} (P_{\alpha_i}^r) -
  \sum_{i=2}^{n}\Lambda_{\phi_4} ((\sigma P)_{\sigma \alpha_i}^l)\sha
  \Lambda_{\phi_{\ofv}} ((\sigma P)_{\sigma\alpha_i}^r )$
   \label{mixed} 
\item $ =-\sum_{d \in fv(P)} (-1)^{|d|} \sha_{j=0}^{|d|}
  \Lambda_{\phi_4}(P_d^j) - \sha_{j=0}^{|d|}
  \Lambda_{\phi_4}(\sigma (P_d^j))\;. $
     \label{onlyphi4}
\end{enumerate}
\label{phi4-fv}
\end{lem}

\begin{proof}
To see expression \eqref{mixed}, notice that for $\alpha_i, \alpha_j
\in fv(P)$, with $i <j$, $\alpha_i$ dissects the subpolygon
$P_{\alpha_j}^r$, and $\alpha_j$ dissects the subpolygon
$P_{\alpha_i}^l$. For $\sigma\alpha_i, \sigma \alpha_j \in \ofv(\bP)$,
with $i <j$, $\sigma \alpha_i$ dissects the subpolygon $(\bP)_{\sigma
  \alpha_j}^r$, and $\sigma \alpha_j$ dissects the subpolygon
$(\bP)_{\sigma \alpha_i}^l$. Therefore the generators $T_{\phi_{fv},
  fv(P)}(P)$ and $T_{\phi_{\ofv}, \ofv(P)}(\bP)$ are linear, and the
result follows form corollary \ref{lineartrees} and
\ref{lineartreesleaf}.

To see expression \eqref{onlyphi4}, theorem \ref{edgeswitch} gives
\bas \left(\Lambda_{\phi_4} (P)-\Lambda_{\phi_{fv}} (P)\right) =
\sum_{\begin{subarray}{c} d \subset fv(P) \\ d \neq
    \emptyset\end{subarray}} (-1)^{|d|+1} \sha_{j=0}^{|d|}
\Lambda_{\phi_4}(P_d^j) \;,  \eas and \bas \left(\Lambda_{\phi_4} (\bP)-\Lambda_{\phi_{\ofv}} (\bP)\right) =
\sum_{\begin{subarray}{c} d \subset fv(P) \\ d \neq
    \emptyset\end{subarray}} (-1)^{|d|+1} \sha_{j=0}^{|d|}
\Lambda_{\phi_4}(\sigma (P_d^j)) \;.  \eas
\end{proof}

In the following example, I compute $\left(\Lambda_{\phi_4}
(P)-\Lambda_{\phi_{fv}} (P)\right) $ and $ \left(\Lambda_{\phi_4}
(\sigma P) - \Lambda_{\phi_{\ofv}} (\sigma P)\right)$ for a polygon of
weight 3.

\begin{eg} 
Consider $P = 1234$. By expression \eqref{onlyphi4} of lemma
\ref{phi4-fv}, one has \bas \Lambda_{\phi_4}
(P)-\Lambda_{\phi_{fv}}(P) = \Lambda_{\phi_4} (12) \sha
\Lambda_{\phi_4} (234) + \\ \Lambda_{\phi_4} (123) \sha
\Lambda_{\phi_4} (34) - \Lambda_{\phi_4} (12) \sha \Lambda_{\phi_4}
(23) \sha \Lambda_{\phi_4} (34) \;.\eas Applying theorem \ref{relatethm}
gives \bas q_2(\Lambda_{\phi_4} (P)-\Lambda_{\phi_{fv}}(P)) =
q_2(\Lambda_{\phi_2} (12) \sha (\Lambda_{\phi_2} (234) +
\Lambda_{\phi_2}(24)\sha \Lambda_{\phi_2}(32)) +\\ (\Lambda_{\phi_2} (123) +
\Lambda_{\phi_2}(13) \sha \Lambda_{\phi_2}(21))\sha \Lambda_{\phi_2}
(34) - \\ \Lambda_{\phi_2} (12) \sha \Lambda_{\phi_2} (23) \sha
\Lambda_{\phi_2} (34))\;. \eas The expression involving $\bP = 2341$
is \bas \Lambda_{\phi_4} (\sigma P)-\Lambda_{\phi_{\ofv}}(\bP) =
\Lambda_{\phi_4} (21) \sha \Lambda_{\phi_4} (342) +
\\ \Lambda_{\phi_4} (231) \sha \Lambda_{\phi_4} (43) -
\Lambda_{\phi_4} (43)\sha \Lambda_{\phi_4} (32) \sha \Lambda_{\phi_4}
(21) \; .\eas By expression \ref{onlyphi4} of lemma \ref{phi4-fv}, one
has \bas q_2(\Lambda_{\phi_4} (\sigma P)-\Lambda_{\phi_{fv}}(\bP)) =
q_2(\Lambda_{\phi_2} (21) \sha (\Lambda_{\phi_2} (342) +
\Lambda_{\phi_2} (32) \sha \Lambda_{\phi_2} (43)) +
\\ (\Lambda_{\phi_2} (231) + \Lambda_{\phi_2} (21) \sha
\Lambda_{\phi_2}(32)) \sha \Lambda_{\phi_2} (43) \\ - \Lambda_{\phi_2}
(43)\sha \Lambda_{\phi_2} (32) \sha \Lambda_{\phi_2} (21)) \; .\eas

Adding the two expressions gives \bas q_2(\Lambda_{\phi_4}
(P)-\Lambda_{\phi_{fv}}(P) -\Lambda_{\phi_4} (\sigma
P)+\Lambda_{\phi_{\ofv}}(\bP)) = \\ q_2( \Lambda_{\phi_2} (12) \sha
\Lambda_{\phi_2} (234) + \Lambda_{\phi_2} (123)\sha \Lambda_{\phi_2}
(34) \\ - \Lambda_{\phi_2} (231) \sha \Lambda_{\phi_2} (43) -
\Lambda_{\phi_2} (21) \sha \Lambda_{\phi_2} (342) \\ +
\Lambda_{\phi_2} (12) \sha \Lambda_{\phi_2}(24)\sha
\Lambda_{\phi_2}(32) + \Lambda_{\phi_2}(13) \sha
\Lambda_{\phi_2}(21))\sha \Lambda_{\phi_2} (34) \\- \Lambda_{\phi_2}
(12) \sha \Lambda_{\phi_2} (23) \sha \Lambda_{\phi_2} (34)) -
\Lambda_{\phi_2} (21)\sha \Lambda_{\phi_2} (32) \sha \Lambda_{\phi_2}
(43) \;. \eas

\end{eg}

The rest of this section calculates the third term in \eqref{newrel}.

\begin{dfn}
Let $J_n\subset \mathcal{P}_\bullet^{(1)}(R)$ be the linear subspace
generated by $\{P - \sigma P | P \textrm{ polygon of weight }
n\}$. These are primitive coideals in $B_{\phi_4}$.
\end{dfn}

\begin{thm} 
Define the quotient map \bas r_n: \bar{T}(V(R)) \rightarrow
\bar{T}(V(R))/ (\sum_{k= 1}^n J_k) \;. \eas Then for $P$ a
polygon of weight $n$, \bas r_n(\left(\Lambda_{\phi_{fv}} (P)) -
\Lambda_{\phi_{\ofv}} (\sigma P)\right)) = 0
\;.\eas \label{wholeideal}\end{thm}

The coideals $J_n$ are problematic in this context since $J_1$ is not
in the kernel of $\Phi$ (equation \eqref{GGLmap}), \ba \Phi([ab] -
[ba]) = \Li(\frac{a}{b}) - \Li(\frac{b}{a}) = \ln(b) -
\ln(a)\;,\label{phiimage}\ea as shown in example
\ref{2&3gons}. Instead, I work with a modified quotient map.

\begin{dfn}
Define an ideal $\tilde J_1 = \{ac-ca + cb-bc - ab+ba\}$ to reflect
the image of $\Phi$ restricted to 2-gons. Define the quotient map
\bas \tilde{r}_n: \bar{T}(V(R)) \rightarrow \bar{T}(V(R))/ (\tilde J_1
+ \sum_{k= 2}^n J_k) \;. \eas
\end{dfn}

I need the following definition to calculate $\tilde{r}_n(\Lambda_{\phi_{fv}}(P) - \Lambda_{\phi_{\ofv}}(P))$.

\begin{dfn} 
For $P= 1\ldots {n+1}$ be polygon of weight $n$, define two
polygons $A_P= 2\ldots {n+1}$ and $B_P=2\ldots n1$ of weight $n-1$. The
polygon $A_P$ can be drawn as a subpolygon of $P$, $A_P =
P_{\alpha_2}^l$. Define \bas A_{P,\alpha_i}^l = P_{\alpha_{i+1}}^l
\text{, and } A_{P, \alpha_i}^r = P_{_2\alpha_{i+1}}^r\;.\eas Similarly,
$B_P$ can be drawn as a subpolygon of $\sigma P$, $B_P = (\bP)_{\sigma
  \alpha_n}^r$. Define \bas B_{P,\sigma \alpha_i}^l = \sigma
(P_{{_{n+1}}\alpha_i}^l) = (\sigma P)_{\sigma ({_{n+1}}\alpha_i)}^l
\text{, and } B_{P, \sigma \alpha_i}^r = \sigma (P_{\alpha_i}^r) =
(\sigma P)_{\sigma \alpha_i}^r \; .\eas 
\label{ABdef}\end{dfn}

Recall that in the above notation, when I write $P_\beta^r$, I mean $\beta
\in D(P)$. For example, when I write $A_{P, \alpha_i}^l$,
$\alpha_i \in D(A_P)$ while in $P_{\alpha_{i+1}}^l$, $\alpha_{i+1} \in
D(P)$. 

\begin{eg} 
The following diagram shows $A_{P,\alpha_4}^l$, $A_{P,\alpha_4}^r$,
$B_{\bP,\sigma \alpha_4}^l$ and $B_{\bP, \sigma \alpha_4}^r$ \bas \xy (0,0)
\POS(0,0) \ar@{-} +(10,0)_4 \POS(10,0) \ar@{-} +(7,7)_5 \POS(17,7)
\ar@{-} +(0,10)_6 \POS(17,17) \ar@{-} +(-7,7)_7 \POS(10,24) \ar@{=}
+(-10,0)_8 \POS(0,24) \ar@{-} +(-7,-7)_1 \POS(0,24)*+{\bullet}
\POS(-7,17) \ar@{-} +(0,-10)_2 \POS(-7,7) \ar@{-} +(7,-7)_3
\POS(0,24) \ar@{->} +(14,-20)
\POS(-7,17)  \ar@{->} +(19,-15)
\POS(12,14) *{A^l_{P,\alpha_4}}
\POS(2,4) *{A^r_{P,\alpha_4}}
\endxy
\hskip 70pt
\xy
\POS(0,0)  \ar@{-} +(10,0)_5
\POS(10,0) \ar@{-} +(7,7)_6
\POS(17,7) \ar@{-} +(0,10)_7
\POS(17,17) \ar@{-} +(-7,7)_8
\POS(10,24) \ar@{=} +(-10,0)_1
\POS(0,24) \ar@{-} +(-7,-7)_2 \POS(0,24)*+{\bullet}
\POS(-7,17) \ar@{-} +(0,-10)_3
\POS(-7,7) \ar@{-} +(7,-7)_4
\POS(10,24) \ar@{->} +(-14,-20)
\POS(17,17) \ar@{->} +(-19,-15) 
\POS(8,5) *{B^l_{P,\sigma \alpha_4}}
\POS(-2,16) *{B^r_{P,\sigma \alpha_4}} \endxy \eas  \label{ABpict}
\end{eg}
 
\begin{dfn}
 Write $\{\alpha, d, P\} = \{v_{r,\alpha}, v_{l,\alpha}\}$ to indicate
 the set of subpolygons to the right
 and left of the dissecting arrow $\alpha \in d \in D(P)$. \end{dfn}

\begin{eg} 
For example, $\{\beta,d, P\}$, for the polygons $P=123456$ and
dissection $d= \{\alpha, \beta, \gamma\}$ as drawn below, \bas \xy
\POS(0,4) *+{\bullet} \POS(10,4) \ar@{=} +(-10,0)_6 \ar@{-} +(6,-6)^5
\POS(16,-2)\ar@{-} +(-6,-6)^4 \POS(0,-8)\ar@{-} +(10,0)_3 \ar@{-}
+(-6,6)^2 \POS(0,4)\ar@{-} +(-6,-6)_1 \POS(-6,-2) \ar^{\alpha}@{->}
+(14,6) \POS(10,4) \ar^{\beta}@{->} +(-12,-10) \POS(10,-8)
\ar^{\gamma}@{->} +(4,8) \endxy \eas is given by $\{\beta, P\} =
\{v_{r,\beta}=26, v_{l,\beta}=352\}$
\end{eg}

I now construct a generalization of the insertion operators
$\star_{\prec v}$ and $\star_{\succ v}$, defined in definition
\ref{grafting} to apply to words of the form $\Lambda_\phi(P)$.

\begin{dfn}
Consider $u \in  V(R)$, and $P$ and
$R$-deco polygon. Define \bas u \star_{\prec \{\alpha, P\}}
\Lambda_\phi(P) & = \sum_{\begin{subarray}{c}d \in D(P)\\ d = d^r \cup
    d^l \cup \alpha \end{subarray}} u \star_{ \prec \{\alpha,d, P\}}
\Lambda_\phi(T_{\phi,d^r}(P_\alpha^r)) \sha
\Lambda_\phi(T_{\phi,d^l}(P_\alpha^l)) \eas where $d^r \in
D(P_\alpha^r)$ and $d^l\in D(P_\alpha^l)$. Similarly \bas u \star_{
  \succ \{\alpha, P\}} \Lambda_\phi(P) & = \sum_{\begin{subarray}{c}d
    \in D(P)\\ d = d^r \cup d^l \cup \alpha \end{subarray}} u \star_{
  \succ \{\alpha, d,P\}} \Lambda_\phi(T_{\phi,d^r}(P_\alpha^r)) \sha
\Lambda_\phi(T_{\phi,d^l}(P_\alpha^l)) \eas
\label{LambdaB}\end{dfn}

The next theorem gives an expression for $\Lambda_{\phi_{fv}}(P) -
\Lambda_{\phi_{\ofv}}(\bP)$.

\begin{thm}
Let $P$, $A_P$, $B_P$ and associated subpolygons be as above. Define the
quotient map \bas \tilde r_n: \Lambda(\Tsub{\phi_4}) \rightarrow
\Lambda(\Tsub{\phi_4})/ (\tilde J_1 + \sum_{k= 2}^n J_k) \;, \eas For $P$ an
$R$-deco polygon of weight $n \geq 2$, \begin{multline} \tilde
  r_n\left(\Lambda_{\phi_{fv}} (P) - \Lambda_{\phi_{\ofv}} (\sigma
  P))\right) = \\ \sum_{i=2}^{n} \tilde r_n \left(
  (i(n+1)-(n+1)i)\star_{\prec \{\alpha_{i-1}, A_P\} } \Lambda_{\phi_4}
  (A_{P,\alpha_{i-1}}^r) \sha \Lambda_{\phi_{fv}} (A_{P,\alpha_{i-1}}^l)
    \right. \\
- \left. (i1-1i) \star_{\succ \{\sigma
    \alpha_i, B_P\}} \Lambda_{\phi_4} (B_{P,\sigma \alpha_i}^l)\sha
  \Lambda_{\phi_{\ofv}} (B_{P, \sigma \alpha_i}^r) \right) \\ 
  \;.  \label{fv-ofv} \end{multline}
\label{graftedpolygons}\end{thm}

\begin{rem}
Notice that \begin{multline} i(n+1)-(n+1)i\star_{\prec
    \{\alpha_{i-1}, A_P\} } (\Lambda_{\phi_4} (A_{P,\alpha_{i-1}}^r)
  \sha \Lambda_{\phi_{fv}} (A_{P,\alpha_{i-1}}^l)) =
  \\ i(n+1)\star_{\prec \{\alpha_{i-1}, A_P\} }
  (\Lambda_{\phi_4} (A_{P,\alpha_{i-1}}^r) \sha \Lambda_{\phi_{fv}}
  (A_{P,\alpha_{i-1}}^l)) \\ - (n+1)i\star_{\prec
    \{\alpha_{i-1}, A_P\} } (\Lambda_{\phi_4} (A_{P,\alpha_{i-1}}^r)
  \sha \Lambda_{\phi_{fv}} (A_{P,\alpha_{i-1}}^l)) \;.
\end{multline}
 \end{rem}

Before proving this theorem, I use it to prove theorem
\ref{wholeideal}.

\begin{proof}[Proof of theorem \ref{wholeideal}]
Since every term in theorem \ref{fv-ofv} calculation involves a term
in $J_1$, specifically, the newly inserted difference, \bas r_1\circ \tilde{r}_n(\Lambda_{fv}(P) -
\Lambda_{\ofv}(\bP)) = 0 \; \eas for any $R$-deco polygon $P$.
\end{proof}

I have now computed all the necessary terms for $\Lambda_{\phi_4}(P) -
\Lambda_{\phi_4} (\bP)$.  Let $P$ be a polygon of weight
$n$. Combining line \ref{onlyphi4} of lemma \ref{phi4-fv} with theorem
\ref{graftedpolygons} gives \bas \tilde r_n(\Lambda_{\phi_4}(P) -
\Lambda_{\phi_4}(\bP) )=-\sum_{d \in fv(P)} (-1)^{|d|}
\sha_{j=0}^{|d|}( \Lambda_{\phi_4}(P_d^j) - \sha_{j=0}^{|d|}
\Lambda_{\phi_4}(\sigma (P_d^j))) \\ + \tilde r_n \sum_{i=2}^{n}  \left(
i(n+1)-(n+1)i\star_{\prec
  \{\alpha_{i-1}, A_P\} } \Lambda_{\phi_4} (A_{P,\alpha_{i-1}}^r) \sha
\Lambda_{\phi_{fv}} (A_{P,\alpha_{i-1}}^l))  \right.  \\- \left. (i1-1i) \star_{\succ \{\sigma \alpha_i, B_P\}} \Lambda_{\phi_4}
(B_{P,\sigma \alpha_i}^l)\sha \Lambda_{\phi_{\ofv}} (B_{P, \sigma
  \alpha_i}^r)  \right) \;, \eas where
$\{P_d^0, \ldots P_d^{|d|}\}$ decorate the vertexes of $T_{\phi_4, d}(P)$,
for $d \subset fv(P)$.

\begin{proof}[Proof of theorem \ref{graftedpolygons}]
Let $c_r$ be an admissible dissection of $A_{P, \alpha_i-1}^r$ in
$\Lambda_{\phi_4}$ and $c_l$ of $A_{P, \alpha_i-1}^l$ in
$\Lambda_{\phi_{fv}}$. By definition \ref{LambdaB} and lemma
\ref{graftingcoprod}, \begin{multline*} \Delta \left( i(n+1)-(n+1)i
\star_{\prec \{\alpha_{i-1}, A_P\}} (\Lambda_{\phi_4}
(A_{P, \alpha_{i-1}}^l)\sha \Lambda_{\phi_{fv}} (A_{P,
  \alpha_{i-1}}^r) )\right) = \end{multline*} \begin{multline} \sum_{c_r, c_l}
\Lambda_{\phi_4}(R_{c_r}(A_{P, \alpha_{i-1}}^r)) \sha
\Lambda_{\phi_{fv}}(R_{c_l}(A_{P, \alpha_{i-1}}^l)) \\ \otimes i (n+1)-(n+1)i
\star_{\prec \{\alpha_{i-1}, A_P\}}
\left(\Lambda_{\phi_4}(L_{c_r}(A_{P, \alpha_{i-1}}^r)) \sha
\Lambda_{\phi_{fv}}(L_{c_l}(A_{P, \alpha_{i-1}}^l)) \right) \label{Tterms}
\end{multline}
\begin{multline} + \sum_{c_r, c_l}
i(n+1)-(n+1)i \star_{\prec \{\alpha_{i-1}, A_P\}}
  \left(\Lambda_{\phi_4}(R_{c_r}(A_{P, \alpha_{i-1}}^r)) \sha
  \Lambda_{\phi_{fv}}(R_{c_l}(A_{P, \alpha_{i-1}}^l)) \right)\\ \otimes
  \Lambda_{\phi_4}(L_{c_r}(A_{P, \alpha_{i-1}}^r)) \sha
  \Lambda_{\phi_{fv}}(L_{c_l}(A_{P, \alpha_{i-1}}^l))
  \label{cterms}\end{multline}
\begin{multline}  + \sum_{c_r, c_l}\left(i(n+1)-(n+1)i
\star_{\prec v_{r,\alpha_{i-1}}}
\Lambda_{\phi_4}(R_{c_r}(A_{P, \alpha_{i-1}}^r))\right) \sha
\Lambda_{\phi_{fv}}(R_{c_l}(A_{P, \alpha_{i-1}}^l)) \\ \otimes 
\Lambda_{\phi_4}(L_{c_r}(A_{P, \alpha_{i-1}}^r)) \sha
\Lambda_{\phi_{fv}}(L_{c_l}(A_{P, \alpha_{i-1}}^l))  \label{cmterms1}\end{multline}
\begin{multline} + \sum_{c_r, c_l}
\Lambda_{\phi_4}(R_{c_r}(A_{P, \alpha_{i-1}}^r)) \sha
\left(i(n+1)-(n+1)i \star_{\prec v_{l,\alpha_{i-1}}}
\Lambda_{\phi_{fv}}(R_{c_l}(A_{P, \alpha_{i-1}}^l)) \right)\\ \otimes
\Lambda_{\phi_4}(L_{c_r}(A_{P, \alpha_{i-1}}^r)) \sha
\Lambda_{\phi_{fv}}(L_{c_l}(A_{P, \alpha_{i-1}}^l))
\;. \label{cmterms2}\end{multline}

There is a similar expression for \bas \Delta \left( (i1-1i)
\star_{\succ \{\sigma\alpha_{i}, B_P\}} \Lambda_{\phi_{\ofv}}
(B_{P, \alpha_{i}}^l)\sha \Lambda_{\phi_4} (B_{P, \alpha_{i}}^r)
\right) \;.\eas This proof proceeds by comparing the coproduct of both
sides of \eqref{fv-ofv}. In fact, I only consider the coproduct of
terms involving the inserted sum $i(n+1) -(n+1)i$. The arguments for
$i1-1i$ are similar, and not done here.

This proof proceeds by induction. Notice from example \ref{2&3gons}
that this theorem holds for $n=2$, with $P= 123$. By expression
\eqref{onlyphi4} of lemma \ref{phi4-fv} \bas (\Lambda_{\phi_4} (P) -
\Lambda_{\phi_{fv}} (P)) - (\Lambda_{\phi_{4}} (\sigma P) -
\Lambda_{\phi_{\ofv}} (\sigma P)) = 23 \sha 12 - 21 \sha 32 \;. \eas
Comparing this to the expression \eqref{3gonphi4} in example
\ref{2&3gons} gives \begin{multline*} \Lambda_{\phi_{fv}} (P) -
\Lambda_{\phi_{\ofv}} (\sigma P) = \\ [123] -[231] + [13-31 - 12+21| 23]
+ [21 | 13-31 - 23 + 32]\;.\end{multline*} In this case, the polygons $A = 23$,
and $B =21$. Recall that modulo $\tilde J_1$,  $13-31 - 12+21 =23 -32$ and  $13-31 - 23 + 32 = 12-21$. Therefore, under the quotient map $\tilde r_n$, \bas
\Lambda_{\phi_{fv}} (P) - \Lambda_{\phi_{\ofv}} (\sigma P) =  [23-32|
  23] - [21 | 21-12]\;,\eas as desired.  Suppose equation
\eqref{fv-ofv} holds for all polygons of weight $m$ for $m < n$.

Consider a general polygon $P$ of weight $n$. Let $c$ be an admissible
dissection of $P$ in $\phi_{fv}$ as in corollary
\ref{fv-ofvcoprod}. Since the generator $T_{\phi_{fv}, fv(P)}(P)$ is
linear, $c$ contains at most one arrow in $fv(P)$. Therefore, there
are two cases to consider: $c \cap fv(P) = \emptyset$, and $|c \cap
fv(P)| = 1$.

If $c \cap fv(P) = \emptyset$, there is only one label of
$T_{\phi_{fv}, c}(P)$ which inherits its first vertex from $P$,
$P_c^\rt$. By lemma \ref{fv-ofvcuts}, the generators
$T_{\phi_{fv},c}(P)$ and $T_{\phi_{\ofv},\sigma c}(\bP)$ are identical
after replacing $P_c^\rt$ with $\sigma P_c^\rt$. The dissection $c$
contains either an arrow ending on the root side, $\gamma_c(n+1)$, or
one ending on the first side $\gamma_c(1)$, but not both.  If
$P_c^\rt$ is a root (resp. leaf), label of $T_{fv, c}(P)$, $c$
contains $\gamma_c(n+1)$ (resp. $\gamma_c(1)$). Either arrow may be
trivial. Write $c' = c \setminus \{\gamma_c(n+1) , \gamma_c(1)\}$ and
\bas P_{c'}^\rt = 1 a_2 \ldots a_l n+1\eas the polygon associated to
the dissection $c'$ that inherits the first vertex of $P$. Write $a_1
= 1 $, $c_1 = c' \cup \gamma_c(a_1)$, $a_{l+1} = n+1$, $c_{l+1} = c'
\cup \gamma_c(a_{l+1})$. Define polygons $Q_i$ and $T_i$ such that
$Q_i = (P_{c'}^\rt)_{\gamma_c(a_i)}^l$, and $T_i =
(P_{c'}^\rt)_{\gamma_c(a_i)}^r$ for $i \in \{ 1 , l+1\}$. Write the
polygon \bas \Q_{l+1} = 1 a_2 \ldots a_{q} (n+1)\;.\eas for later use,
define a family of arrows \ba \gamma_c(a_m) = {_{a_{q}+1}\alpha_{a_m}}
\; . \label{gammadef} \ea By construction $P_c^\rt \in \{Q_i,
T_i\}$. If $\gamma(a_{l+1})$ is trivial, $a_{q} = n$ and  $P_c^\rt= Q_{l+1}$
decorates a root vertex of $T_{\phi_{fv},c}(P)$, and $T_{l+1}$ a
trivial polygon. If $\gamma(a_1)$ is trivial, $a_1 = 1$ and $P_c^\rt = T_1$ is
a leaf polygon of $T_{\phi_{fv},c}(P)$, and $Q_1$ a trivial polygon.

\bas
\xy 
\POS(0,0)  \ar@{-} +(10,0)_4
\POS(10,0) \ar@{-} +(7,7)_5
\POS(17,7) \ar@{-} +(0,10)_6
\POS(17,17) \ar@{-} +(-7,7)_7
\POS(10,24) \ar@{=} +(-10,0)_8
\POS(0,24) \ar@{-} +(-7,-7)_1 \POS(0,24)*+{\bullet}
\POS(-7,17) \ar@{-} +(0,-10)_2
\POS(-7,7) \ar@{-} +(7,-7)_3
\POS(-7,17) \ar@{->} +(5,-15)
\POS(10,0)  \ar@{->} +(-5,24)
\POS(17,17)  \ar@{->} +(-5,-15)
\POS(1,17) *{Q_6}
\POS(10,17) *{T_6}
\endxy  
\quad  Q_6=1348, \;T_6 = 578 \; \;, P_{c'}^\rt = 134578
\eas

Let $\{R_1, \ldots R_l, Q_i, T_i, L_1, \ldots L_n\}$ be the labels of
$T_{fv, c_i}(P)$ for $i \in \{1, l+1\}$, with the $R_k$ and $L_j$
corresponding to root and leaf labels respectively.  By lemma
\ref{fv-ofvcuts}, I write \begin{multline} \Delta_{c_1} +
  \Delta_{c_{l+1}}\left(\Lambda_{\phi_{fv}} (P) -
  \Lambda_{\phi_{\ofv}} (\bP)\right)= \\ \sha_k \Lambda_{\phi_4}
  (R_k) \sha \Lambda_{\phi_{fv}}(Q_1) \otimes(\Lambda_{\phi_{fv}} (T_1)
  -\Lambda_{\phi_{\ofv}} (\sigma T_1)) \sha_j \Lambda_{\phi_4} (L_j )
  \\ +  (\Lambda_{\phi_{fv}} (Q_{l+1}) -\Lambda_{\phi_{\ofv}} (\sigma
  Q_{l+1})) \sha_k \Lambda_{\phi_4} (R_k ) \otimes \sha_j
  \Lambda_{\phi_4} (L_j ) \sha
  \Lambda_{\phi_{\sigma fv}}(\sigma T_{l+1}) \label{ccoprod} \;.\end{multline}

Consider the case of admissible cuts such that $c \cap fv(P) = 1$.
Specifically, consider admissible dissections of $P$ in
$\Lambda_{fv}(P)$ of the form $c_m = c' \cup \alpha_{a_m} ,
\gamma_c(a_m)$ for $m \in \{2 \ldots l\}$ (where $\gamma_c(a_m)$
defined as in equation \eqref{gammadef}). For $d \supseteq
\{\alpha_{a_m} , \gamma_c(a_m)\}$, let the polygons $Q_m$, $T_m$ and
$S_m$ decorate the vertexes of $T_{\phi_{fv}, d}(P_{c'}^\rt)$ such
that $Q_m\prec T_m$ in $T_{\phi_{fv},d}(P)$ are adjascent to the arrow
$\alpha_{a_m}$ and $Q_m\prec S_m$, with both adjascent to the arrow
$\gamma_c(\alpha_m)$. The polygon $S_m$ decorates the remaining
vertex. If $m = q+1$ (or $q$), then $S_m$ is trivial, and $T_m =
T_{l+1}$ (or $Q_m = Q_1$).

\xy 
\hskip 80pt
\POS(0,0)  \ar@{-} +(10,0)_4
\POS(10,0) \ar@{-} +(7,7)_5
\POS(17,7) \ar@{-} +(0,10)_6
\POS(17,17) \ar@{-} +(-7,7)_7
\POS(10,24) \ar@{=} +(-10,0)_8
\POS(0,24) \ar@{-} +(-7,-7)_1 \POS(0,24)*+{\bullet}
\POS(-7,17) \ar@{-} +(0,-10)_2
\POS(-7,7) \ar@{-} +(7,-7)_3
\POS(-7,17) \ar@{->} +(5,-15)
\POS(10,0)  \ar@{->} +(-5,24)
\POS(17,17)  \ar@{->} +(-5,-15)

\POS(11,17) *{T_6}
\POS(1,17) *{Q_6}

\hskip 70pt  \POS(0,12) *{;}  \hskip 40pt

\POS(0,0)  \ar@{-} +(10,0)_4
\POS(10,0) \ar@{-} +(7,7)_5
\POS(17,7) \ar@{-} +(0,10)_6
\POS(17,17) \ar@{-} +(-7,7)_7
\POS(10,24) \ar@{=} +(-10,0)_8
\POS(0,24) \ar@{-} +(-7,-7)_1 \POS(0,24)*+{\bullet}
\POS(-7,17) \ar@{-} +(0,-10)_2
\POS(-7,7) \ar@{-} +(7,-7)_3
\POS(-7,17) \ar@{->} +(5,-15)
\POS(17,17)  \ar@{->} +(-5,-15)
\POS(0,24)  \ar@{->} +(5,-24)

\POS(9,15) *{T_3}
\POS(-2,13) *{Q_3}

\hskip 70pt  \POS(0,12) *{;}  \hskip 40pt

\endxy
\xy
\hskip 80pt
\POS(0,0)  \ar@{-} +(10,0)_4
\POS(10,0) \ar@{-} +(7,7)_5
\POS(17,7) \ar@{-} +(0,10)_6
\POS(17,17) \ar@{-} +(-7,7)_7
\POS(10,24) \ar@{=} +(-10,0)_8
\POS(0,24) \ar@{-} +(-7,-7)_1 \POS(0,24)*+{\bullet}
\POS(-7,17) \ar@{-} +(0,-10)_2
\POS(-7,7) \ar@{-} +(7,-7)_3
\POS(-7,17) \ar@{->} +(5,-15)
\POS(10,0)  \ar@{->} +(6,18)
\POS(17,17)  \ar@{->} +(-5,-15)
\POS(0,24)  \ar@{->} +(16,-6)

\POS(10,21.5) *{T_5}
\POS(1,14) *{Q_5}

\hskip 90pt

\POS(11,18) *{Q_3=134 \quad T_3=4578 = T} \POS(15,14) *{Q_5=1347 \quad
  T_5=78 \quad S_5 = 57} \endxy
Write \begin{multline} \sum_{m =
    2}^{l} \Delta_{c_m} \left(\Lambda_{\phi_{fv}} (P)) -
  \Lambda_{\phi_{\ofv}} (\sigma P)\right)= - \sum_{m = 2}^{l}
  \\ \sha_k \Lambda_{\phi_4} (R_k ) \sha
  \Lambda_{\phi_{fv}}(Q_m) \otimes (\Lambda_{\phi_{fv}} (T_m)
  -\Lambda_{\phi_{\ofv}} (\sigma T_m)) \sha_j \Lambda_{\phi_4} (L_j )
  \\ + \sha_k \Lambda_{\phi_4} (R_k ) \sha ( \Lambda_{\phi_{fv}}(Q_m)
  - \Lambda_{\phi_{\ofv}}(\sigma Q_m)) \otimes \Lambda_{\phi_{\ofv}}
  (\sigma T_m) \sha_j \Lambda_{\phi_4} (L_j ) \label{cmcoprod}\end{multline}
where the negative sign comes from $\sgn_{fv}(c_m)$ and the fact that
$c_m \cap fv(P) = \alpha_m$. In this expression, I have included the
term $\Lambda_{\phi_4}(S_m)$ in the set $\{\Lambda_{\phi_4}(R_k)\}$ if
$m \leq q$ and in the set $\{\Lambda_{\phi_4}(L_k)\}$ if $m > q$.

The set of non-trivial admissible dissections of $P$ in
$\phi_{fv}$ can be partitioned into sets of the form
$\{c_i\}_{i=1}^{l+1}$. For the remainder of the proof, I calculate the
contribution to the coproduct from $\sum_{m =
  1}^{l+1}\Delta_{c_i}$. The result can be derived by summing over all
such subsets.

From expression \eqref{mixed} of
lemma \ref{phi4-fv}, \bas \Lambda_{\phi_4}(T_{l+1}) -
\Lambda_{\phi_{\ofv}}(T_{l+1}) =
\sum_{m=q+2}^{l}\Lambda_{\phi_{\ofv}}(T_m)\sha \Lambda_{\phi_4}(S_m)
\;.\eas Since $T_{q+1} = T_{l+1}$, and $S_{q+1}$ is trivial, \ba
\Lambda_{\phi_4}(T_{l+1}) = \sum_{m=q+1}^{l}\Lambda_{\phi_{\ofv}}(T_m)\sha
\Lambda_{\phi_4}(S_m)\;.\label{subT}\ea Similarly, \ba
\Lambda_{\phi_4}(Q_1) = \sum_{m=2}^{q}\Lambda_{\phi_{fv}}(Q_m)\sha
\Lambda_{\phi_4}(S_m)\;. \label{subQ}\ea Inserting equations \eqref{subT} and \eqref{subQ} into
\eqref{ccoprod} gives \begin{multline} (\Delta_{c_1} + \Delta_{c_{l
      +1}})(\Lambda_{\phi_{fv}}(P) - \Lambda_{\phi_{\ofv}}(\bP)) =
  \\ \sha_k \Lambda_{\phi_4} (R_k )\sha \left(\Lambda_{\phi_{fv}} (Q_{l+1})
  - \Lambda_{\phi_{\ofv}} (\sigma Q_{l+1}) \right) \otimes \\ \sum_{m=q+1}^{l}
  [\Lambda_{\phi_{\ofv}} (\sigma T_m) \sha \Lambda_{\phi_4} (S_m)]\sha_j
  \Lambda_{\phi_4} (L_j )  \\+ \sha_k \Lambda_{\phi_4} (R_k )\sha
    \sum_{m=2}^{q} [\Lambda_{\phi_{fv}} (Q_m) \sha \Lambda_{\phi_4}
    (S_m)] \otimes  \\ \left(\Lambda_{\phi_{fv}} (T_1) -
  \Lambda_{\phi_{\ofv}} (\sigma T_1) \right)  \sha_j \Lambda_{\phi_4}
  (L_j ) \label{shufflesubbed}\end{multline}

Combining \eqref{shufflesubbed} with \eqref{cmcoprod} gives an expression
for $\sum_{m=1}^{l+1}\Delta_{c_m} (\Lambda_{\phi_{fv}}(P) -
\Lambda_{\phi_{\ofv}}(\bP)) = $ \begin{multline} \sum_{m=q+1}^{l}
  \left(\Lambda_{\phi_{fv}} (Q_{l+1}) - \Lambda_{\phi_{\ofv}} (\sigma
  Q_{l+1}) - \Lambda_{\phi_{fv}}(Q_m) + \Lambda_{\phi_{\ofv}}(\sigma
  Q_m) \right) \\ \sha_k \Lambda_{\phi_4} (R_k ) \otimes
  [\Lambda_{\phi_{\ofv}} (T_m) \sha \Lambda_{\phi_4} (S_m)]\sha_j
  \Lambda_{\phi_4} (L_j ) \\ - \sum_{m=2}^{q} \sha_i \Lambda_{\phi_4}
  (R_i ) \sha ( \Lambda_{\phi_{fv}}(Q_m) -
  \Lambda_{\phi_{\ofv}}(\sigma Q_m)) \sha \Lambda_{\phi_4}(S_m)
  \\ \otimes \Lambda_{\phi_{\ofv}} (T_m) \sha_j \Lambda_{\phi_4} (L_j
  )) \label{rootterms}\end{multline}
\begin{multline} 
+ \sum_{m=2}^{q} \sha_k \Lambda_{\phi_4} (R_k )\sha
    [\Lambda_{\phi_{fv}} (Q_m) \sha
    \Lambda_{\phi_4} (S_m)] \otimes \\ \left(\Lambda_{\phi_{fv}} (T_1) - \Lambda_{\phi_{\ofv}} (\sigma T_1) - \Lambda_{\phi_{fv}} (T_m) + \Lambda_{\phi_{\ofv}} (\sigma T_m)
  \right) \sha_j \Lambda_{\phi_4} (L_j
  )  \\- \sum_{m=q+1}^{l}  \sha_i \Lambda_{\phi_4} (R_i )
  \sha \Lambda_{\phi_{fv}}(Q_m)  \\ \otimes  \Lambda_{\phi_4}(S_m) \sha
  (\Lambda_{\phi_{fv}} (T_m) -\Lambda_{\phi_{\ofv}} (\sigma T_m))
  \sha_j \Lambda_{\phi_4} (L_j )
\label{leafterms}
\end{multline}

Consider the sum in \eqref{rootterms}. The following arguments are similar for
the terms in \ref{leafterms}.  By induction,
\begin{multline} \tilde r_n\sum_{m=q+1}^{l} \left(\Lambda_{\phi_{fv}} (Q_{l+1}) -
\Lambda_{\phi_{\ofv}} (\sigma Q_{l+1}) - \Lambda_{\phi_{fv}} (Q_m) +
\Lambda_{\phi_{\ofv}} (\sigma Q_m)\right) = \\ \tilde
r_n\sum_{m=q+1}^{l}  \sum_{i=2}^{q} a_i(n+1)-(n+1)a_i \star_{\prec \{\alpha_{i-1}, A_Q\}}(\Lambda_{\phi_4} (A_{Q_m,\alpha_{i-1}}^r)
\sha \Lambda_{\phi_{fv}} (A_{Q_m,\alpha_{i-1}}^l)) -
\\  a_ia_m-a_ma_i \star_{\prec \{\alpha_{i-1},
  A_{Q_m}\}}(\Lambda_{\phi_4}
(A_{Q_m,\alpha_{i-1}}^r)\sha\Lambda_{\phi_{fv}}
(A_{Q_m,\alpha_{i-1}}^l)) \;. \label{Q-Qm}\end{multline} Since
\bas B_Q = B_{Q_m}= a_2 \ldots a_q 1\eas for all $j$, all terms
involving $a_i1-a_i1$ cancel. Similarly \begin{multline} \tilde
  r_n \left(\sum_{m=2}^{q} \left(\Lambda_{\phi_{fv}} (Q_m) -
  \Lambda_{\phi_{\ofv}} (\sigma Q_m))\right) \right)= \\ \tilde r_n
  \left( \sum_{m=2}^{q} \sum_{i=2}^{m-1} a_ia_m-a_ma_i \star_{\prec
    \{\alpha_{i-1}, A_{Q_m}\}}(\Lambda_{\phi_4}
  (A_{Q_m,\alpha_{i-1}}^r)\sha \Lambda_{\phi_{fv}}
  (A_{Q_m,\alpha_{i-1}}^l)) - \right. \\  \left.  a_i1-1a_i \star_{
    \prec \{\sigma \alpha_{i}, B_{Q_m}\}}(\Lambda_{\phi_4}
  (B_{Q_m,\sigma \alpha_i}^l) \sha \Lambda_{\phi_{\ofv}} (B_{Q_m,
    \sigma \alpha_i}^r))  \;. \right) \label{Qm}\end{multline}

Use the coideal $\tilde J_1$ to rewrite \bas a_ia_m-a_ma_i =
a_i(n+1)-(n+1)a_i - a_m(n+1)+(n+1)a_m \;.\eas Inserting this into the
expressions \eqref{Q-Qm} and \eqref{Qm} and substituting into
\eqref{rootterms} gives an expression for $\sum_{m=2}^{l+1} \Delta
_{c_m} \Lambda_{\phi_{fv}}(P) - \Lambda_{\ofv}(\bP)$ \begin{multline}
 \sum_{ i =2}^{q} a_i(n+1)-(n+1)a_i \star_{\prec
    \{\alpha_{i-1}, A_{Q_{l+1}}\}}    (\Lambda_{\phi_4}
  (A_{Q_{l+1},\alpha_{i-1}}^r)\sha\Lambda_{\phi_{fv}}
  (A_{Q_{l+1},\alpha_{i-1}}^l)) \\  \sha_k(\Lambda_{\phi_4}R_k)  \otimes
  \Lambda_{\phi_4}(T_{l+1}) \sha_j (\Lambda_{\phi_4}L_j)
   \label{part1} \end{multline}
\begin{multline} + \sum_{m=2}^{q} \sum_{i=2}^{m-1} 
a_i1-1a_i \star_{\succ 
  \{\alpha_i,B_{Q_{m}}\}}(\Lambda_{\phi_4} (B_{Q_{m}, \alpha_i}^l) \sha \Lambda_{\phi_{\ofv}} (B_{Q_{m}, \alpha_i}^r))
\\ \sha_k(\Lambda_{\phi_4}R_k) \otimes  \Lambda_{\phi_{\ofv}} (T_m)
 \sha_j \Lambda_{\phi_4} (L_j ))
 \label{part2} \end{multline} \begin {multline} - \sum_{m= 2}^{l}
  \sum_{i = 2}^{\textrm{min}(m-1,q)} a_i(n+1)-(n+1)a_i
  \star_{\prec \{\alpha_{i-1},
    A_{Q_m}\}}(\Lambda_{\phi_4}
  (A_{Q_m,\alpha_{i-1}}^r)\sha\Lambda_{\phi_{fv}}
  (A_{Q_m,\alpha_{i-1}}^l)) \\\sha_i(\Lambda_{\phi_4}R_i) \otimes
  \Lambda_{\phi_{\ofv}} (T_m) \sha_j
  \Lambda_{\phi_4} (L_j )
  \label{part3} \end{multline} \begin{multline} + \sum_{m= 2}^{l}
  a_m(n+1)-(n+1)a_m
  \star_{\prec A_{Q_m}^\rt} (\Lambda_{\phi_4}
  (A_{Q_m})) \\\sha_i(\Lambda_{\phi_4}R_i) \otimes
  \Lambda_{\phi_{\ofv}} (T_m)  \sha_j
  \Lambda_{\phi_4} (L_j )) \;. \label{part4} \end{multline}

The expression for \eqref{part1} is derived with the aid of equation
\eqref{subT}. Line \eqref{part2} is derived from line \eqref{Qm}. In
lines \eqref{part2}, \eqref{part3} and \eqref{part4} the $S_m$ terms
  are absorbed in to the set of $R_i$ (if $m \leq q$) or $L_i$ (if $m >
  q$). Finally, line \eqref{part4} is derived from statement
  \eqref{mixed} of lemma \ref{phi4-fv}, namely that \bas
  \Lambda_{\phi_4}(A_{Q_m}) = \sum_{i=2}^{\textrm{weight }
    Q_m}(\Lambda_{\phi_4}
  (A_{Q_m,\alpha_{i-1}}^r)\sha\Lambda_{\phi_{fv}}
  (A_{Q_m,\alpha_{i-1}}^l))\;. \eas

I use this lines \eqref{part1}, \eqref{part2}, \eqref{part3} and
\eqref{part4} to compare \eqref{rootterms} to the coproduct of the
terms involving $i(n+1)-(n+1)i$ in equation \eqref{fv-ofv}. Line
\eqref{part2} I ignore as it contributes to the coproduct of terms
involving $a_i1-1a_i$. Instead, I consider \begin{multline} -
  \sum_{m=q+1}^{l} \sum_{i=m+1}^{l} \sha_i(\Lambda_{\phi_4}R_i) \sha
  \Lambda_{\phi_{\ofv}} (Q_m) \otimes \sha_j \Lambda_{\phi_4} (L_j ))
  \sha \\ a_i(n+1)-a_i(n+1)\star_{\prec
    \{\alpha_{i-1},A_{T_m}\}}(\Lambda_{\phi_4} (A_{T_m,
    \alpha_{i-1}}^l)\sha \Lambda_{\phi_{fv}} (A_{T_m,
    \alpha_{i-1}}^r))
   \label{part2'} \end{multline} which comes from expression
\eqref{leafterms}.

Notice that $\{c_m|1 \leq m \leq l +1\}$ are admissible dissections
of $P$ in $\phi_{fv}$. They need not be admissible dissections of $A_P$
or $B_P$ in either $\phi_4$ of $\phi_{fv}$. However, each can be
partitioned into admissible dissections of the subpolygons $A_{P,
  \alpha_{i-1}}^l$ and $A_{P, \alpha_{i-1}}^r$. Line \eqref{part1}
corresponds to the admissible cut $c_{l+1}$. This can
be partitioned $c_{l+1} = c_{A_{P, \alpha_{a_i-1}}^r} \cup c_{A_{P,
    \alpha_{a_i-1}}^l}$ for $2\leq i \leq  q$, where $c_{A_{P,
    \alpha_{a_i-1}}^r}$ is an admissible dissection of $A_{P,
  \alpha_{a_i-1}}^l$ in $\Lambda_{\phi_{fv}}$ and $c_{A_{P,
    \alpha_{a_i-1}}^r}$ is an admissible dissection of $A_{P,
  \alpha_{i-1}}^r$ in $\Lambda_{\phi_4}$. It cannot be partitioned
admissible dissections of $B_{P, \alpha_{a_i}}^r$ and $B_{P, \alpha_{a_i}}^l$
for any $i$, as the arrow ${_{\alpha_q+1}}\alpha \in c$ is not in
$D(B_P)$.  For instance, for $i=3$, 
\bas \xy \POS(-17,12) *{A_P=} 
\POS(0,0)  \ar@{-} +(10,0)_4
\POS(10,0) \ar@{-} +(7,7)_5
\POS(17,7) \ar@{-} +(0,10)_6
\POS(17,17) \ar@{-} +(-7,7)_7
\POS(10,24) \ar@{=} +(-17,0)_8
\POS(-7,24)*+{\bullet}
\POS(-7,24) \ar@{-} +(0,-17)_2
\POS(-7,7) \ar@{-} +(7,-7)_3
\POS(-7,24) \ar@{->} +(5,-20)
\POS(-7,24) \ar@{.>} +(12,-24)
\POS(10,0)  \ar@{->} +(-5,24)
\POS(17,17)  \ar@{->} +(-5,-15)
\POS(11,17) *{T_6}
\POS(1,17) *{A_{Q_6}}  
\POS(40 , 12) *{i= 3, \; m= l+1}\endxy 
\;.\eas
The terms in \eqref{part1} correspond to terms in \eqref{cterms} with
$c_r = c_{A_{P, \alpha_{a_i-1}}^r}$ and $c_l = c_{A_{P,
    \alpha_{a_i-1}}^l}$.

Consider the admissible dissections contributing to
\eqref{part2'}. Here $i > m> q$, therefore $\alpha_{a_m} \in D(B_{P,
  \alpha_{a_i}}^r)$.  \bas \xy \POS(-17,12) *{A_P=} \POS(0,0) \ar@{-}
+(10,0)_4 \POS(10,0) \ar@{-} +(7,7)_5 \POS(17,7) \ar@{-} +(0,10)_6
\POS(17,17) \ar@{-} +(-7,7)_7 \POS(10,24) \ar@{=} +(-17,0)_8
\POS(-7,24)*+{\bullet} \POS(-7,24) \ar@{-} +(0,-17)_2 \POS(-7,7)
\ar@{-} +(7,-7)_3 \POS(-7,24) \ar@{->} +(5,-22) \POS(-7,24) \ar@{.>}
+(22,-5) \POS(-7,24) \ar@{->} +(19,-21) \POS(17,17) \ar@{->} +(-4,-14)
\POS(8,17) *{T_4} \POS(0,9) *{A_{Q_4}} \POS(40 , 12) *{i= 5, \; m=
  4}\endxy .  \eas The terms of \eqref{part2'} correspond to terms in
\eqref{Tterms}, with $c_r = c_{A_{P, \alpha_{a_i}}^r}$ and $c_l =
c_{A_{P, \alpha_i}^l}$; $ c_m = c_{A_{P, \alpha_{a_i}}^r} \cup c_{A_{P,
    \alpha_i}^l}$.

For $2 \leq m \leq l$, and $2 < i < \textrm{wt }Q_m$,
\bas 
\xy \POS(-17,12) *{A_P=}
\POS(0,0)  \ar@{-} +(10,0)_4
\POS(10,0) \ar@{-} +(7,7)_5
\POS(17,7) \ar@{-} +(0,10)_6
\POS(17,17) \ar@{-} +(-7,7)_7
\POS(10,24) \ar@{=} +(-17,0)_8
\POS(-7,24)*+{\bullet}
\POS(-7,24) \ar@{-} +(0,-17)_2
\POS(-7,7) \ar@{-} +(7,-7)_3
\POS(-7,24) \ar@{->} +(5,-20)
\POS(-7,24) \ar@{.>} +(12,-24)
\POS(10,0)  \ar@{->} +(6,18)
\POS(17,17)  \ar@{->} +(-5,-15)
\POS(-7,24)  \ar@{->} +(23,-6)
\POS(10,21.5) *{T_5}
\POS(1,14) *{A_{Q_5}}
\POS(40 , 12) *{i= 3, \; m= 5}\endxy 
\eas
the terms of \eqref{part3} corresponds to terms of \eqref{cterms},
with $c_r = c_{A_{P, \alpha_{i-1}}^r}$ and $c_l = c_{A_{P,
    \alpha_{i-1}}^l} \cup \alpha_{a_m}$. For $i = 2$ the subpolygon
$(A_{Q_m})_{\alpha_{a_2-1}}^r$ is trivial. The admissible dissection
can be written $c_m = c_{A_{P, \alpha_{i-1}}^r} \cup c_{A_{P,
    \alpha_{i-1}}^l} \cup \alpha_{a_2}$. In this case, the terms of
\eqref{part3} corresponds to terms of \eqref{cmterms1}, with $c_r =
c_{A_{P, \alpha_{a_i-1}}^r}$ and $c_l = c_{A_{P, \alpha_{a_i-1}}^l}$.

Finally, the terms of \eqref{part4} corresponds to terms of
\eqref{cmterms1}, with $c_r = c_{A_{P, \alpha_{a_m-1}}^r}$ and $c_l =
c_{A_{P, \alpha_{a_m-1}}^l}$. 
\bas 
\xy
\POS(0,0)  \ar@{-} +(10,0)_4
\POS(10,0) \ar@{-} +(7,7)_5
\POS(17,7) \ar@{-} +(0,10)_6
\POS(17,17) \ar@{-} +(-7,7)_7
\POS(10,24) \ar@{=} +(-17,0)_8
\POS(-7,24)*+{\bullet}
\POS(-7,24) \ar@{-} +(0,-17)_2
\POS(-7,7) \ar@{-} +(7,-7)_3
\POS(-7,24) \ar@{->} +(5,-20)
\POS(10,0)  \ar@{->} +(6,18)
\POS(17,17)  \ar@{->} +(-5,-15)
\POS(-7,24)  \ar@{->} +(23,-6)
\POS(10,21.5) *{T_5}
\POS(1,14) *{A_{Q_5}}
\POS(40 , 12) *{ m= 5}\endxy 
 \eas

Varying the dissections $c$ and associated $c_m$ account for all terms in the
expressions \eqref{cterms}, \eqref{Tterms}, \eqref{cmterms1}, and
\eqref{cmterms2}, showing that the coproduct of the two sides of
\eqref{fv-ofv} are equal.
\end{proof}
\bibliography{/home/mithu/bibliography/Bibliography}{}

\bibliographystyle{amsplain}

\end{document}